\newtheorem{Cor}{Corollary}
\newtheorem{Prob} {Problem}
\newtheorem{theo+}           {Theorem}
\newtheorem{prop+}           {Proposition}
\newtheorem{coro+}           {Corollary}
\newtheorem{lemm+}           {Lemma}
\newtheorem{conjecture}      {Conjecture}
\theoremstyle{definition}
\newtheorem{defi+}           {Definition}
\theoremstyle{remark}
\newtheorem{rema+}           {Remark}
\newenvironment{theorem}{\begin{theo+}}{\end{theo+}}
\newenvironment{proposition}{\begin{prop+}}{\end{prop+}}
\newenvironment{lemma}{\begin{lemm+}}{\end{lemm+}}
\newenvironment{definition}{\begin{defi+}}{\end{defi+}}
\newcommand{\al}{\alpha}
\newcommand{\Ups}{\Upsilon}
\newcommand {\CC}{\mathcal C}
\newcommand {\bCP} {\mathbb {CP}}
\newcommand {\eps} {\epsilon}
\newcommand \dq {\mathfrak d(z)}
\newcommand \dd {\mathfrak d}
\newcommand{\la}{\lambda}
\newcommand{\Si}{\Sigma}
\newcommand{\si}{\sigma}
\newcommand{\bC}{\mathbb C}
\newcommand{\bN}{\mathbb N}
\newcommand{\bZ}{\mathbb Z}
\newcommand{\C}{\mathcal C}
\def\newop#1{\expandafter\def\csname #1\endcsname{\mathop{\rm
#1}\nolimits}}
\begin{document}
          \numberwithin{equation}{section}

          \title[On  higher Heine-Stieltjes  polynomials]
          {On  higher Heine-Stieltjes  polynomials}

\author[T.~Holst]{Thomas Holst}
\address{Department of Mathematics, Stockholm University, SE-106 91
Stockholm, Sweden} \email{holst@math.su.se}

\author[B.~Shapiro]{Boris Shapiro}
\address{Department of Mathematics, Stockholm University, SE-106 91
Stockholm,
         Sweden}
\email{shapiro@math.su.se}

\date{\today}
\keywords{Lam\'e equation, Van Vleck and Heine-Stieltjes
polynomials, asymptotic root-counting measure} \subjclass{30C15;
31A15, 34E99}

\begin{abstract}

Take a linear ordinary differential operator $\dq=\sum_{i=1}^k
Q_i(z)\frac{d^i}{dz^i}$ with polynomial coefficients  and set
$r=max_{i=1,\ldots,k} (\deg Q_{i}(z)-i)$.  If $\dq$ satisfies the
conditions:\; i) $r\ge 0$\; and\; ii) $\deg Q_{k}(z)=k+r$\; we call
it a {\it non-degenerate higher Lam\'e operator}. Following the
classical examples of E.~Heine and T.~Stieltjes we initiated in
\cite {BShN}  the study of  the following multiparameter spectral
problem: for each positive integer $n$ find polynomials $V(z)$ of
degree at most $r$ such that  the equation:
$$
\dq S(z)+V(z)S(z)=0
$$
has a  polynomial solution $S(z)$ of degree $n$.  We have shown that
under some mild non-degeneracy assumptions on $T$
 there exist  exactly ${n+r \choose n}$ spectral polynomials $V_{n,i}(z)$ of degree $r$ and   their  corresponding   eigenpolynomials  $S_{n,i}(z)$ of degree $n$.   Localization results of \cite {BShN}  provide the existence of abundance of converging as $n\to \infty$  sequences  of normalized spectral polynomials $\{\widetilde V_{n,i_n}(z)\}$  where $\widetilde V_{n,i_n}(z)$ is the monic polynomial proportional  to $V_{n,i_n}(z)$.  Below we calculate   for any  such converging sequence $\{\widetilde V_{n,i_{n}}(z)\}$    the   asymptotic root-counting measure of the corresponding  family $\{S_{n,i_{n}}(z)\}$ of eigenpolynomials. We also conjecture  that the sequence of sets of all normalized spectral polynomials $\{\widetilde V_{n,i}(z)\}$  having   eigenpolynomials  $S(z)$ of degree $n$  converges  as $n\to \infty$ to the standard measure in the space of monic polynomials of degree $r$ which depends only on the leading coefficient $Q_k(z)$.

\end{abstract}

\maketitle


\section  {Introduction}

A {\it generalized Lam\'e equation}, see e.g. \cite {WW} is the
second order differential equation given by:
\begin{equation}
        Q_{2}(z)\frac
        {d^2S}{dz^2}+Q_{1}(z)\frac{dS}{dz}+V(z)S=0,
        \label{eq:genLame}
        \end{equation}
where $Q_{2}(z)$ is a complex polynomial of degree $l$ and
$Q_{1}(z)$ is a complex polynom of degree at most $l-1$.  If we fix
the polynomials $Q_2(z)$ and $Q_1(z)$ then the classical
Heine-Stieltjes spectral problem, \cite {He}, \cite{St} asks to
determine for any given positive integer $n$  all possible
polynomials $V(z)$ such that (\ref{eq:genLame}) has a polynomial
solution $S(z)$ of degree $n$. Such $V(z)$ are referred to as {\em
Van Vleck polynomials} and their corresponding polynomials $S(z)$
are called {\em Stieltjes} or {\em Heine-Stieltjes polynomials}.

The next  fundamental proposition announced   in \cite {He}  was the
starting point of the Heine-Stieltjes theory. (Notice that
throughout this paper we count polynomials $V(z)$ individually and
polynomials $S(z)$  {\em projectively}, i.e. up to a non-vanishing
constant factor.)

\begin{theorem}[Heine]\label{th:Heine}
       If the polynomials  $Q_{2}(z)$ and $Q_{1}(z)$ are  algebraically independent, i.e. their coefficients  do not satisfy an algebraic  equation with integer coefficients  then for any    integer $n> 0 $  there exists exactly ${n+l-2\choose n}$ polynomials $V(z)$ of degree exactly  $(l-2)$ such that the equation
(\ref{eq:genLame}) has and unique (up to a non-vanishing constant
factor) polynomial solution $S(z)$ of degree exactly $n$.
       \end{theorem}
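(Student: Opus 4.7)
The plan is to recast Heine's theorem as a kernel-existence problem for a family of linear maps and then count the critical parameters by a B\'ezout-type calculation, verifying the count against a classical specialization. For each polynomial $V$ of degree at most $l-2$, consider the linear map $L_V : \mathbb{C}[z]_{\le n} \to \mathbb{C}[z]_{\le n+l-2}$ given by $L_V(S) = Q_2 S'' + Q_1 S' + V S$; a polynomial eigenfunction $S$ of degree $n$ in (\ref{eq:genLame}) is precisely a nonzero element of $\ker L_V$ of degree $n$. Writing $S = z^n + a_{n-1}z^{n-1} + \cdots + a_0$ monic and $V = v_{l-2}z^{l-2} + \cdots + v_0$ and equating coefficients of $z^0, \ldots, z^{n+l-2}$ in (\ref{eq:genLame}) produces a square bilinear algebraic system: $n+l-1$ equations, each linear in the $a$'s and linear in the $v$'s, in the $n+l-1$ unknowns $(a_0, \ldots, a_{n-1}, v_0, \ldots, v_{l-2})$.

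For the count itself I would projectivize, viewing $S \in \mathbb{P}^n$ and $V \in \mathbb{P}^{l-2}$. The top-degree coefficient equation determines $v_{l-2}$ as an explicit affine function of the leading coefficients of $Q_2, Q_1$, and eliminating it leaves $n+l-2$ genuinely bilinear hypersurfaces on $\mathbb{P}^n \times \mathbb{P}^{l-2}$. By the multi-projective B\'ezout theorem, their intersection number equals the coefficient of $h_1^{\,n} h_2^{\,l-2}$ in $(h_1+h_2)^{n+l-2}$, namely $\binom{n+l-2}{n}$, giving the upper bound. To show the bound is attained by that many geometrically distinct Van Vleck polynomials, I would specialize to Stieltjes' electrostatic case: taking $Q_2(z) = \prod_{j=1}^l (z - a_j)$ with real $a_1 < \cdots < a_l$ and $Q_1/Q_2 = \sum_j r_j/(z - a_j)$ with $r_j > 0$, Stieltjes' classical theorem produces exactly one polynomial solution of degree $n$ for every way of distributing $n$ indistinguishable charges among the $l-1$ open intervals $(a_j, a_{j+1})$, yielding $\binom{n+l-2}{l-2} = \binom{n+l-2}{n}$ distinct Van Vleck polynomials. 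Conservation of number for zero-dimensional complete intersections then propagates this count to the generic case.

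The principal obstacle is controlling degenerations on the boundary of the compactification and ruling out coincidences. Concretely, for coefficients satisfying the algebraic-independence hypothesis one must check: (i) no intersection point lies on the divisor ``$\deg V < l-2$'' or the divisor ``$\deg S < n$'' of $\mathbb{P}^n \times \mathbb{P}^{l-2}$, (ii) no two expected Van Vleck polynomials coincide, and (iii) every local intersection multiplicity is one. Each such degeneracy is cut out by a nonzero polynomial with integer coefficients in the coefficients of $(Q_2, Q_1)$, and algebraic independence rules all of them out simultaneously. Once these exclusions are in place, the B\'ezout count of $\binom{n+l-2}{n}$ is realized by that many genuinely distinct Van Vleck polynomials, each of the exact degree $l-2$, with a unique corresponding Stieltjes polynomial of exact degree $n$.
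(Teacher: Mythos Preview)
The paper does not prove Theorem~\ref{th:Heine}: it is quoted as Heine's classical 1878 result, cited from \cite{He}, and serves only as historical motivation for the higher-order problem that is the paper's actual subject. There is therefore no proof in the paper to compare your attempt against.

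Evaluated on its own, your strategy is the standard one and is essentially correct: recast the problem as a polynomial system in the coefficients, count solutions by a multiprojective B\'ezout argument, verify that the count is attained in Stieltjes' real electrostatic specialization, and use algebraic independence of the data to exclude degenerations and multiplicities for generic $(Q_2,Q_1)$. One technical point needs tightening. Your assertion that, after eliminating $v_{l-2}$, the remaining $n+l-2$ coefficient equations are ``genuinely bilinear hypersurfaces on $\mathbb{P}^n\times\mathbb{P}^{l-2}$'' is not literally true as written: the contribution $Q_2S''+Q_1S'$ is linear in the $a_i$ but does not involve the $v_j$ at all, so the equations are homogeneous in $S$ but only affine in $V$, and hence do not define hypersurfaces on that product of projective spaces. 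The clean repair is to homogenize $V$ by an extra coordinate $v_{-1}$, working on $\mathbb{P}^n\times\mathbb{P}^{l-1}$, where all $n+l-1$ coefficient equations become genuinely bihomogeneous of bidegree $(1,1)$. The top equation then factors as $a_n\,(c\,v_{-1}+v_{l-2})=0$ for a constant $c$ depending only on $n$ and the leading coefficients of $Q_2,Q_1$; discarding the unwanted component $a_n=0$ and restricting to the hyperplane $c\,v_{-1}+v_{l-2}=0$ (which is a copy of $\mathbb{P}^{l-2}$) leaves $n+l-2$ bidegree-$(1,1)$ equations on $\mathbb{P}^n\times\mathbb{P}^{l-2}$, and now your B\'ezout computation giving $\binom{n+l-2}{n}$ is valid. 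With that adjustment, and with the Stieltjes specialization exhibiting a parameter value at which all $\binom{n+l-2}{n}$ solutions are reduced and distinct, the conservation-of-number step completes the argument.
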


   In \cite{BShN} we generalize problem (\ref{eq:genLame}) to high order operators as follows.
    Consider  an arbitrary  linear ordinary differential operator
     \begin{equation}
     \dq=\sum_{i=1}^k
Q_i(z)\frac{d^i}{dz^i}, \label{eq:BasicOp}
\end{equation}
    with polynomial coefficients.
    The number
$r=max_{i=1,\ldots,k} (\deg Q_{i}(z)-i)$ is  called the {\it Fuchs
index} of $\dq$.   The operator $\dq$ is called a {\it  higher
Lam\'e operator} if its Fuchs index $r$ is nonnegative. In the case
of the vanishing Fuchs index  $\dq$ is usually called {\em exactly
solvable} in the physics literature. The operator $\dq$ is called
{\em non-degenerate}Ê if $\deg Q_{k}(z)=k+r$.

     \medskip
     Given a higher Lam\'e operator $\dq$   consider the  problem of finding
for each positive integer $n$   all   polynomials $V(z)$ of degree
at most $r$ such that  the equation
\begin{equation}
\dq S(z)+V(z)S(z)=0 \label{eq:1}
\end{equation}
has a  polynomial solution $S(z)$ of degree $n$.

\medskip
    Following the classical terminology we call (\ref{eq:1})  {\it (higher) Heine-Stieltjes spectral problem}, polynomial $V(z)$ a {\it   (higher) Van Vleck polynomial}, and the corresponding polynomial $S(z)$ a {\it (higher) Stieltjes polynomial}.
 To move further we need to formulate two main results of \cite {BShN}, see Corollary 1 and Theorem 9 there.

\begin{proposition}\label{pr:exist}
For any non-degenerate higher Lam\'e operator $\dq$ there exist and
finitely many (up to a scalar multiple) Stieltjes polynomials $S(z)$
of any sufficiently large degree. Their corresponding Van Vleck
polynomials $V(z)$ all have degree exactly $r$.
\end{proposition}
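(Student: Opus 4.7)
The plan is to separate the two assertions. First I would pin down the leading coefficient of $V$ by matching top-degree terms in $\dq S + VS = 0$, and then establish finiteness by a parameter count on the incidence variety of admissible pairs $(V,[S])$. For the first, expand $Q_i(z) = q_{i,r+i}z^{r+i} + \ldots$ (so that $q_{k,r+k}\neq 0$ by non-degeneracy) and $V(z) = v_r z^r + \ldots$, and let $s_n$ denote the leading coefficient of $S$. The coefficient of $z^{n+r}$ in $\dq S + VS$ is
\[
s_n\Bigl(v_r + \sum_{i=1}^k q_{i,r+i}\,\tfrac{n!}{(n-i)!}\Bigr),
\]
and vanishing with $s_n\neq 0$ forces $v_r = -\sum_i q_{i,r+i}\,\tfrac{n!}{(n-i)!}$. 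The $i=k$ summand grows like $q_{k,r+k}n^k$ and dominates for large $n$, so this value is nonzero for all sufficiently large $n$, giving $\deg V = r$ exactly.

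For finiteness, fix $v_r$ as above and regard $L_V := \dq + V\cdot$ as a linear map $\mathcal P_n \to \mathcal P_{n+r-1}$, where $\mathcal P_m$ denotes polynomials of degree at most $m$; it depends linearly on the remaining parameters $\bar V = (v_0,\ldots,v_{r-1})\in\bC^r$. A higher Stieltjes polynomial of degree $n$ corresponds projectively to a point of the incidence variety
\[
\Sigma_n = \{(\bar V,[S])\in\bC^r\times\mathbb P(\mathcal P_n) : L_V S = 0\}.
\]
The ambient space has dimension $r + n$, and $L_V S = 0$ imposes $n+r$ scalar conditions (one per coefficient in $\mathcal P_{n+r-1}$), so one expects $\dim\Sigma_n = 0$. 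To conclude, I would either apply a Bezout/top-Chern class computation on $\bC^r\times\mathbb P(\mathcal P_n)$ (which simultaneously yields non-emptiness and the exact count $\binom{n+r}{n}$) or argue that for generic $\bar V$ the kernel of $L_V$ is one-dimensional and invoke semicontinuity.

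The delicate point is ruling out positive-dimensional components of $\Sigma_n$, and non-degeneracy is essential here. The hypothesis $\deg Q_k = k+r$ ensures that the top-indexed equations of $L_V S = 0$ involve the highest coefficients of $S$ with nonzero factor (extending the leading-term computation above to $z^{n+r-1}, z^{n+r-2}, \ldots$), which for large $n$ precludes algebraic dependencies among the first $r$ equations. A complementary boundary issue is to control the limit $s_n\to 0$, where the rank of $\dq$ drops and a solution could escape to infinity in $\mathbb P(\mathcal P_n)$; ruling this out again rests on non-degeneracy, which bounds $[S]$ in a compact set as $\bar V$ varies in a bounded region. I expect this independence/rank analysis to be the hardest part, and it is presumably where the main effort of \cite{BShN} is concentrated.
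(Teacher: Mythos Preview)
The paper does not prove this proposition at all: it is quoted verbatim as one of ``two main results of \cite{BShN}, see Corollary~1 and Theorem~9 there,'' and the present paper only uses it as input. So there is no proof here to compare your proposal against.

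That said, your outline is a reasonable route and your first part is essentially complete: the leading-term computation correctly pins down $v_r = -\sum_i q_{i,r+i}\,n!/(n-i)!$, and non-degeneracy ($q_{k,r+k}\neq 0$) forces this to be nonzero for large $n$, giving $\deg V = r$.

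Your second part, however, is only a heuristic dimension count, and you say so yourself. The expected dimension $\dim\Sigma_n = 0$ does not by itself yield either existence or finiteness: a complete intersection argument or a Chern-class computation would need the section $(\bar V,[S])\mapsto L_V S$ to be sufficiently generic, and your two proposed escapes (positive-dimensional components from dependent top equations, and solutions running off to $s_n=0$) are exactly the obstructions. You correctly flag these as the substantive content and defer to \cite{BShN}. Since the present paper also defers to \cite{BShN}, your proposal is consistent with how the result is treated here, but it is not a self-contained proof and should not be read as one.
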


    The next localization result is of a special importance to us.

\begin{theorem}

 For any non-degenerate higher Lam\'e operator $\dq$ and any $\eps>0$ there exists a positive integer $N_\eps$ such that the   zeros of all its Van Vleck polynomials $V(z)$ possessing a Heine-Stieltjes   polynomial $S(z)$ of degree $n\ge N_\eps$ as well as all zeros of these Heine-Stieltjes polynomials belong to $Conv_{Q_{k}}^{\eps}$.  Here $Conv_{Q_{k}}$ is the convex hull of all zeros of the leading coefficient $Q_{k}(z)$   and $Conv_{Q_{k}}^{\eps}$ is its $\eps$-neighborhood in the usual Euclidean distance on $\bC$.

    \label{th:local}
    \end{theorem}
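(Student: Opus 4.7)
My plan is to localize the $S$-zeros and the $V$-zeros separately via a potential-theoretic limit argument, after first deriving the classical Stieltjes substitution at a root of $S$. If $s=s_{j_{0}}$ is a simple zero of $S$ and we write $S(z)=(z-s)T(z)$ with $T(s)\ne 0$, then $S^{(i)}(s)=i\,T^{(i-1)}(s)$ and $T^{(j)}(s)/T(s)=j!\,e_{j}(w)$, where $w_{j}=1/(s-s_{j})$ for $j\ne j_{0}$ and $e_{j}$ is the $j$-th elementary symmetric polynomial in the $w_{j}$'s. Substituting $z=s$ in $\mathfrak{d}(z)S(z)+V(z)S(z)=0$ annihilates the $VS$ contribution and yields
\[
\sum_{i=1}^{k} i!\, Q_{i}(s)\, e_{i-1}(w)=0
\]
(multiple zeros produce the analogous identity from the first nonvanishing Taylor coefficient). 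A preliminary a priori bound, obtained by comparing growth rates at $|z|\to\infty$ using $\deg Q_{k}=k+r$ and $\deg Q_{i}\le i+r$, forces all zeros of every $S_{n}$ (and every Van Vleck polynomial) to stay in one fixed disk independent of $n$.

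\textbf{Localization of the $S$-zeros.}
Arguing by contradiction, assume a subsequence $S_{n_{\ell}}$ has a zero $s^{(\ell)}$ at distance $\ge\epsilon$ from $\mathrm{Conv}_{Q_{k}}$. By the a priori bound and weak compactness, extract convergent subsequences of the normalized root-counting measures $\mu_{n_{\ell}}\to\mu$ and of the rescaled Van Vleck polynomials $V_{n_{\ell}}/n_{\ell}^{k}\to v_{\infty}$ (a polynomial of degree at most $r$). Dividing $\mathfrak{d}S+VS=0$ by $n^{k}S(z)$ and using $(S^{(i)}/S)(z)/n^{i}\to C_{\mu}(z)^{i}$, where $C_{\mu}(z)=\int \frac{d\mu(\zeta)}{z-\zeta}$ is the Cauchy transform of $\mu$, the limit equation is
\[
Q_{k}(z)\,C_{\mu}(z)^{k}=-v_{\infty}(z),
\]
valid off $\mathrm{supp}\,\mu$. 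A standard analysis of the $k$-valued algebraic function $C_{\mu}$ (its branch points lie in the zero set of $Q_{k}\cdot v_{\infty}$), combined with the positivity of $\mu$, then forces $\mathrm{supp}\,\mu\subset\mathrm{Conv}_{Q_{k}}$, contradicting the escape of $s^{(\ell)}$.

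\textbf{Localization of the $V$-zeros.}
With $\mathrm{supp}\,\mu\subset\mathrm{Conv}_{Q_{k}}$, any subsequential limit $z^{*}_{\infty}$ of a zero of $V_{n}$ lying outside $\mathrm{Conv}_{Q_{k}}^{\epsilon}$ is a zero of $v_{\infty}$ (Hurwitz applied to $V_{n_{\ell}}/n_{\ell}^{k}\to v_{\infty}$), so $Q_{k}(z^{*}_{\infty})C_{\mu}(z^{*}_{\infty})^{k}=0$. Since $z^{*}_{\infty}\notin\mathrm{Conv}_{Q_{k}}$, $Q_{k}(z^{*}_{\infty})\ne 0$, which forces $C_{\mu}(z^{*}_{\infty})=0$. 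But the classical fact that the Cauchy transform of a positive measure never vanishes outside the convex hull of its support (via Hahn--Banach: a separating half-plane gives $\mathrm{Re}(\bar{\alpha}\, C_{\mu}(z^{*}_{\infty}))>0$ for a suitable unit $\alpha$), combined with $\mathrm{supp}\,\mu\subset\mathrm{Conv}_{Q_{k}}$, implies $C_{\mu}(z^{*}_{\infty})\ne 0$: contradiction. Hence the zeros of $V_{n}$ also lie in $\mathrm{Conv}_{Q_{k}}^{\epsilon}$ once $n\ge N_{\epsilon}$.

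\textbf{Main obstacle.}
The most delicate step is the potential-theoretic conclusion $\mathrm{supp}\,\mu\subset\mathrm{Conv}_{Q_{k}}$ from the algebraic relation $Q_{k}C_{\mu}^{k}=-v_{\infty}$: this is a $k$-th root variant of classical Stahl-type structure theorems for the support of extremal algebraic-curve measures and requires careful control of the branch-cut geometry. A more elementary but equally delicate alternative would proceed via a direct extremal-point argument on the Stieltjes identity $\sum_{i}i!\,Q_{i}(s)\,e_{i-1}(w)=0$ at the outermost zero $s$: after rotating so that the $w_{j}$ lie in a closed half-plane, one would have to establish a sharp lower bound on $|e_{k-1}(w)|$ via Newton's identities and rule out cancellations along the boundary of that half-plane, which is the main technical obstruction in this route.
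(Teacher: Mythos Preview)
First, note that the present paper does not contain a proof of this theorem: it is quoted from \cite{BShN} (see the sentence preceding Proposition~\ref{pr:exist}, ``we need to formulate two main results of \cite{BShN}, see Corollary~1 and Theorem~9 there''). So there is no in-paper argument to compare against, and your proposal has to stand on its own.

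It does not. The central logical gap is that weak convergence of root-counting measures cannot localize \emph{individual} zeros, which is what the theorem asserts. Even if one grants both your a~priori disk bound and the inclusion $\mathrm{supp}\,\mu\subset\mathrm{Conv}_{Q_k}$ for every subsequential limit $\mu$, this does not contradict the escape of the zero $s^{(\ell)}$: a single zero contributes mass $1/n_\ell$ to $\mu_{n_\ell}$ and vanishes in the weak limit, and more generally $o(n_\ell)$ zeros can sit anywhere in your a~priori disk without affecting $\mathrm{supp}\,\mu$. The same circularity infects the $V$-zero step: applying Hurwitz to $V_{n_\ell}/n_\ell^{k}\to v_\infty$ requires the lower-order coefficients of $V_{n_\ell}/n_\ell^{k}$ to be bounded, which is equivalent to an a~priori bound on the $V$-zeros---exactly the assertion in dispute. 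On top of this, the two steps you flag yourself are genuinely missing: the ``preliminary a~priori bound'' from growth at infinity is not derived (the equation $\mathfrak{d}S+VS=0$ holds identically, so comparing leading terms gives the leading coefficient of $V$ but says nothing about where the zeros of $S$ or $V$ sit), and the inclusion $\mathrm{supp}\,\mu\subset\mathrm{Conv}_{Q_k}$ is only gestured at. A correct proof, as in \cite{BShN}, works directly at finite $n$ via an extremal-zero argument of the type you mention in your last paragraph, and does not pass through the asymptotic measure at all; the measure-theoretic machinery of the present paper presupposes Theorem~\ref{th:local} rather than establishing it.
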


  \begin{rema+}ÊProposition~\ref{pr:exist} together with  Theorem~\ref{th:local} show that  the zeros of all Van Vleck polynomials having Stieltjes polynomials of degree greater or equal to a sufficiently large positive integer  $n$  are confined in some fixed $\eps$-neighborhood $Conv_{Q_{k}}^{\eps}$ of $Conv_{Q_{k}}$.  Therefore,  there exist  plenty of converging subsequences $\{\widetilde V_{n,i_n}(z)\}$ of normalized Van Vleck polynomials. Here $\widetilde V_{n,i_n}(z)$ is a monic polynomial proportional to $V_{n,i_n}(z)$ and $ V_{n,i_n}(z)$ is some Van Vleck polynomial having a Stieltjes polynomial of degree $n$.
  \end{rema+}

    It seems natural to pose the following two questions.

    \begin{Prob} What happens with the set  $\{\widetilde V_{n,i}(z)\}$ of normalized Van Vleck polynomials having a Stieltjes polynomial of degree exactly $n$ when $n\to\infty$?
       \end{Prob}

       \begin{Prob} What happens with the subsequence $\{S_{n,i_n}(z)\}$ of Stieltjes polynomials whose corresponding sequence $\{\widetilde V_{n,i_n}(z)\}$ of normalized Van Vleck polynomials has a limit?
       \end{Prob}

       At the present moment we do not have even a conjectural answer to Problem 1. Some initial steps in this direction can be found in \cite{ShTaTa}. Problem 2 however has a satisfactory answer reported below.  

\begin{defi+}Ê
Given a probability measure $\mu$ supported on some subset of $\bC$
we define  its Cauchy transform $\C_\mu$ as
$$\C_\mu(z)=\int_{\bC}\frac {d\mu(\zeta)}{z-\zeta}.$$
\end{defi+}

Obviously, $\C_{\mu}$ is analytic in the complement to the support
of $\mu$.

\begin{defi+} Given a polynomial $P(z)$ of degree $m$ we define its root-counting measure $\mu_P$
as the finite probability measure given by
$$\mu_P(z)=\sum_j\frac{k_j\delta(z-z_j)}{m}$$
where $j$ runs over the index set of the set of all distinct zeros
$\{z_j\}$ of $P(z)$, $\delta(z-z_j)$ is the usual Dirac delta
function concentrated at $z_j$ and $k_j$ is the multiplicity of the
zero $z_j$ of $P(z)$.
\end{defi+}

Assume now that a subsequence $\{\widetilde V_{n,i_n}(z)\}$ of
normalized Van Vleck polynomials of an operator $\dq$ converges as
$n\to \infty$  to some monic polynomial $\widetilde V(z)$ of degree
$r$.

Our first result is a far reaching generalization of the Main
Theorem of  \cite{BR} together  with the main result of \cite{MFS}
describing the asymptotic root distribution of Heine-Stieltjes
polynomials under the asumptions that one picks a sequence with
(asymptotically)  the same portion of roots in each of the intervals
$(a_{i},a_{i+1})$. The latter process corresponds to the choice of a
sequence of Van Vlecks converging  (up to a scalar factor) to some
limiting polynomial.

     \medskip
     \begin{theorem}  For any non-degenerate higher Lam\'e operator $\dq$ of order $k$ take  any subsequence  $\{\widetilde V_{n,i_{n}}(z)\}$ of its normalized Van Vleck polynomials    converging   to some  monic polynomial $\widetilde V(z)$.
Then the sequence $\{\mu_{n,i_{n}}\}$ of the root-counting measures
of the corresponding Stieltjes polynomials $\{S_{n,j_{n}}(z)\}$ weakly
converges to a probability measure $\mu_{\dd,\widetilde V}$ whose
Cauchy  transform $\CC_{\dd,\widetilde V}(z)$ satisfies almost
everywhere  in $\bC$ the equation
\begin{equation}\label{eq:pow}
\CC_{\dd,\widetilde V}^k(z)=\frac {\widetilde V(z)}{Q_{k}(z)}.\end{equation}

\label{th:higRull}
\end{theorem}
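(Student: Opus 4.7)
The plan is to identify the weak limit of $\{\mu_{n,i_n}\}$ through its Cauchy transform. By Theorem~\ref{th:local} all the root-counting measures are supported in the common compact set $Conv_{Q_{k}}^{\eps}$, so the sequence is tight and every subsequence has a weakly convergent sub-subsequence $\mu_{n,i_n} \rightharpoonup \mu$. A compactly supported probability measure is determined by its Cauchy transform on the complement of its support, so it will suffice to establish \eqref{eq:pow} for any such subsequential limit $\mu$; the correct analytic branch of $(\widetilde V/Q_k)^{1/k}$ is singled out by the normalization $\CC_\mu(z) = 1/z + O(1/z^2)$ at infinity, and consequently all subsequential limits coincide and the full sequence converges weakly to the same measure $\mu_{\dd,\widetilde V}$.

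The main tool is the logarithmic-derivative identity. Setting $L_n(z) = S_{n,i_n}'(z)/S_{n,i_n}(z) = n\CC_{\mu_{n,i_n}}(z)$ and $T_j(z) = S_{n,i_n}^{(j)}(z)/S_{n,i_n}(z)$, the recursion $T_{j+1} = T_j' + L_n T_j$ yields $T_j = L_n^j + R_j$, where every term of $R_j$ carries at least one derivative $L_n^{(m)}$ with $m\ge 1$. On any compact $K \subset \bC \setminus Conv_{Q_{k}}^{\eps}$ the Cauchy transforms $\CC_{\mu_{n,i_n}}$ are uniformly bounded together with all derivatives, hence $L_n^{(m)} = O(n)$ uniformly on $K$; counting powers of $n$ in $R_j$ gives $T_j(z) = n^j \CC_{\mu_{n,i_n}}(z)^j + O(n^{j-1})$. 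Since weak convergence of compactly supported measures implies locally uniform convergence of Cauchy transforms on the complement of the support, I then obtain $T_j(z)/n^j \to \CC_\mu(z)^j$ locally uniformly on $\bC \setminus Conv_{Q_{k}}^{\eps}$.

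I will then divide the eigenvalue identity $\sum_{i=1}^k Q_i(z) T_i(z) + V_{n,i_n}(z) = 0$ by $n^k$. The summands with $i<k$ contribute $O(n^{i-k}) \to 0$, and the $i=k$ summand tends to $Q_k(z) \CC_\mu(z)^k$. A leading-coefficient computation (reading off the coefficient of $z^{n+r}$ in $\dd S_{n,i_n}$, with $S_{n,i_n}$ normalized monic and using $\deg Q_k = k+r$) shows that the leading coefficient of $V_{n,i_n}$ is asymptotic to $-n(n-1)\cdots(n-k+1)$ times the leading coefficient of $Q_k$, so that $V_{n,i_n}/n^k$ converges to a scalar multiple of $\widetilde V(z)$ matching the normalization chosen for $\widetilde V$. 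Passing to the limit on $\bC \setminus Conv_{Q_{k}}^{\eps}$ and dividing by $Q_k(z)$ then yields $\CC_\mu(z)^k = \widetilde V(z)/Q_k(z)$, which extends by analytic continuation to the complement of the support of $\mu$ and therefore holds almost everywhere in $\bC$.

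The principal obstacle will be establishing the asymptotic $T_j/n^j = \CC_{\mu_{n,i_n}}^j + O(1/n)$ in a form sufficiently quantitative and locally uniform away from $Conv_{Q_{k}}^{\eps}$ to permit passage to the limit inside the ODE. This rests on the combinatorial observation that in the recursion for $T_j$ every deviation from $L_n^j$ picks up a derivative of $L_n$, and thereby a factor of $n$ less, together with the uniform control of $\CC_{\mu_{n,i_n}}$ and its derivatives on compacta of $\bC \setminus Conv_{Q_{k}}^{\eps}$ afforded by Theorem~\ref{th:local}. Once this asymptotic expansion is in hand, the remaining ingredients — comparing leading coefficients of $V_{n,i_n}$, passing to the limit term-by-term, and pinning down the analytic branch of the $k$-th root by the behavior at infinity — are essentially bookkeeping.
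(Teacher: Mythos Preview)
Your argument establishes the algebraic relation $\CC_\mu(z)^k=\widetilde V(z)/Q_k(z)$ only on the exterior region $\bC\setminus Conv_{Q_k}^\eps$ (and, by analytic continuation, on the unbounded component of $\bC\setminus\text{supp}\,\mu$). That is \emph{not} enough to identify the limit measure. Your sentence ``a compactly supported probability measure is determined by its Cauchy transform on the complement of its support'' is being used as if knowing $\CC_\mu$ outside a fixed compact set pins down $\mu$; it does not. For instance, normalized arclength on the unit circle and normalized area on the unit disk have the same Cauchy transform $1/z$ on $\{|z|>1\}$. So two subsequential limits $\mu_1,\mu_2$ could perfectly well share the same $\CC$ on $\bC\setminus Conv_{Q_k}$ and still differ inside. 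The phrase ``therefore holds almost everywhere in $\bC$'' at the end of your third paragraph is exactly the unjustified step: you have no a~priori control on $\text{supp}\,\mu$ (it could have positive area, or its complement could have bounded components), so analytic continuation from the exterior does not yield an a.e.\ statement.

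The paper closes this gap with an argument you are missing entirely. It passes to a subsequence along which the root-counting measures $\bar\mu_n^{(j)}$ of \emph{all} derivatives $\bar S_n^{(j)}$, $j=0,\dots,k$, converge, and uses $L^1_{loc}$ convergence of $\bar S_n^{(j+1)}/((n-j)\bar S_n^{(j)})$ to $\CC_{\bar\mu^{(j)}}$ on the whole plane, not locally uniform convergence on the exterior. Dividing the ODE by $n\cdots(n-k+1)\bar S_n$ gives $\prod_{j=0}^{k-1}\CC_{\bar\mu^{(j)}}=\widetilde V/Q_k$ a.e. The crucial extra ingredient is a potential-theoretic sandwich: one shows $u^{(0)}\ge u^{(1)}\ge\cdots\ge u^{(k)}$ for the logarithmic potentials (this is a separate lemma comparing the potentials of $p_m$ and $p_m'$), while the ODE forces $u^{(k)}-u^{(0)}=0$ a.e.; hence all $\bar\mu^{(j)}$ coincide and $\CC_{\bar\mu}^k=\widetilde V/Q_k$ a.e.\ in $\bC$. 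Only then does the uniqueness result of Theorem~\ref{th:charac} apply, and that uniqueness is itself nontrivial (proved via the structure of the extended support and convexity of $u\circ\Psi^{-1}$), not the one-line remark about the branch at infinity that you give. Your recursion $T_{j+1}=T_j'+L_nT_j$ is fine for the exterior asymptotics, but it cannot substitute for this potential-theoretic step.
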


    Figures~1  and 2  illustrate the above Theorem~\ref{th:higRull}.

     \begin{figure}[!htb]
\centerline{\hbox{\epsfysize=3cm\epsfbox{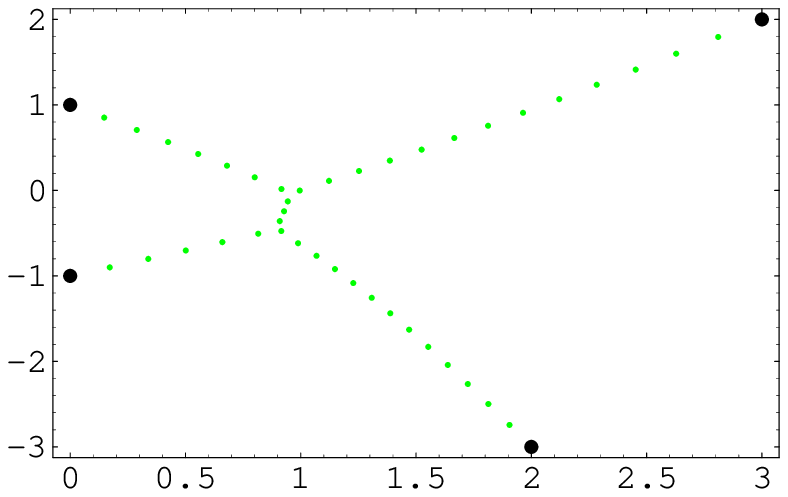}}}

\vskip 0.5cm {Figure 1. The union of the zeros of 40 linear Van Vlecks
for the operator $\frac {d^3}{dz^3}(Q(z)S(z))+V(z)S(z)=0$ with
$Q(z)=(z^2+1)(z-3I-2)(z+2I-3)$\\
 whose Stieltjes polynomials are of degree $39$.} \label{fig1}
\end{figure} 

\begin{rema+}ÊNotice that by Theorem~\ref{th:local} the support of  $\mu_{\dd,\widetilde V}$ should lie within $Conv_{Q_k}$ for any $\widetilde V(z)$ which appears as the limit of normalized Van Vleck polynomials.
\end{rema+}

\begin{figure}[!htb]
\centerline{\hbox{\epsfysize=1.7cm\epsfbox{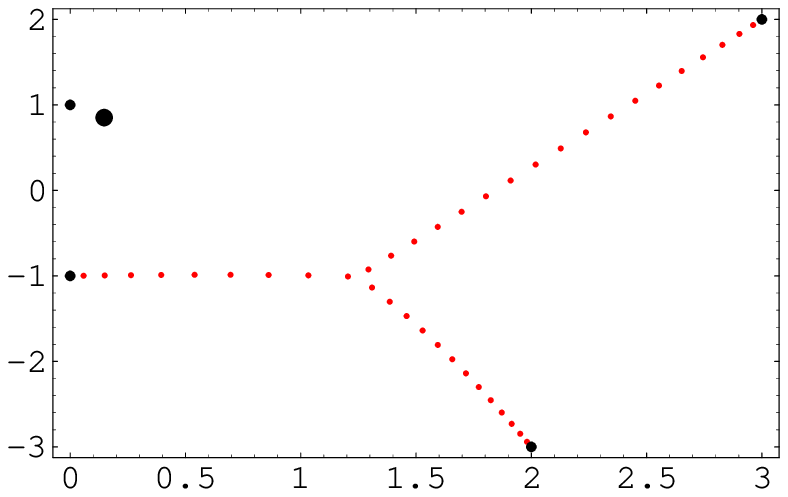}}
\hskip0.5cm\hbox{\epsfysize=1.7cm\epsfbox{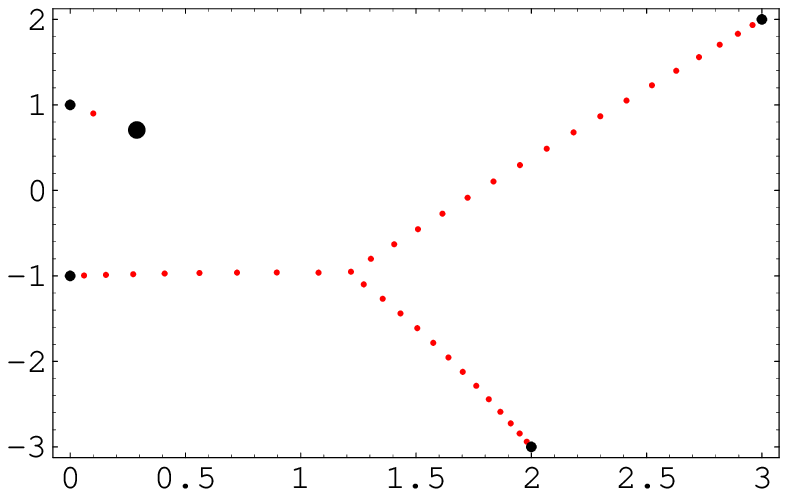}}
\hskip0.5cm\hbox{\epsfysize=1.7cm\epsfbox{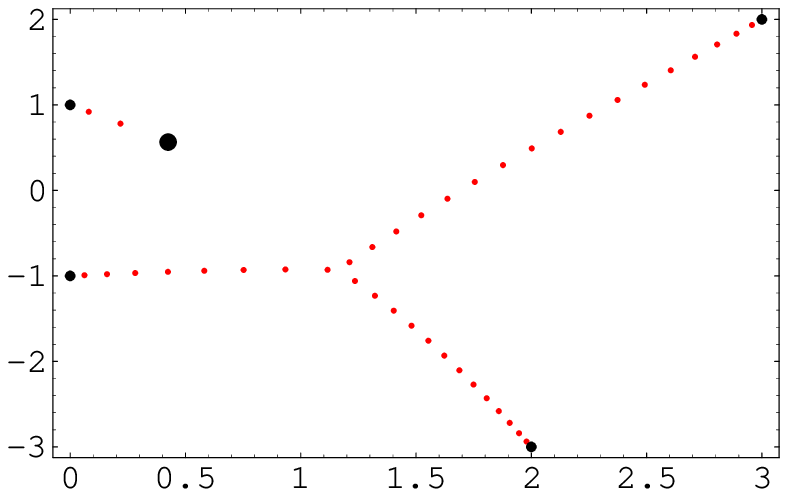}}
\hskip0.5cm\hbox{\epsfysize=1.7cm\epsfbox{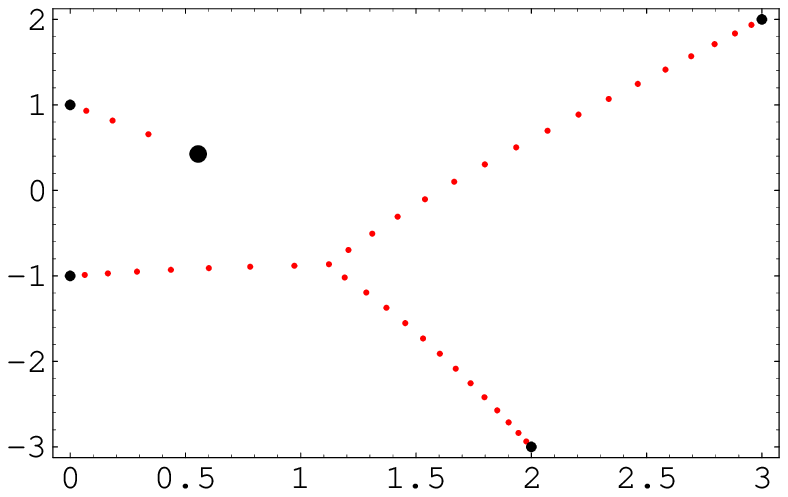}}
\hskip0.5cm\hbox{\epsfysize=1.7cm\epsfbox{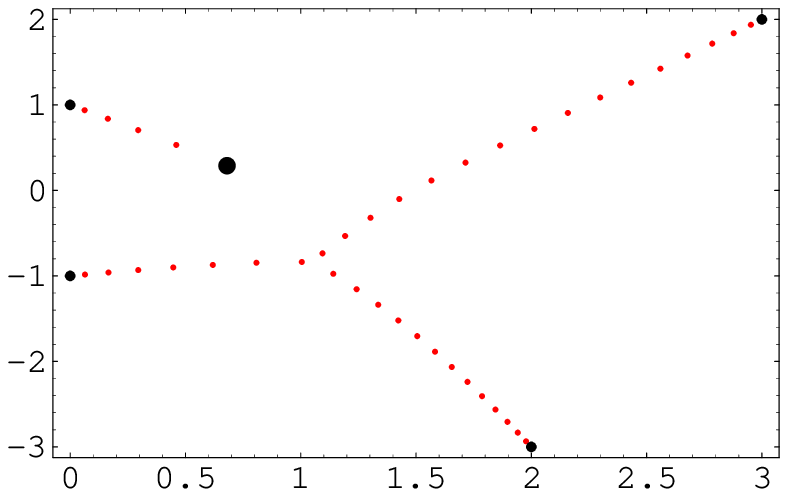}} }

\centerline{\hbox{\epsfysize=1.7cm\epsfbox{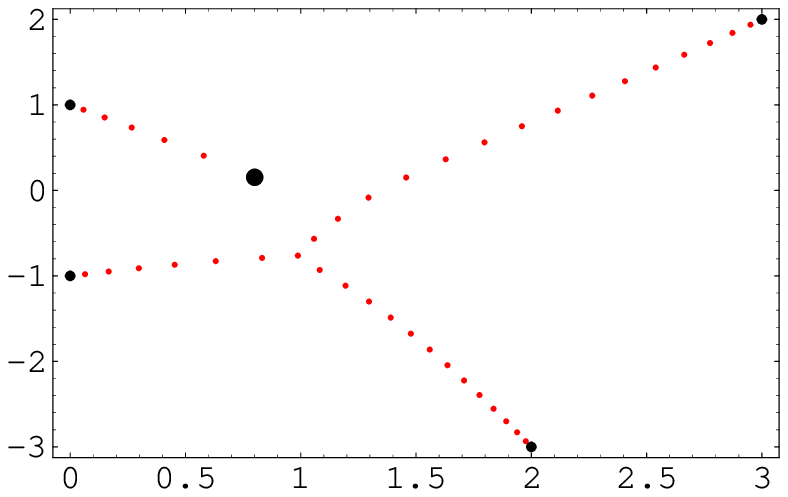}}
\hskip0.5cm\hbox{\epsfysize=1.7cm\epsfbox{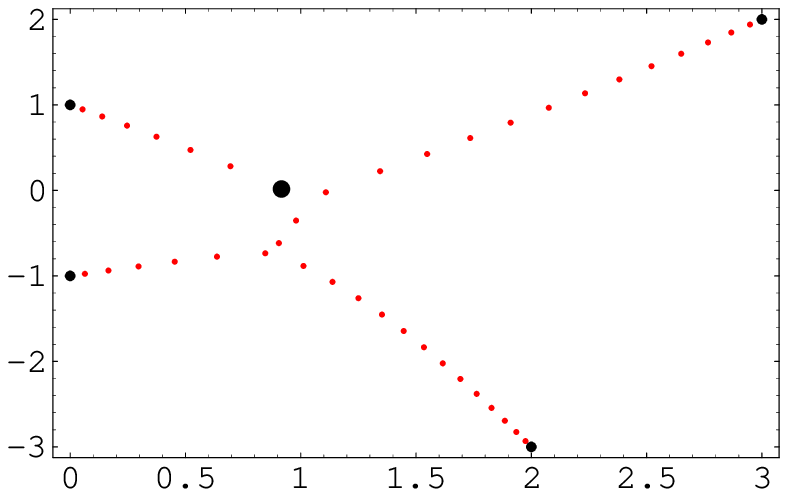}}
\hskip0.5cm\hbox{\epsfysize=1.7cm\epsfbox{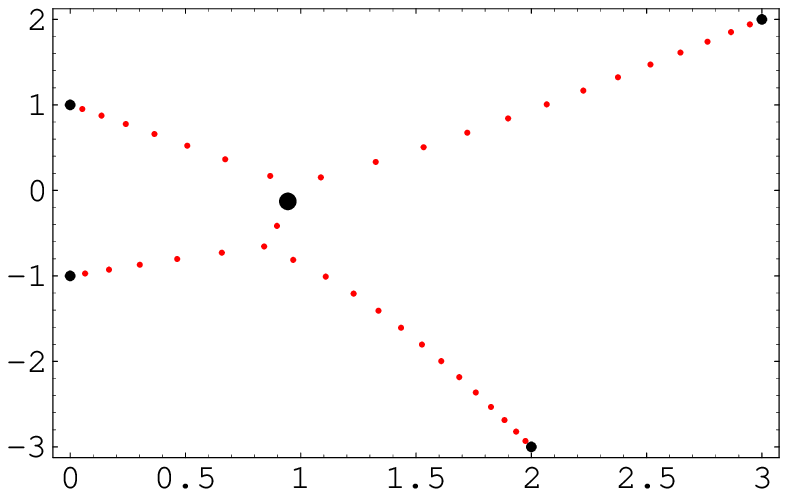}}
\hskip0.5cm\hbox{\epsfysize=1.7cm\epsfbox{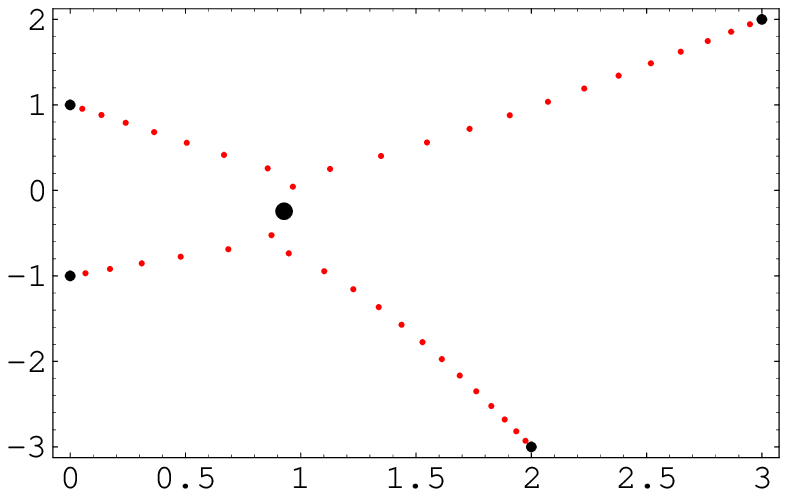}}
\hskip0.5cm\hbox{\epsfysize=1.7cm\epsfbox{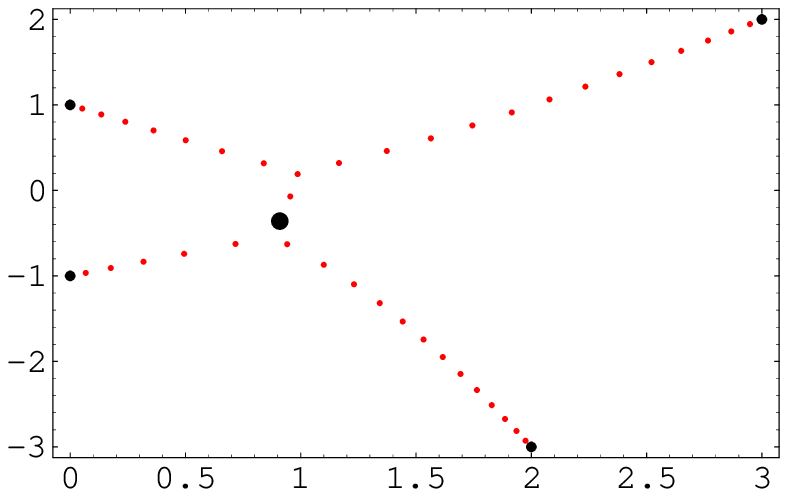}} }

\centerline{\hbox{\epsfysize=1.7cm\epsfbox{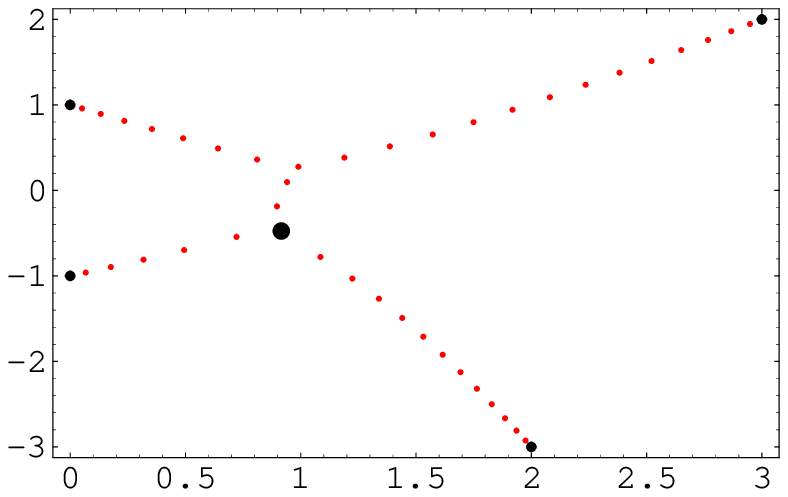}}
\hskip0.5cm\hbox{\epsfysize=1.7cm\epsfbox{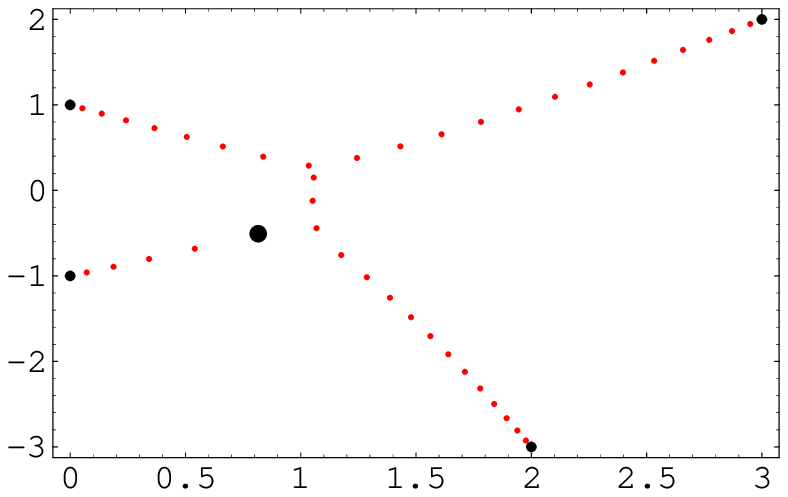}}
\hskip0.5cm\hbox{\epsfysize=1.7cm\epsfbox{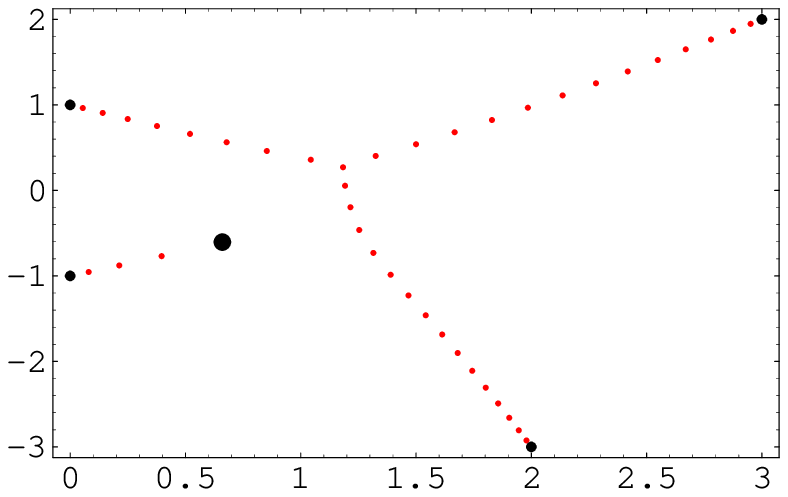}} 
\hskip0.5cm\hbox{\epsfysize=1.7cm\epsfbox{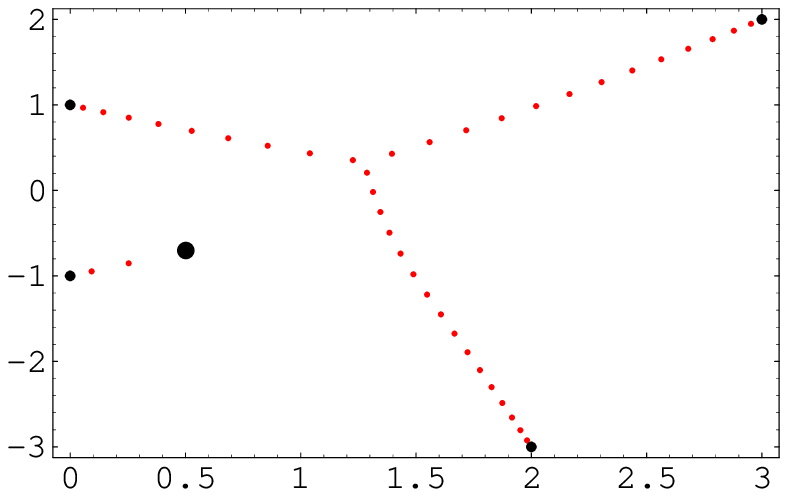}}
\hskip0.5cm\hbox{\epsfysize=1.7cm\epsfbox{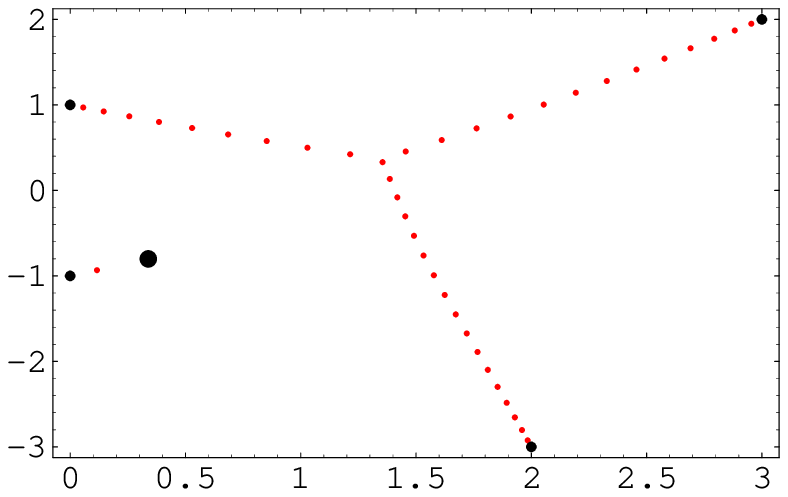}} }

\centerline{\hbox{\epsfysize=1.7cm\epsfbox{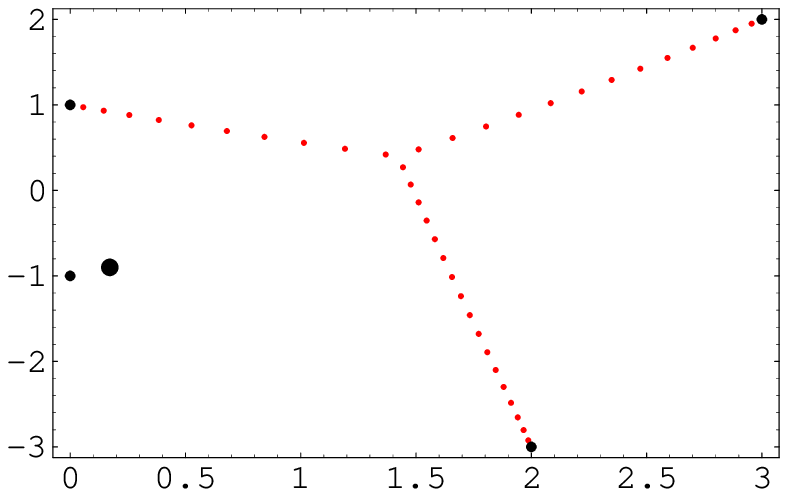}}
\hskip0.5cm\hbox{\epsfysize=1.7cm\epsfbox{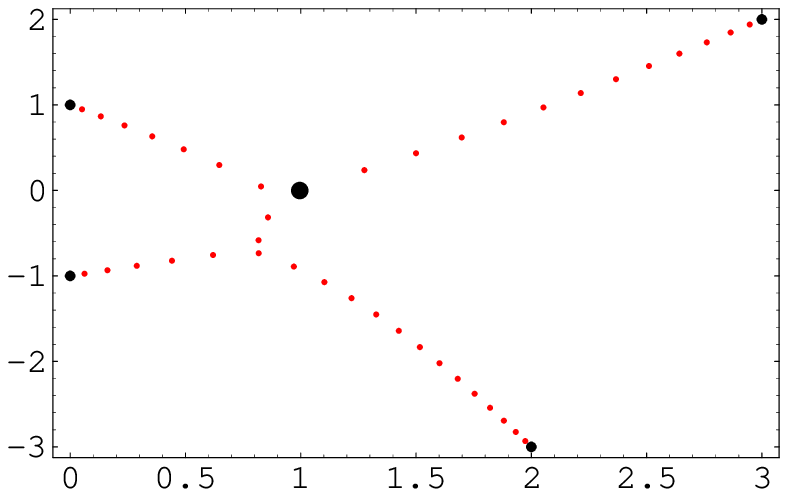}}
\hskip0.5cm\hbox{\epsfysize=1.7cm\epsfbox{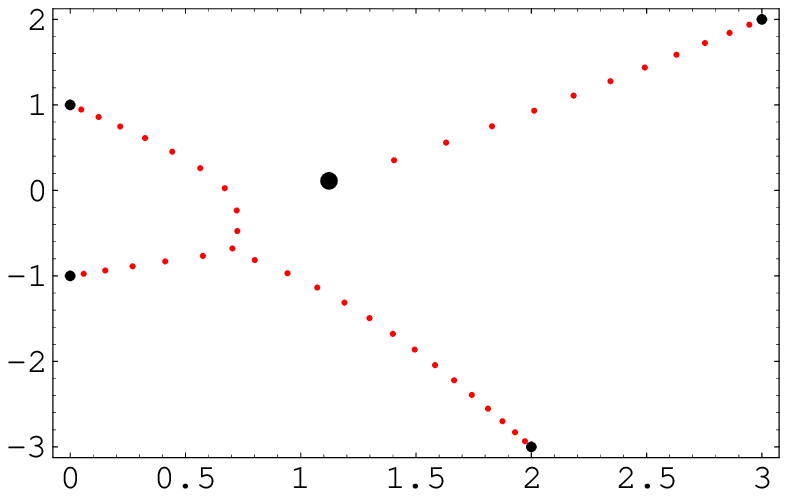}}
\hskip0.5cm\hbox{\epsfysize=1.7cm\epsfbox{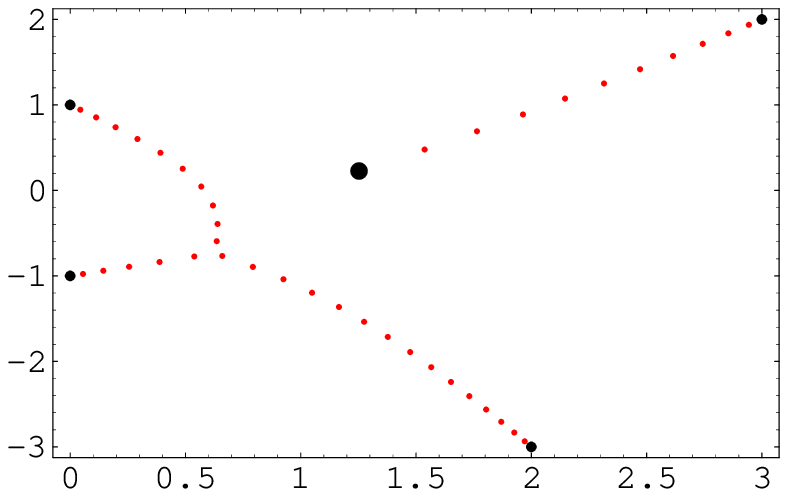}}
\hskip0.5cm\hbox{\epsfysize=1.7cm\epsfbox{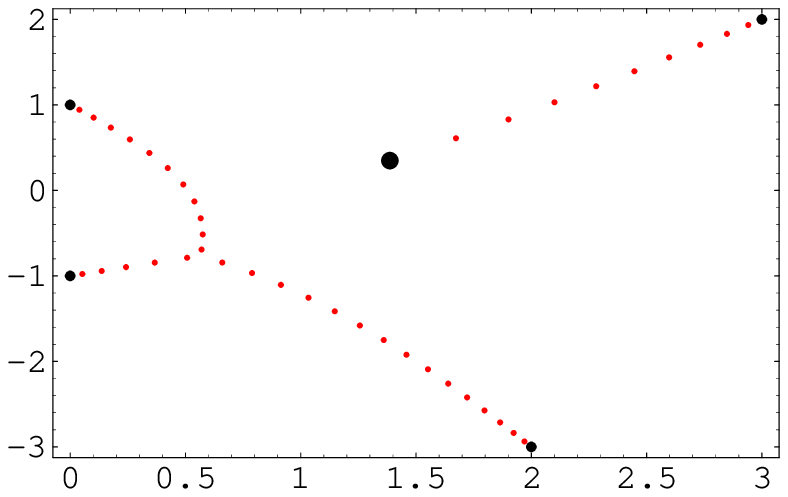}} }

\centerline{\hbox{\epsfysize=1.7cm\epsfbox{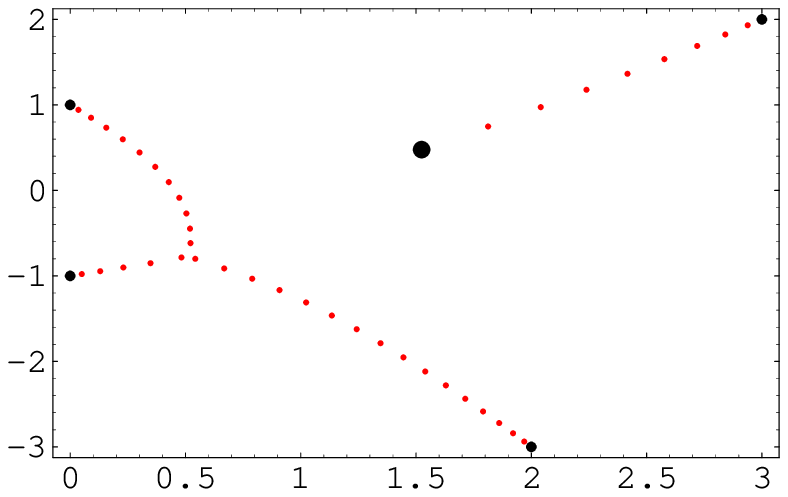}}
\hskip0.5cm\hbox{\epsfysize=1.7cm\epsfbox{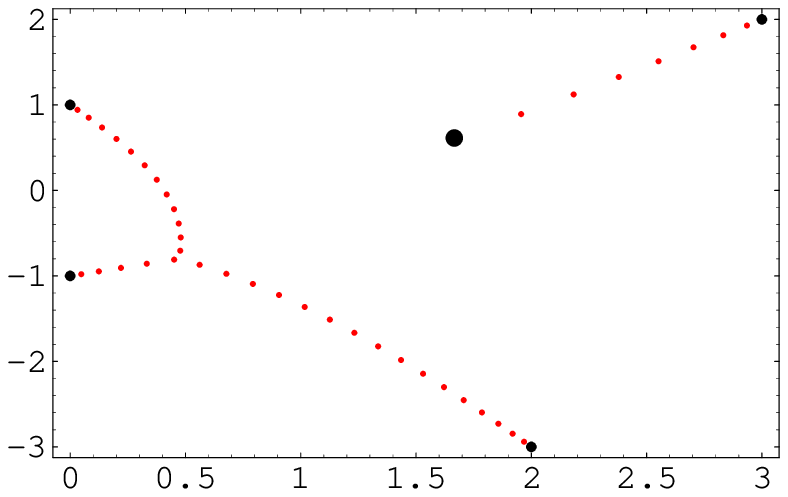}}
\hskip0.5cm\hbox{\epsfysize=1.7cm\epsfbox{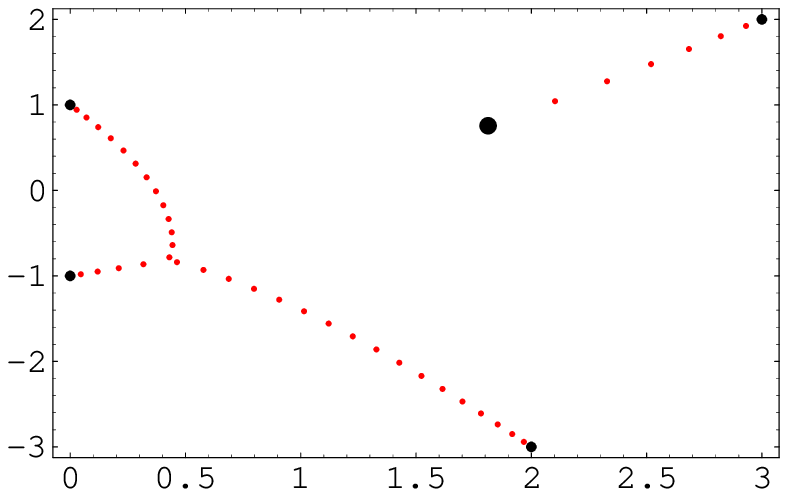}}
\hskip0.5cm\hbox{\epsfysize=1.7cm\epsfbox{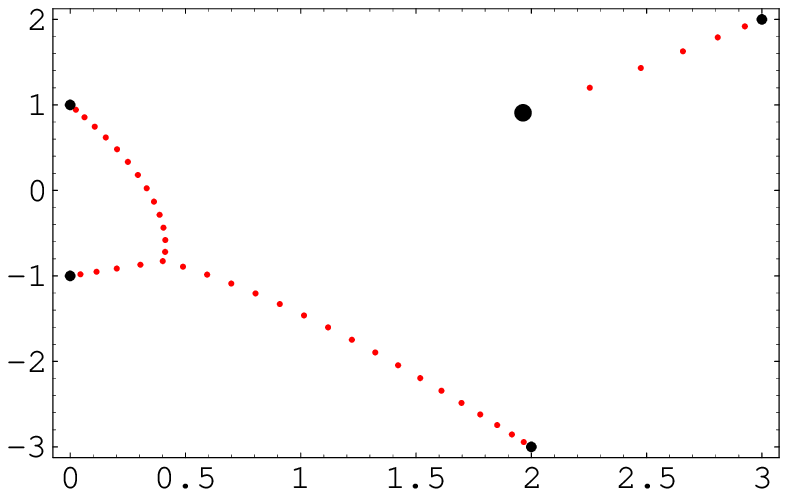}}
\hskip0.5cm\hbox{\epsfysize=1.7cm\epsfbox{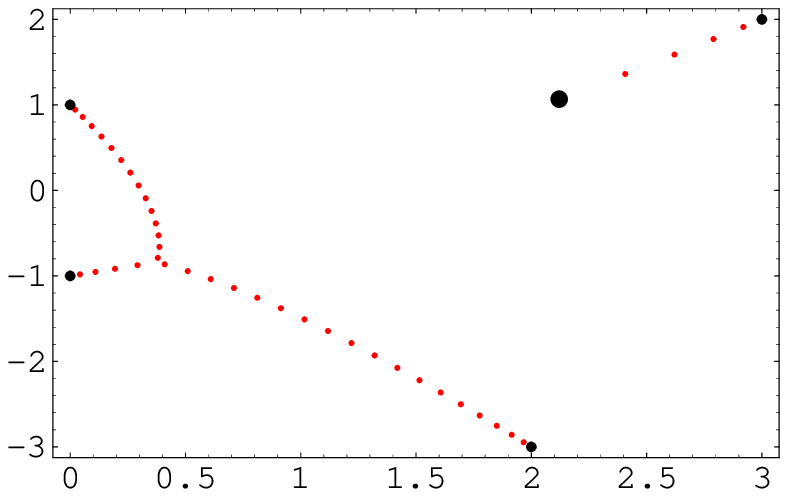}} }


\centerline{\hbox{\epsfysize=1.7cm\epsfbox{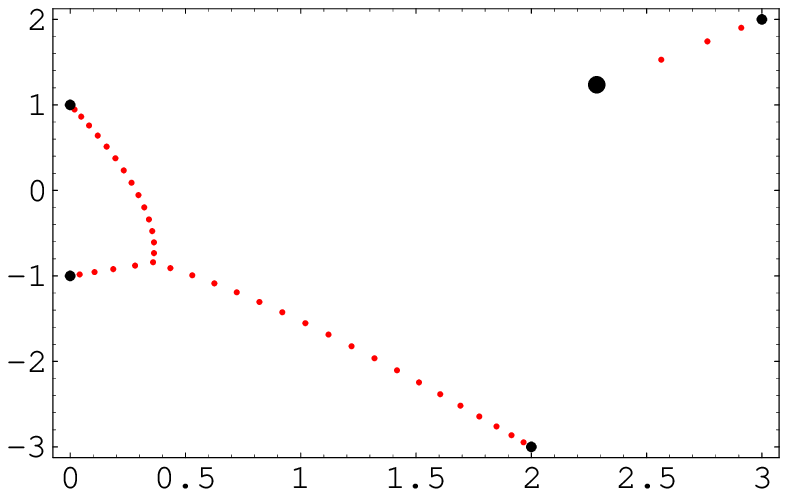}}
\hskip0.5cm\hbox{\epsfysize=1.7cm\epsfbox{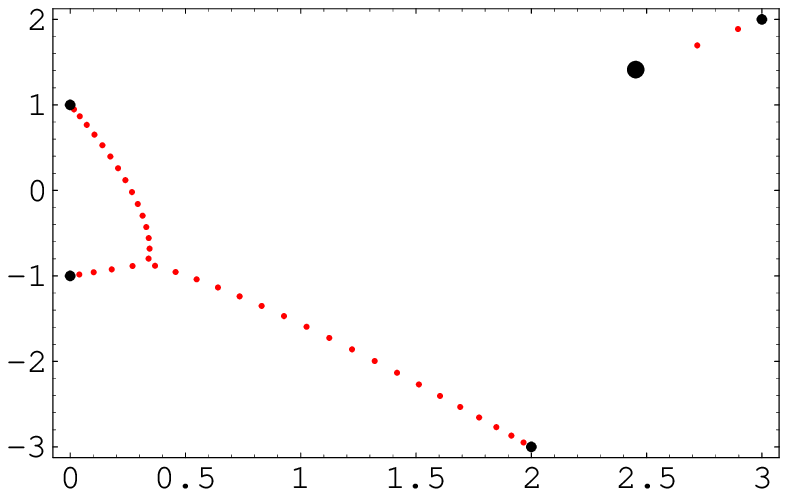}}
\hskip0.5cm\hbox{\epsfysize=1.7cm\epsfbox{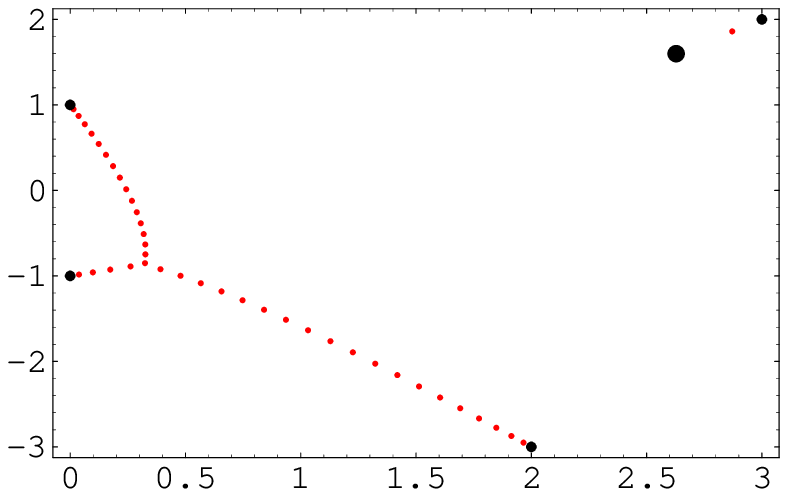}}
\hskip0.5cm\hbox{\epsfysize=1.7cm\epsfbox{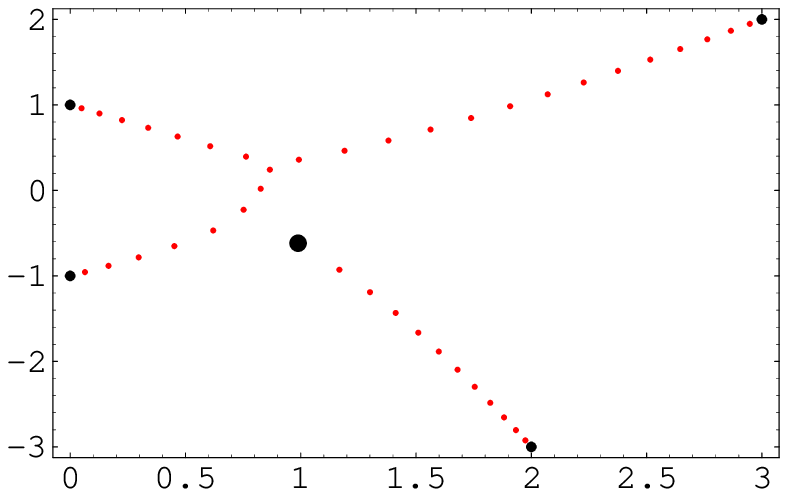}}
\hskip0.5cm\hbox{\epsfysize=1.7cm\epsfbox{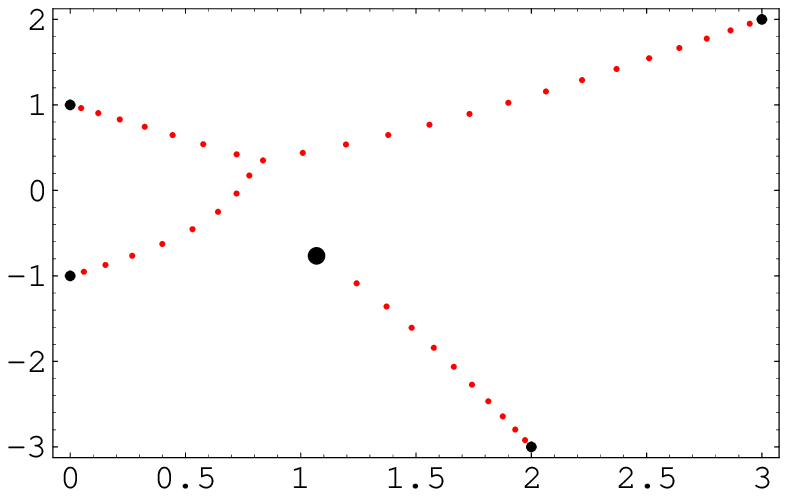}} }

\centerline{\hbox{\epsfysize=1.7cm\epsfbox{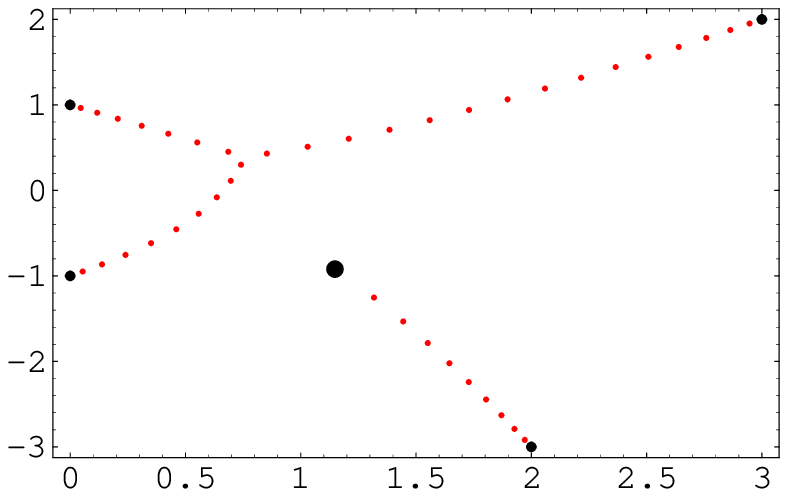}}
\hskip0.5cm\hbox{\epsfysize=1.7cm\epsfbox{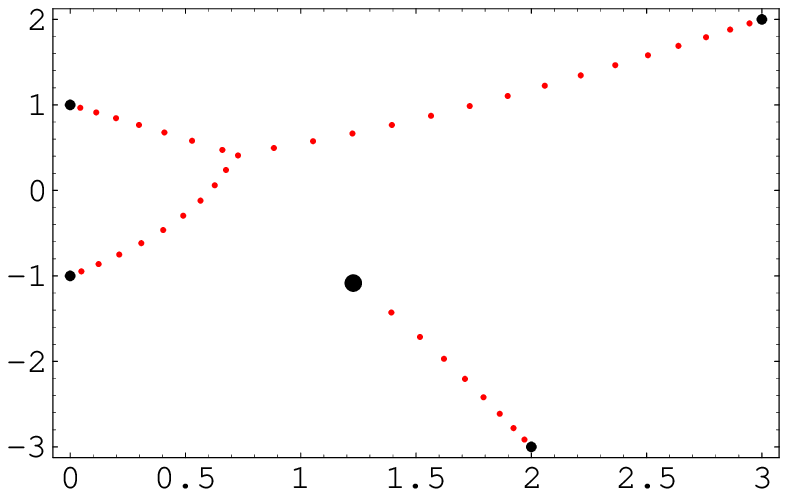}}
\hskip0.5cm\hbox{\epsfysize=1.7cm\epsfbox{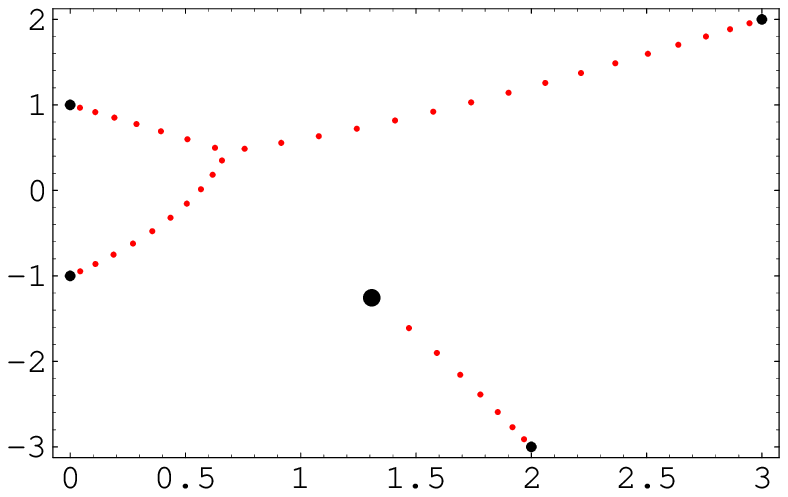}}
\hskip0.5cm\hbox{\epsfysize=1.7cm\epsfbox{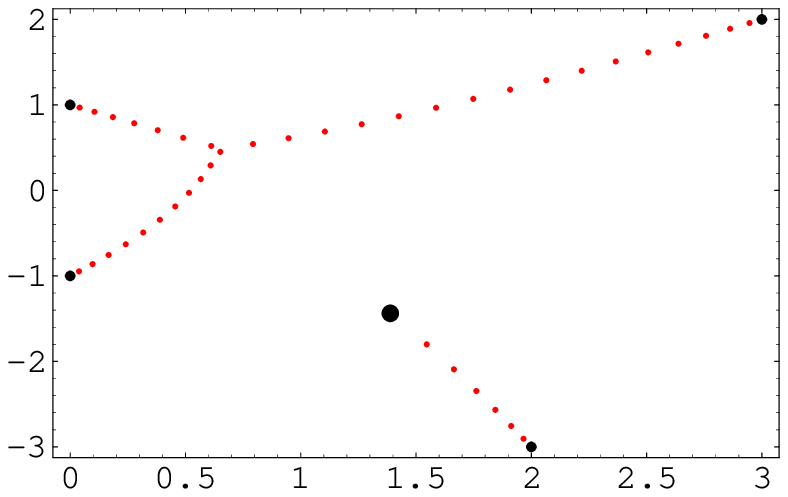}}
\hskip0.5cm\hbox{\epsfysize=1.7cm\epsfbox{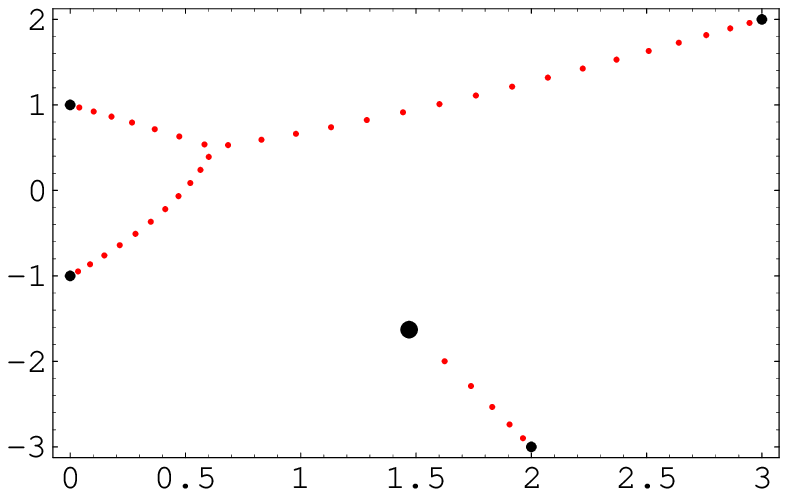}} }

\centerline{\hbox{\epsfysize=1.7cm\epsfbox{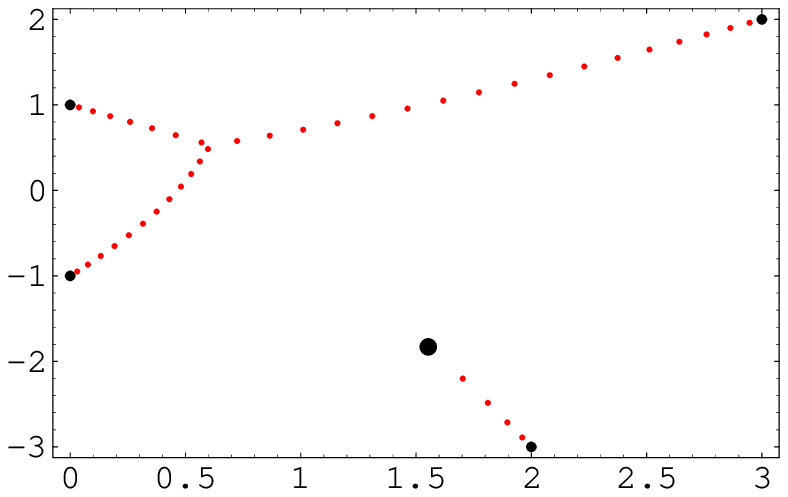}}
\hskip0.5cm\hbox{\epsfysize=1.7cm\epsfbox{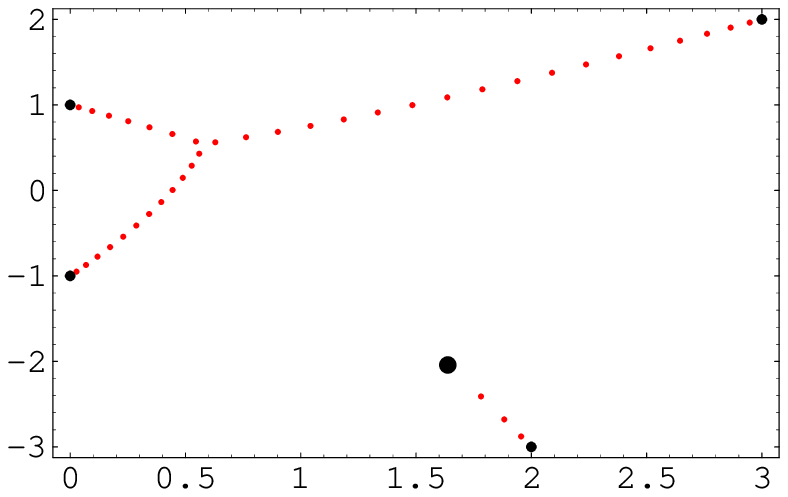}}
\hskip0.5cm\hbox{\epsfysize=1.7cm\epsfbox{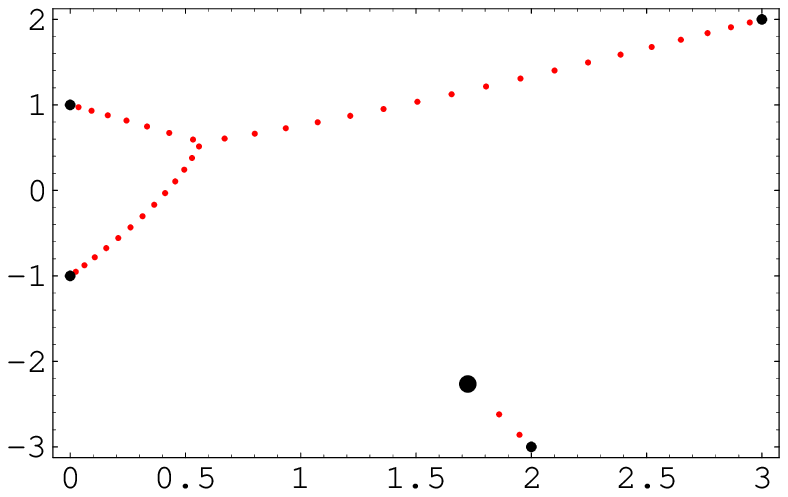}}
\hskip0.5cm\hbox{\epsfysize=1.7cm\epsfbox{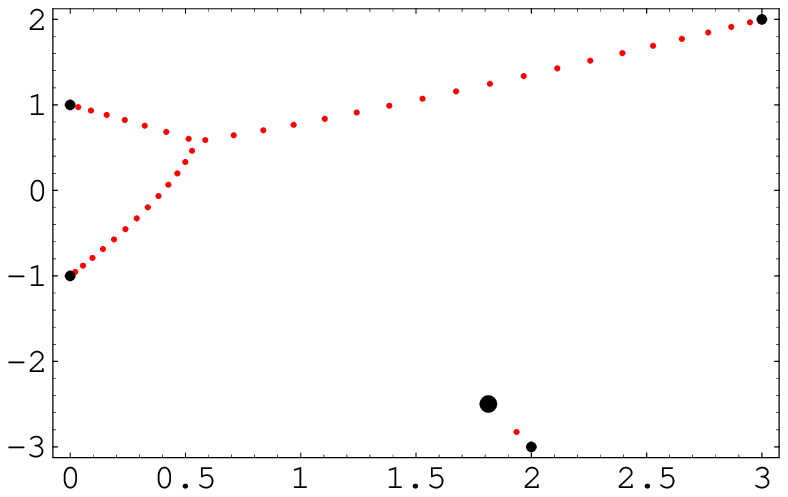}}
\hskip0.5cm\hbox{\epsfysize=1.7cm\epsfbox{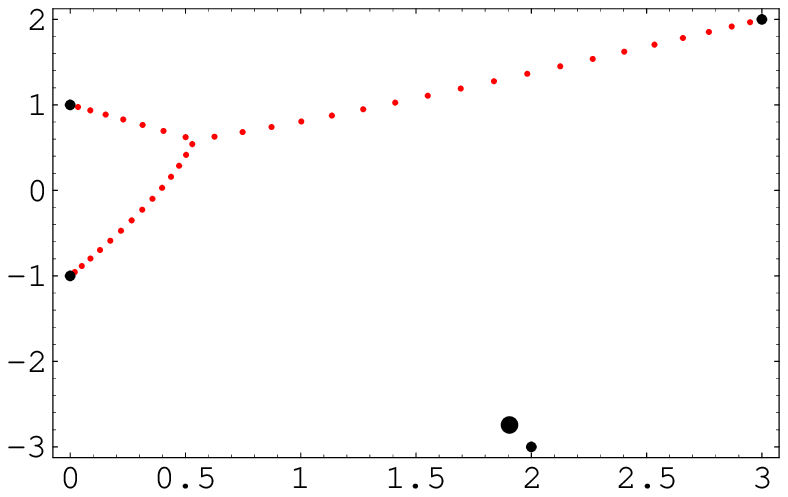}} }

\vskip 0.5cm {Figure 2. Zeros of $40$ different Stieltjes polynomials of
degree $39$ for the equation $\frac
{d^3}{dz^3}(Q(z)S(z))+V(z)S(z)=0$ with
$Q(z)=(z^2+1)(z-3I-2)(z+2I-3)$. } \label{fig2}
\end{figure}

\medskip
\noindent {\em Explanation to Figure~2.} The smaller dots
are the $39$ zeros of $S(z)$; $4$ average size dots are the zeros of
$Q(z)$ and the single large dot is the (only) zero of the
corresponding $V(z)$. For most of the pictures one observes the
typical structure of a curvilinear forest with vertices of degrees
$1$ and $3$ only formed by the roots of $S(z)$ which connects the
roots of $Q(z)$ and that of $V(z)$. At the same time pictures 3-5 in
the second row show the cases of connected support of the
corresponding root counting measure.

Our next result describes some properties of a probability measure
in case when it satisfies the assumptions of
Theorem~\ref{th:higRull}.

\begin{theorem}Ê\label{th:charac} Consider a rational function $R(z)=\frac{V(z)}{Q(z)}$ where $V(z)$ and $Q(z)$ are monic polynomials with  $\deg Q(z)-\deg V(z)=k\ge 2$. Assume that the exists  a compactly supported probability measure $\mu$ on $\bC$ whose Cauchy transform $\C_\mu(z)$ satisfies almost everywhere (wrt the standard Lebesgue measure on $\bC$) the equation
 \begin{equation}Ê
     \CC_{\mu}^k(z)=\frac{V(z)}{Q(z)}.
      \label{eq:power}
     \end{equation}Ê
Then  such a measure $\mu$ is unique (for a given function $R(z)$)
and  its support  is a curvilinear forest  with leaves at the roots
of $V(z)$ and/or  $Q(z)$. This support is straightened out in the
local canonical coordinate   $w(z)=\int_{z_{0}}^z {\root {k} \of{
\frac {{V(t)}}{Q(t)}}}dt$.
\end{theorem}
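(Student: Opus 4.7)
My plan is to prove the theorem in three steps: (i) reduce uniqueness of $\mu$ to connectedness of $\bC\setminus\mathrm{supp}(\mu)$, (ii) analyze the local structure of the support via the Plemelj--Sokhotski jump formula to obtain the straightening, and (iii) combine the straightening with a real-potential argument to exclude cycles in the support and hence establish the forest property.

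For (i), I would use that a compactly supported measure satisfies $\mu = -\pi^{-1}\partial_{\bar z}\C_\mu$ as distributions, so uniqueness of $\mu$ is equivalent to uniqueness of $\C_\mu$. The Cauchy transform is holomorphic off $\mathrm{supp}(\mu)$ with $\C_\mu(z) = 1/z + O(z^{-2})$ at infinity; since $V$ and $Q$ are monic with $\deg Q - \deg V = k$, one has $V/Q = z^{-k} + O(z^{-k-1})$ at infinity, so in the unbounded component of $\bC\setminus\mathrm{supp}(\mu)$ the function $\C_\mu$ must coincide with the unique branch $f_\infty$ of $\sqrt[k]{V/Q}$ asymptotic to $1/z$. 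Once I show (in step (iii)) that $\mathrm{supp}(\mu)$ is a finite forest in $\bC$, whose complement is therefore connected, it follows that $\C_\mu\equiv f_\infty$ globally off the support, proving uniqueness.

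For (ii), on a smooth arc $\gamma$ of the support disjoint from the zeros of $VQ$, both boundary values $\C_\mu^{\pm}$ are analytic branches of $\sqrt[k]{V/Q}$, say $\omega^{j_\pm}(V/Q)^{1/k}$ with $\omega = e^{2\pi i/k}$ and a common local $k$th root. The Plemelj--Sokhotski formula $\C_\mu^- - \C_\mu^+ = 2\pi i \rho/T$, combined with positivity of the arc-length density $\rho$ and the fact that $\omega^{j_-} - \omega^{j_+}$ is a nonzero constant, forces $(V/Q)^{1/k}T \in e^{i\theta}\bR$ for a fixed $\theta$ along $\gamma$, i.e.\ $(V/Q)\, dz^k > 0$ on $\gamma$. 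In other words $\gamma$ is a horizontal trajectory of the meromorphic $k$-differential $(V/Q)\, dz^k$, and in the canonical coordinate $w$ with $dw = (V/Q)^{1/k}\,dz$ the condition reads $\mathrm{Im}\,w \equiv \mathrm{const}$ on $\gamma$. Since this trajectory foliation is regular away from the zeros of $VQ$, smooth arcs of the support can only terminate or bifurcate there, giving the structure of a graph whose degree-one vertices lie among the zeros of $V$ and $Q$, with the claimed straightening.

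For (iii), I would introduce the locally defined real potential $u(z) = \mathrm{Re}\, w(z)$; along any horizontal arc $du = \pm|dw|$, so $u$ is strictly monotone. A hypothetical simple cycle in the support, formed from horizontal arcs possibly passing through zeros of $VQ$, would have to bring $u$ back to its starting value after lifting to the Riemann surface of $\sqrt[k]{V/Q}$ (on which $u$ is single-valued away from branch points), contradicting the piecewise monotonicity of $u$. The main obstacle I expect is exactly this lifting step: at each zero of $V$ (a zero of the differential) and each zero of $Q$ (a pole of it), one must check that the support's horizontal arcs match the classical local normal forms so that global monotonicity of $u$ on the universal cover survives the branching. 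This is standard in Strebel's theory for $k=2$ and carries over to general $k$ with a case analysis of the local models; once verified, the three parts combine to yield the theorem.
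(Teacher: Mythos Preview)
Your step (ii) contains a computational error that undermines step (iii). From the jump relation $\C_\mu^{-}-\C_\mu^{+}=(\omega^{j_-}-\omega^{j_+})(V/Q)^{1/k}$ together with positivity of the density you correctly obtain that $(V/Q)^{1/k}T$ has \emph{constant} argument along each arc $\gamma$, i.e.\ $\gamma$ is a straight segment in the $w$-coordinate. But the jump to ``$(V/Q)\,dz^k>0$'' is unjustified: the argument of $(V/Q)^{1/k}T$ is $\arg\bigl(i/(\omega^{j_-}-\omega^{j_+})\bigr)$, which depends on the pair $(j_+,j_-)$. Already for $k=2$ (where $\omega^{j_-}-\omega^{j_+}=\pm2$) this argument is $\pm\pi/2$, so the support arcs are \emph{vertical} in $w$, not horizontal; for $k\ge3$ different arcs sit at different angles (this is exactly the $\pm120^{\circ}$ turning in the paper's Figure~3). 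Consequently $u=\Re w$ is \emph{not} strictly monotone along the support --- for $k=2$ it is constant --- and your cycle-exclusion argument in (iii) collapses. The paper proves the forest/tree property by an entirely different mechanism: it studies the gradient flow $\overline{\C_\mu}$ of the logarithmic potential in the \emph{complement} of the support, shows every trajectory runs off to $\infty$ or to a zero of $V$, and adjoins the finitely many exceptional trajectories so that the complement of this extended support contracts to $\infty$; hence the extended support is a tree.

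Your step (i) also has a gap independent of the above. Knowing that each $\mathrm{supp}(\mu_i)$ is a forest tells you that each $\bC\setminus\mathrm{supp}(\mu_i)$ is connected, so each $\C_{\mu_i}$ is the analytic continuation of the germ $f_\infty$ throughout \emph{its own} complement. But that continuation depends on the domain: if $\mathrm{supp}(\mu_1)\neq\mathrm{supp}(\mu_2)$, a point $p$ may be reachable from $\infty$ only by paths that cross one of the two supports, and then there is no reason for $\C_{\mu_1}(p)=\C_{\mu_2}(p)$. Equivalently, vanishing of the Cauchy transform of the signed measure $\mu_1-\mu_2$ near $\infty$ does not by itself force $\mu_1=\mu_2$. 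The paper handles uniqueness differently: it builds a global map $\Psi_{++}$ from $\bC\setminus\Ups_{\mu_1}$ onto a region $\{\Re w>h(\Im w)\}$, observes that $u_1\circ\Psi_{++}^{-1}(w)=\Re w$ there, and uses that $u_2\circ\Psi_{++}^{-1}$ is piecewise linear and convex along each horizontal ray (Lemma~\ref{lm:hans6}) and agrees with $\Re w$ for large $\Re w$, whence $u_2\ge u_1$; by symmetry $u_1\ge u_2$, so $u_1=u_2$ and $\mu_1=\mu_2$.
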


The next question seems very natural in view of the latter theorem
but we do not have even a good guess about its possible answer
except for some trivial cases.

   \begin{Prob} Which rational functions $R(z)$ admit a probability measure as in Theorem~\ref{th:charac}?
       \end{Prob}








\begin{rema+} In fact we will first prove Theorem~\ref{th:charac} and the use it to settle Theorem~\ref{th:higRull}.
\end{rema+}

{\em Acknowledgements.} The authors are sincerely grateful to
H.~Rullg\aa rd for important discussions of the above topic  and to
Prof. Andrei Mart\'inez-Finkelshtein who brought to our attention
the crucial notion of quadratic differential with closed
trajectories.

\section {On probability measure  with Cauchy transform whose power  is a rational function}\label{sec:rat}

This section is devoted to the proof of Theorem~\ref{th:charac}.
Below we discuss a number of properties of a compactly supported
probability measure $\mu$ whose Cauchy transform $\C_\mu(z)$ satisfies
almost everywhere in $\bC$ the equation
 \begin{equation}
     \CC_{\mu}^k(z)=\frac{V(z)}{Q(z)},
     \label{eq:main}
     \end{equation}
where $V(z)$ and $Q(z)$ are monic polynomials with no common factors.\\
\\
It is clear that $\CC_{\mu}(z)$ can have a pole of order at most 1,
which means that $Q(z)$ can  only have roots of multiplicity $\leq k$
if (\ref{eq:main}) is satisfied.  Suppose now that $Q(z)$ has roots
of multiplicity exactly $k$, say
$Q(z)=(z-a_1)^k\cdot...\cdot(z-a_m)^k\widetilde{Q}(z)$, where
$\widetilde{Q}(z)$ only has roots of multiplicity $<k$. Taking the
$\bar{z}$-derivative in distributional sense yields
\[\pi\mu=\frac{\partial}{\partial\bar{z}}\left[\frac{1}{(z-a_1)\cdot...\cdot(z-a_m)}\left(\frac{V(z)}{\widetilde{Q}(z)}\right)^{1/k}\right]=\]
\[=\sum\limits_{j=1}^{m}\delta_{a_j}\left(\frac{V(a_j)}{\widetilde{Q}(a_j)}\right)^{1/k}+\frac{1}{(z-a_1)\cdot...\cdot(z-a_m)}\frac{\partial}{\partial\bar{z}}\left(\frac{V(z)}{\widetilde{Q}(z)}\right)^{1/k},\]
where $\delta_{a_j}$ is a point mass located at $z=a_j$. Proposition
\ref{prop:support} below states that the positive measure
$\widetilde{\mu}:=\frac{\partial}{\partial\bar{z}}\left(\frac{V(z)}{\widetilde{Q}(z)}\right)^{1/k}$
must be supported on a finite union of analytic curves.  If
$\C_{\mu}^+$ and $\C_{\mu}^-$ denotes the one-sided limits of the
Cauchy transform as we approach the support of $\mu$, then the Sokhotsky-Plemelj formula tells us that $\tilde{\mu}=\rho(s)ds$ where
$\rho(s)=\frac{(\C_{\mu}^+(s)-\C_{\mu}^-(s))}{2}\nu$, with $\nu$ a properly oriented normal of the support, and $ds$ is the
usual arc length. Clearly $\rho$ vanishes only at the roots of
$V(z)$. From this it immediately follows that the point masses
$\delta_{a_j}$ can not lie in the closure of the support of
$\tilde{\mu}$, since the total mass of $\mu$ is assumed to be
finite.\\
\\
Our goal in this section is to prove that the support of a positive
measure $\mu$ whose Cauchy transform satisfies (\ref{eq:main})
is a (curvilinear) forrest. By the above remarks we can restrict our
attention to measures whose Cauchy transform satisfies
(\ref{eq:main}) with $Q(z)$ only having roots of multiplicity $<k$.
The reader will easily check the validity of all the arguments given
below if we add a finite number of isolated point masses to the
measure $\mu$.
\\

\begin{prop+}\label{prop:support} If  $\mu$ is a probability measure satisfying (\ref{eq:main}) almost everywhere then
the support $\frak S_{\mu}$ of $\mu$ is the union of finitely many
smooth curve segments, and each of these curves is mapped to a
straight line by the (locally defined) mapping
$$\Psi(z)=\int_{z_{0}}^z \root {k} \of {\frac {V(t)}{Q(t)}}dt.$$
\label{pr:1}
     \end{prop+}

   \begin{rema+} Function $\Psi(z)$ is often referred to as  a canonical coordinate.   Proof of this Proposition repeats more or less literally that of Lemma~4 in  \cite {BR} and is included for the sake of completeness.  We need the following technical statement.
     \end{rema+}

   \begin{lemma}[Corollary 2 of \cite {BR}]\label{lm:finiteset}
 For a finite set $A\subset
 \bC$, a convex domain $U$ and a measurable function $\chi:U\to A$
 the claim that $\frac{\partial \chi}{\partial \bar z}\ge 0$ (as a
 distribution) is equivalent to the existence of real numbers $c_{a},
 \; a\in A$ such that $\chi(z)=a$ almost everywhere in $G_{a}$ where
 $$G_{a}=\{z\in U; c_{a}+\Re(az)\ge c_{b}+\Re(bz); \forall b \in
 A\}.$$
 In other words, any (local) domain where   $\chi$  attains a constant value is (locally)  an angle given by linear inequalities.
  \end{lemma}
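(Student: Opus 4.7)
The lemma is an if-and-only-if. The $(\Leftarrow)$ direction is a direct Stokes computation; the $(\Rightarrow)$ direction, which extracts the polygonal structure of $\chi$ from the positivity of $\partial\chi/\partial\bar z$, is the substantive part.

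For $(\Leftarrow)$, I would set $\chi\equiv a$ on each convex cell $G_a$ and compute $\partial_{\bar z}\chi$ cell-by-cell via integration by parts. Each shared edge $\Gamma_{ab}=\overline{G_a}\cap\overline{G_b}$ lies on the line $\{\Re((a-b)z)=c_b-c_a\}$, whose unit tangent $\tau$ is perpendicular to $\overline{b-a}$. Summing the boundary contributions from a test function, one finds $\partial_{\bar z}\chi|_{\Gamma_{ab}}=\tfrac{b-a}{2i}\,dz|_{\Gamma_{ab}}$. Inserting $\tau=i\overline{b-a}/|b-a|$ (the orientation with $G_a$ to the left) reduces this to $\tfrac{|b-a|}{2}\,ds$, so $\partial_{\bar z}\chi$ is a nonnegative measure supported on the $1$-skeleton of the decomposition.

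For $(\Rightarrow)$, I would build a real Lipschitz potential $f:U\to\bR$ with $\chi=2\partial_z f$ and read off the $c_a$ from $f$. Positivity makes $\partial_{\bar z}\chi$ in particular a real distribution, so the 1-form $\omega=\Re\chi\,dx-\Im\chi\,dy$ is distributionally closed on the simply connected $U$, producing a Lipschitz primitive $f$ with $df=\omega$. The identity $\tfrac12\Delta f=\partial_{\bar z}\chi\geq 0$ makes $f$ subharmonic, and $\chi\in A$ a.e.\ forces $\nabla f\in\{(\Re a,-\Im a):a\in A\}$ a.e. On each connected component $V$ of $\mathrm{int}\{\chi=a\}$, $f$ coincides with the affine function $\Re(az)+c_V$. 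Continuity of $f$ forces two adjacent components $V_a,V_b$ to meet along a segment of the line $\{\Re((a-b)z)=c_{V_b}-c_{V_a}\}$, whose unit normal $n$ is parallel to $\overline{b-a}$. The distributional Hessian of $f$ concentrated there equals $\pm|b-a|\,(n\otimes n)\,\delta_L$; its trace being $\geq 0$ (subharmonicity) fixes the positive sign, so the Hessian is positive semidefinite as a matrix-valued distribution (zero on each affine piece and rank-one PSD on each boundary segment). Hence $f$ is convex. A final convex-analysis step shows that different components sharing the gradient $(\Re a,-\Im a)$ must share a single intercept $c_a$: for $z_j\in V_a^{(j)}$, the two convex inequalities $f(z_2)-f(z_1)\geq\Re(a(z_2-z_1))$ and its reverse force equality, hence $c_{V_a^{(1)}}=c_{V_a^{(2)}}$. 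Therefore $f(z)=\max_{a\in A}(c_a+\Re(az))$, and $\chi=a$ exactly on the cell $G_a$ where this maximum is realized by the $a$-th affine piece.

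The main obstacle is justifying that the level sets of $\chi$ have piecewise-linear boundaries along the prescribed lines, not arbitrary rectifiable arcs. The input is that $\chi$ is bounded with $\partial_{\bar z}\chi$ a finite positive measure on compacts, so $\chi\in BV_{\mathrm{loc}}$ and its jump set is $\mathcal H^1$-rectifiable; the structural identity $\partial_{\bar z}\chi|_\Gamma=\tfrac{b-a}{2i}\,dz|_\Gamma$ on any smooth boundary arc then forces $\arg\tau=\arg(i\overline{b-a})$ for the tangent, pinning $\Gamma$ to a line of the required form. With this rigidity in hand the convexity argument and common-intercept step are essentially routine.
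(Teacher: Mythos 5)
The paper does not actually prove this lemma: it is imported verbatim as Corollary~2 of \cite{BR}, with only the remark that it rests on Corollary~1 of \cite{BR} (a subharmonic function $v$ on a convex domain with $2\partial v/\partial z$ lying in a finite set a.e.\ is the maximum of finitely many affine functions, hence convex). Your construction of the Lipschitz potential $f$ with $\chi=2\partial_z f$ and $\tfrac12\Delta f=\partial_{\bar z}\chi\ge 0$ is exactly the intended reduction, your $(\Leftarrow)$ direction is fine, and the final common-intercept argument via the two convexity inequalities is correct.

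There is, however, a genuine gap in the $(\Rightarrow)$ direction, precisely at the point you yourself flag as ``the main obstacle.'' You propose to obtain the cell structure from the claim that, since $\partial_{\bar z}\chi$ is a locally finite positive measure and $\chi$ is bounded, $\chi\in BV_{\mathrm{loc}}$. That implication does not hold: writing $\chi=P+iQ$, positivity of $\partial_{\bar z}\chi$ only makes the two combinations $P_x-Q_y$ and $P_y+Q_x$ measures, not the four partials individually; recovering $\partial_z\chi$ from $\partial_{\bar z}\chi$ goes through the Beurling transform, which does not map measures to measures. Without $BV$ there is no rectifiable jump set, and the sets $\{\chi=a\}$ could a priori have positive measure and empty interior, so the whole chain ``components of $\mathrm{int}\{\chi=a\}$ are affine pieces of $f$, adjacent pieces meet along segments, the singular Hessian is rank-one positive, hence $f$ is convex'' has nothing to stand on. Note that the conclusion of the lemma itself implies $\chi$ is piecewise constant on convex polygons and hence $BV$, so proving $BV$ first is essentially as hard as the lemma; the logical order has to be reversed. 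That is what \cite{BR} does: convexity of the potential is established first, directly from subharmonicity together with the finiteness of the a.e.\ gradient image and without any a priori structure of the level sets, and the polygonal decomposition into the $G_a$ is read off afterwards. With convexity of $f$ supplied by that route, your concluding steps (common intercepts, $f=\max_a(c_a+\Re(az))$, and $\chi=a$ a.e.\ on $G_a$) go through unchanged.
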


  \begin{rema+}  Proof of  Lemma~\ref{lm:finiteset} is based on Corollary 1 of \cite {BR}
         claiming that for a convex domain $U\subset\bC$, a finite set
         $A\subset \bC$ and a subharmonic function $v$ defined in $U$
         such that $2\frac{\partial v}{\partial z}\in A$ almost
         everywhere one has  that $v$ coincides with  the maximum of a number of linear (non-homogeneous) functions and is, therefore, convex.      \end{rema+}

  \begin{proof}[Proof of  Proposition~\ref{pr:1}]

 Let us first prove it  in a neighborhood of
 $z_{0}\in \frak S_{\mu}$ which is neither a zero nor a pole of
 $\frac{V(z)}{Q(z)}$. Choose a branch $B(z)$ of ${\root {k} \of{ \frac {{V(z)}}{Q(z)}}}$
         in some simply connected neighborhood $\Omega$ of $z_{0}$
         and define $\Psi$ as some concrete primitive function of $B(z)$
         in $\Omega$. Let $U$ be some small convex neighborhood of
         $\Psi(z_{0})$ such that $\Psi$ maps some neighborhood of
         $z_{0}$ bijectively on $U$. By (\ref{eq:main}) we can write
         $$C_{\mu}(z)=\chi(\Psi(z))B(z)$$ for $z\in \Psi^{-1}(U)$, where
         $\chi$ takes values in the set of $k$-th roots of unity.
     Using
         the variable $w=\Psi(z)$ one gets
         $$\pi\mu=\frac{\partial \C_{\mu}}{\partial \bar
         z}=\frac{\partial \chi(\Psi(z))}{\partial \bar
         z}B(z)=\Psi^{*}\left(\frac{\partial \chi}{\partial \bar
         w}\right)\cdot \frac {\overline{\partial \Psi}}{\partial z}\cdot
         B(z)=\Psi^{*}\left(\frac{\partial \chi}{\partial \bar
         w}\right)\cdot \vert B(z)\vert^2,$$
         where $\Psi^{*}$ denotes the pullback of distributions defined
         in $U$ by the map $\Psi$. Since the measure $\mu$ is positive one gets the relation
         $\frac{\partial \chi}{\partial \bar w}\ge 0$ which should be interpreted in the distributional sense.
         By the above mentioned Corollary 2 $U$ is the union of sets
         $G_{a}$ whose boundaries are finite unions of line segments,
         such that $\chi$ is constant in each $G_{a}$. Therefore, $\frak
         S_{\mu}\cap \Psi^{-1}(U)=\Psi^{-1}\left(\text{supp }
         \frac{\partial\chi}{\partial \bar z}\right)$ is the union of
         finitely many curve segments which are mapped to straight lines
         by $\Psi$.

         If $z_{0}$ is a zero or a pole of $\frac {V(z)}{Q(z)}$, we take a disk $D$
         centered at $z_{0}$ and not containing any other zeros or poles
         of $\frac{V(z)}{Q(z)}$.
         If $\gamma$ is any ray emanating from $z_{0}$ we can define
         single-valued branches of ${\root {k}  \of {\frac {V(z)}{Q(z)}}}$
         and $\Psi$ in $D\setminus \gamma$. Notice that $\Psi$ is
         continuous up to $z_{0}$. Let $U$ be any half disc centered at
         $\Psi(z_{0})$ and contained in $\Psi(D\setminus \gamma)$. Then
         by the first part of the proof $\frak S_{\mu}$ has the required
         properties in $\Psi^{-1}(D)$. Varying $\gamma$ and $U$, we see
         that the same holds in a full neighborhood of $z_{0}$.   \end{proof}Ê

\begin{lemma} Suppose that the Cauchy transform $\CC_{\mu}(z)$ of $\mu$
         satisfies (\ref{eq:main}) and $u(z)$ is the logarithmic
         potential of $\mu$.  If $\Psi^{-1}$ is a (locally defined)
         inverse of $\Psi$ which is a primitive to
         ${\root {k} \of{ \frac {{V}}{Q}}}$, then the function $u\circ
         \Psi^{-1}$ coincides with the maximum of some number of linear functions and, in particular,
         is  convex (in its domain).
         \label{lm:hans6}
         \end{lemma}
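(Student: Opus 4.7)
The plan is to reduce this lemma to Corollary 1 of \cite{BR} (the one quoted in the remark after Lemma~\ref{lm:finiteset}), which states that any subharmonic function $v$ on a convex domain $U\subset\bC$ with $2\,\partial v/\partial w \in A$ almost everywhere, for some finite set $A\subset\bC$, must coincide with the maximum of finitely many affine functions. I will apply this statement with $v = u\circ \Psi^{-1}$ and $U$ a small convex piece of the image of $\Psi$.

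First I would fix a base point $z_{0}$ which is neither a zero nor a pole of $V/Q$, take the branch $B(z)=\sqrt[k]{V(z)/Q(z)}$ and the primitive $\Psi$ of $B$ used in the proof of Proposition~\ref{pr:1}, and let $U$ be a small convex neighborhood of $\Psi(z_{0})$ on which $\Psi^{-1}$ is a well-defined holomorphic biholomorphism. Subharmonicity of $\tilde u := u\circ \Psi^{-1}$ on $U$ comes for free: $u$ is the logarithmic potential of a positive measure and hence subharmonic on $\bC$, and composition of a subharmonic function with a holomorphic map preserves subharmonicity.

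The heart of the proof is the chain-rule identification of $2\,\partial \tilde u/\partial w$. Using the standard relation $2\,\partial u/\partial z = \CC_{\mu}(z)$ together with $(\Psi^{-1})'(w) = 1/B(z)$ at $z=\Psi^{-1}(w)$, one gets
\[
2\,\frac{\partial \tilde u}{\partial w}(w) \;=\; \frac{\CC_{\mu}(z)}{B(z)}.
\]
The representation $\CC_{\mu}(z) = \chi(\Psi(z))\,B(z)$ established at the start of the proof of Proposition~\ref{pr:1} now shows that this ratio equals $\chi(w)$, which takes values in the finite set $A$ of $k$-th roots of unity. Thus the hypotheses of Corollary 1 of \cite{BR} are verified on $U$, so $\tilde u$ coincides there with the maximum of finitely many affine functions of $w$ and is in particular convex.

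What remains is the behavior near a zero or pole of $V/Q$, where $\Psi$ fails to be a local biholomorphism and single-valued branches of $B$ and $\Psi$ only exist after removing a ray. I would handle this exactly as in the last paragraph of the proof of Proposition~\ref{pr:1}: apply the preceding argument on each slit sector and then rotate the slit to cover a full punctured neighborhood, taking care that the branch of $B$ used to define $\Psi$ is the same branch appearing in $\CC_{\mu}=\chi B$ and that $\chi$ transforms compatibly when the slit is moved. This branch bookkeeping, which is essentially the only technical point, is the main obstacle; once it is in place, the conclusion follows from the three-line chain-rule computation above.
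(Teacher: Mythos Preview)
Your proposal is correct and follows essentially the same approach as the paper: both fix a branch $B$ and primitive $\Psi$ near a point that is neither a zero nor a pole of $V/Q$, note that $u\circ\Psi^{-1}$ is subharmonic, carry out the same chain-rule computation $2\,\partial(u\circ\Psi^{-1})/\partial w=\CC_\mu/B=\chi(w)$, and invoke Corollary~1 of \cite{BR}. Your final paragraph about zeros and poles of $V/Q$ is actually superfluous here, since the hypothesis that $\Psi^{-1}$ exists as a local inverse already forces $B\neq 0,\infty$; the paper accordingly omits that discussion.
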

\begin{proof}
        Since $u$ is subharmonic
         we  need to check that its derivative belongs to a finite set.
         We prove our lemma in a neighborhood of any $z_{0}$ which is neither a zero of $V(z)$ nor a zero
         of $Q(z)$. Choose a branch $B(z)$ of ${\root {k} \of{ \frac {{V(z)}}{Q(z)}}}$
         in some simply connected neighborhood $\Omega$ of $z_{0}$
         and define $\Psi(z)$ as some concrete primitive function of $B(z)$
         in $\Omega$. Let $U$ be some small convex neighborhood of
         $\Psi(z_{0})$ such that $\Psi(z)$ maps some neighborhood of
         $z_{0}$ bijectively on $U$. We want to show that $u\circ
         \Psi^{-1}$ is convex in $U$. By (\ref{eq:main}) we can write
         $$C_{\mu}(z)=\chi(\Psi(z))B(z)$$ for $z\in \Psi^{-1}(U)$, where
         $\chi$ takes values in the set of $k$-th roots of unity. Since
         $\CC_{\mu}(z)=2\frac{\partial u}{\partial z}$ we have  using
         the variable $w=\Psi(z)$

    $$2\frac{\partial}{\partial w}u\left(\Psi^{-1}(w)\right)=2\frac
    {\partial u}{\partial
    z}\left(\Psi^{-1}(w)\right)\left(B\left(\Psi^{-1}(w)\right)\right)^{-1}=
    \CC_{\mu}(\Psi^{-1}(w))
    \left(B\left(\Psi^{-1}(w)\right)\right)^{-1}=\chi(w).$$

    Therefore by the above mentioned Corollary 1 the locally defined
    function $u\circ\Psi^{-1}$ is piecewise linear and convex.

     \end{proof}

\begin{Cor}\label{cor:endpoints} If an  endpoint of any curve segment in the support $ \frak S_{\mu}$ of
$\mu$ is a hanging vertex (i.e. not shared by any other such
segment) then this endpoint is either a zero or a pole of
$\frac{V(z)}{Q(z)}$, see Figure~2.
\end{Cor}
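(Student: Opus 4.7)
The plan is to argue by contraposition in the canonical coordinate $w=\Psi(z)$: I will show that any point $z_0\in\mathfrak{S}_\mu$ which is neither a zero nor a pole of $V(z)/Q(z)$ cannot be a hanging endpoint of a segment of the support.

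First I would choose a branch $B(z)$ of $\sqrt[k]{V(z)/Q(z)}$ and a primitive $\Psi(z)$ of $B(z)$ in a simply connected neighborhood of $z_0$, exactly as in the proof of Proposition~\ref{pr:1}. Since $V/Q$ is nonzero and holomorphic at $z_0$, the map $\Psi$ is a local biholomorphism sending a neighborhood of $z_0$ onto a convex neighborhood $U$ of $w_0=\Psi(z_0)$. The measurable function $\chi(w)$ from that proof takes values in the finite set of $k$-th roots of unity, satisfies $\partial\chi/\partial\bar w\geq 0$ in the distributional sense, and has the property that $\Psi(\mathfrak{S}_\mu\cap\Psi^{-1}(U))$ coincides with the support of $\partial\chi/\partial\bar w$.

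Next I would invoke Lemma~\ref{lm:finiteset}, which describes $\chi$ as piecewise constant on angular regions $G_a$ bounded by line segments; those segments constitute exactly the support of $\partial\chi/\partial\bar w$. Now suppose for contradiction that $w_0$ is a hanging endpoint, so that exactly one line segment $\ell$ of the support terminates at $w_0$. Take a small open disk $D$ centered at $w_0$ that meets no other segment of the support. The set $D\setminus\ell$ is then connected, since one can pass around the tip of $\ell$. Because $\chi$ is locally constant on the complement of its jump locus and $D\setminus\ell$ is disjoint from that locus, $\chi$ must take a single constant value $a$ throughout $D\setminus\ell$. Consequently the one-sided limits of $\chi$ from the two sides of $\ell$ near $w_0$ coincide, so $\partial\chi/\partial\bar w$ vanishes in a neighborhood of $w_0$, contradicting $w_0\in\operatorname{supp}(\partial\chi/\partial\bar w)$.

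The main obstacle is simply the careful bookkeeping in passing between the $z$- and $w$-coordinates and in identifying $\mathfrak{S}_\mu$ locally with the jump locus of $\chi$; once Proposition~\ref{pr:1} and Lemma~\ref{lm:finiteset} are in hand, the connectedness of $D\setminus\ell$ closes the argument and singles out zeros and poles of $V(z)/Q(z)$ as the only possible locations of hanging endpoints.
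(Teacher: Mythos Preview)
Your proof is correct and follows essentially the same idea as the paper's: at a hanging vertex that is not a zero or pole of $V/Q$, one can pass from one side of the segment to the other without crossing the support, forcing the one-sided limits of the Cauchy transform to agree and thus removing the point from $\mathfrak S_\mu$. The paper phrases this directly in the $z$-plane as the absence of monodromy of $\mathcal C_\mu$ around $p$ (since the only branch points of the algebraic equation $\mathcal C_\mu^k=V/Q$ are the zeros and poles), whereas you carry it out in the canonical coordinate via the connectedness of $D\setminus\ell$ and the constancy of $\chi$; these are two presentations of the same argument.
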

\begin{proof}ÊIf this were false, then take a point $p$
which is a hanging vertex but not a zero of either $V(z)$ or
$Q(z)$. The Cauchy transform $\CC_{\mu}$ is supposed to satisfy an
algebraic equation whose branching points are exactly the zeroes of
$V(z)$ and $Q(z)$. In particular it has no monodromy around $p$.
This implies that the limits of $\CC_{\mu}$ as we approach $\frak
S_{\mu}$ from both sides close to $p$ are the same, which in turn
implies that $p$ lies off the support of $\mu$.
\end{proof}

Proof of Theorem~\ref{th:charac} Êstarts with a series of additional
observations.  Notice that if we fix a branch of  $ \root {k} \of
{\frac {V(z)}{Q(z)}}$ locally near some point which is neither its
root or its pole and consider  a multi-valued  canonical
coordinate
$$\Psi(z)= \int_{z_{0}}^z \root {k} \of {\frac {V(t)}{Q(t)}}dt$$
globally i.e. take its full analytic extension then $\Psi(z)$ will
be  well-defined and univalent only on the universal covering of
$\bC\setminus (\bZ(V)\cup \bZ(Q))$ where $\bZ(V)$ (resp. $\bZ(Q)$)
is the set of all roots of $V(z)$ (resp. $Q(z)$).  But  due to the
existence of a measure $\mu$ we can choose an almost  global
representative of  $\Psi(z)$ on $\bC$ substantially reducing its
multi-valuedness.  Namely,  let $\Omega$ be the complement to the
support of $\mu$, i.e. $\Omega=\bC\setminus \frak S_\mu$.  Define
$\Psi^+(z)$ in $\Omega$ by
     $$\Psi_+(z)=\int_{\bC}\log (z-\zeta)d\mu(\zeta).$$ Obviously, $\Psi_+(z)$ is a part of the whole multi-valued function $\Psi(z)$ since for any $z\in \Omega$ one has $\Psi_+'(z)=\int_{\bC}\frac {d\mu(\zeta)}{z-\zeta}=\C_\mu(z)$ while $\C_\mu(z)$ satisfies \eqref{eq:main}.  Since $\Omega$ is never simply-connected the function $\Psi_+(z)$ is still multi-valued, namely, going once around some connected component $K$ of $\frak S_\mu$ in the clockwise direction  one increases  $\Psi_+(z)$ by $2\pi i \times \mu(K)$, where $\mu(K)>0$ is the mass of the measure $\mu$ concentrated on $K$.  Nevertheless the real part $\Re(\Psi_+(z))$ is a well-defined single-valued function in $\Omega$ coinciding with the logarithmic potential $u(z)$ of $\mu$, namely
     $$u(z)= \Re(\Psi_+(z))=\int_\bC\log\vert z-\zeta\vert d\mu(\zeta).$$

     Consider now the  family $\Phi$ of curves in $\Omega$ defined by the condition: 
     $$\Im(\Psi_+(z))=const$$
which  is obviously independent of the choice of the branch of
$\Psi_+(z)$.  One can easily show that the gradient $\text{grad } u(z)$ of $u(z)$ coincides with  $\overline{\C}_\mu(z)$. (Here
$\overline{\C}_\mu(z)$ is the usual complex conjugate of
${\C}_\mu(z)$.) Thus the family $\Phi$ consists of the integral
curves of the vector field $\overline{\C}_\mu(z)$ which is
well-defined and non-vanishing in  $\Omega$. Moving along the
trajectories of $\overline{\C}_\mu(z)$ in positive time we increase
the value of $u(z)$. Finally, for sufficiently large $\vert z \vert$
one has
$$\overline{\C}_\mu(z)\simeq \frac{1}{\bar z},\text {  and  } u(z)\simeq \log\vert z\vert.$$

The next statement is very crucial for the proof of
Theorem~\ref{th:charac}.

     \begin{lemma}\label{lm:framback}  1) Any trajectory of $\overline{\C}_\mu(z)$ tends when $t\to +\infty$   
      either to $\infty$ in $\bC P^1$ or to a root of $V(z)$.

\noindent 2) Any trajectory  of $-\overline{\C}_\mu(z)$  tends to  $\frak
S_\mu$ or, in other words,  any trajectory of $\overline{\C}_\mu(z)$  'starts'  on $\frak
S_\mu$.
\end{lemma}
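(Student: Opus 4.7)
The plan is to exploit the fact that the ODE $\dot z=\overline{\C}_\mu(z)$ is, in real coordinates $z=x+iy$, exactly the gradient flow of the logarithmic potential $u(z)=\Re\Psi_+(z)=\int_\bC\log|z-\zeta|\,d\mu(\zeta)$: indeed $\overline{\C}_\mu=\partial_x u+i\partial_y u$ is the real gradient of $u$. Three standing facts will carry the argument: (a) along any forward orbit $\dot u=|\C_\mu|^2\ge 0$, with equality only at the zeros of $V$; (b) $u(z)=\log|z|+O(|z|^{-1})$ at infinity, so sublevel sets of $u$ are bounded; (c) $u$ is continuous on $\bC$ and real-analytic on $\Omega$ with isolated critical points (the zeros of $V$).

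For Part~1 I would take a maximal forward orbit $\gamma\subset\Omega$ and dichotomize on whether $u(\gamma(t))$ tends to $+\infty$ or stays bounded. In the first case, (b) forces $|\gamma(t)|\to\infty$, i.e.\ $\gamma(t)\to\infty$ in $\bC P^1$. In the second, (b) traps $\gamma$ in a compact set; the identity $\int_{t_0}^\infty|\C_\mu|^2 dt=\lim u(\gamma)-u(\gamma(t_0))<\infty$ yields a sequence $t_n\to\infty$ along which $\C_\mu(\gamma(t_n))\to 0$, i.e.\ $\gamma(t_n)$ approaches a zero of $V$; standard gradient-flow arguments (Lojasiewicz, or simply that the $\omega$-limit set is compact, connected, and contained in the set of critical points of $u$, which is isolated in $\Omega$) upgrade this to full convergence to a single zero of $V$. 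Two spurious behaviors still need to be excluded. Accumulation at a zero of $Q$ of order $j<k$ is ruled out by the estimate $|\overline{\C}_\mu|\asymp|z-z_0|^{-j/k}$: the parameterized time to reach $z_0$ is finite, the level curve $\{\Im\Psi_+=c\}$ is smooth at $z_0$ in the canonical coordinate, so the orbit passes through $z_0$ with $u$ strictly increasing past $u(z_0)$. Accumulation at an interior point $p$ of $\frak S_\mu$ is ruled out by noting that the one-sided limits $\nabla u^\pm(p)=\overline{\C_\mu^\pm(p)}$ are non-zero (since $p$ is not a zero of $V$); invariance of the $\omega$-limit set under the flow would then force it to contain a short arc through $p$ in the direction $\nabla u^+(p)$, on which $u$ would be constant, hence $\nabla u\equiv 0$, contradicting $\nabla u^+(p)\neq 0$.

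For Part~2 I reverse time; now $u$ strictly decreases along the orbit, so (b) prohibits escape to infinity and the orbit stays inside a compact subset of $\bC$. The same analysis shows the $\alpha$-limit set $L$ either meets $\frak S_\mu$ (the desired conclusion) or consists of a single zero $z_0$ of $V$. To exclude the latter I would use the local expansion $\Psi(z)-\Psi(z_0)=c(z-z_0)^{(k+j)/k}(1+o(1))$ at a zero of order $j$: the critical level set $\{\Im\Psi_+=\Im\Psi_+(z_0)\}$ consists near $z_0$ of finitely many separatrix rays, and any backward orbit with $\alpha$-limit $\{z_0\}$ must coincide near $z_0$ with one of them. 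Proposition~\ref{pr:1} together with Corollary~\ref{cor:endpoints}, applied at $z_0$, then force every such critical separatrix to be part of $\frak S_\mu$; since $\gamma\subset\Omega$, this is a contradiction.

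The main obstacle is precisely this last step: verifying that $\frak S_\mu$ exhausts the set of critical horizontal trajectories at each zero of $V$, so that no separatrix slips into $\Omega$ and provides a backward orbit with an isolated-critical-point $\alpha$-limit. Everything else reduces to standard gradient-flow manipulations combined with the two asymptotic facts $u\sim\log|z|$ at infinity and $\nabla u=\overline{\C}_\mu$ on $\Omega$.
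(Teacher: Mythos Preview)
Your proof shares the paper's gradient-flow core---$\overline{\C}_\mu=\nabla u$, $u$ strictly increases along forward orbits, $u\sim\log|z|$ near $\infty$---but you work harder than necessary, and the obstacle you single out is a red herring.

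The paper handles both parts simultaneously and much more briefly. First, Lemma~\ref{lm:hans6} (piecewise-linear convexity of $u\circ\Psi^{-1}$) shows directly that near every point of $\frak S_\mu$ the gradient $\nabla u$ points \emph{away} from $\frak S_\mu$, except at roots of $V$ where it vanishes; and $\overline{\C}_\mu\sim 1/\bar z$ makes $\infty$ a sink. Second, let $K_\epsilon$ be $\bCP^1$ minus an $\epsilon$-neighbourhood of $\{\infty\}\cup\frak S_\mu$. Since $\C_\mu$ is holomorphic on $\Omega$ and satisfies $\C_\mu^k=V/Q$, every zero of $V$ or of $Q$ is a branch point of $(V/Q)^{1/k}$ and therefore lies on $\frak S_\mu$; hence $K_\epsilon$ contains \emph{no} singular points of $\overline{\C}_\mu$. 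A gradient field on such a compact set has no closed orbits and no recurrence, so every trajectory eventually leaves $K_\epsilon$. Forward, it is either captured by the sink at $\infty$ or, being repelled from $\frak S_\mu$, can only accumulate at a root of $V$; backward, it cannot reach $\infty$ (since $u$ would have to increase) and so tends to $\frak S_\mu$. That is the whole proof.

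Your separate case analyses---accumulation at interior points of $\frak S_\mu$ via one-sided gradient limits, at roots of $Q$ via an integrability estimate, and the separatrix classification at roots of $V$---are all subsumed by the single observation that $K_\epsilon$ is singularity-free. In particular, you never need to show that ``$\frak S_\mu$ exhausts the critical horizontal trajectories'' at a zero of $V$: once you know such a zero already lies on $\frak S_\mu$, any backward orbit whose $\alpha$-limit is that point automatically tends to $\frak S_\mu$, and Part~2 is finished. Your claim that a forward orbit ``passes through'' a root of $Q$ is also unnecessary (and slightly off, since such roots sit on the boundary $\frak S_\mu$ rather than in $\Omega$); the repelling property from Lemma~\ref{lm:hans6} already rules out forward accumulation there.
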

\begin{proof}
According to lemma \ref{lm:hans6}  
locally  near any point $p\in\frak S_{\mu}$   the logarithmic potential $u$ is given as the maximum of a finite number of linear functions if one uses the canonical coordinate. This shows that the
gradient flow points away from the support of $\mu$ everywhere,
except possibly at a root of $V(z)$ where the gradient  vanishes. At
infinity we have $\overline{\C}_\mu(z)\approx \frac{1}{\bar{z}}$,
which easily implies that the point at infinity is a sink for the
flow defined by $\overline{\C}_\mu(z)$. Denote by  $K_{\epsilon}$
 the Riemann sphere with an $\epsilon$-neighbourhood of
$\{\infty\}\cup\frak S_{\mu}$ removed.  To obtain  the
proof we need to show that a trajectory cannot stay in a
subset of $K_{\epsilon}$ for any $\epsilon>0$ when $t\to +\infty$. By construction any such
$K_{\epsilon}$  does not contain singular points of
$\overline{\C}_\mu(z)$. Thus,  we only need to rule out the occurrence of
closed trajectories and recurrence in $K_{\epsilon}$. But both of
these possibilities are indeed forbidden  by the fact that
$\overline{\C}_\mu(z)$ is the gradient field of a function without singularities 
in $K_{\epsilon}$ for any $\epsilon>0$.
\end{proof}Ê
\begin{definition}Ê A trajectory of $\overline{\C}_\mu(z)$ ending at a root of $V(z)$ is called {\em exceptional}.
\end{definition}Ê
\begin{lemma}Ê\label{lm:1traj}
There exists  at most a finite number of trajectories of
$\overline{\C}_\mu(z)$ ending at a given root of $V$.
\end{lemma}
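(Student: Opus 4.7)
My strategy is to combine the local structure of the support $\frak S_\mu$ near $v$ with the straightening of the gradient flow by the canonical coordinate $\Psi$, and then to count trajectories sector by sector.

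First, the branch-point analysis in the proof of Proposition~\ref{pr:1} shows that, for a sufficiently small disk $D$ centered at $v$, the set $\frak S_\mu\cap D$ is a finite union of smooth arcs. After shrinking $D$ and discarding arcs bounded away from $v$, I am left with some finite number $N$ of arcs emanating from $v$ which cut $D$ into open sectors $S_1,\ldots,S_N$ with common vertex $v$. By Lemmas~\ref{lm:hans6} and \ref{lm:framback}, at every regular point of $\frak S_\mu$ the gradient field $\overline{\C}_\mu$ points strictly away from $\frak S_\mu$; a forward orbit of $\overline{\C}_\mu$ therefore cannot cross $\frak S_\mu$, so any orbit converging to $v$ stays, for all sufficiently large $t$, in exactly one sector $S_i$.

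Fix such a sector $S_i$. Being simply connected and containing no zero of $B=(V/Q)^{1/k}$, it admits a single-valued branch of $B$ together with a primitive $\Psi$, which by the continuity assertion in the proof of Proposition~\ref{pr:1} extends to $v$; I normalize $\Psi(v)=0$. On $S_i$ equation \eqref{eq:main} gives $\C_\mu = \chi_i B$ for a constant $k$-th root of unity $\chi_i$, and in the coordinate $w = \Psi(z)$ the ODE $\dot z = \overline{\C_\mu(z)}$ becomes
\[
\dot w \;=\; B(z)\,\overline{\chi_i\,B(z)} \;=\; \overline{\chi_i}\,|B(z)|^2.
\]
Every orbit in $w$-coordinates is therefore contained in a straight line of the fixed direction $\overline{\chi_i}$, and since $\dot w$ is a positive multiple of $\overline{\chi_i}$, only the ray $\{-s\,\overline{\chi_i}:s>0\}$ can approach $0$ forward in time. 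Near $v$ the map $\Psi$ is a finite branched cover of its image (locally $w\sim C(z-v)^{(m+k)/k}$, with $m$ the order of $v$ as a zero of $V$), so the preimage in $S_i$ of this single ray is a finite union of arcs, each supporting exactly one orbit converging to $v$. Summing over $S_1,\ldots,S_N$ gives the claimed finiteness.

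The only step requiring genuine care is the counting inside a single sector: when $v$ is a higher-order zero of $V$ the local degree of $\Psi$ near $v$ exceeds one, so one must verify that the preimage of a single ray is still finite. The ramified change of variable $\zeta=(z-v)^{1/k}$ converts $\Psi$ at the origin into the holomorphic germ $C\zeta^{m+k}$, whose fibres are obviously finite, making this verification routine. All other steps follow by assembling structural results already established in the paper.
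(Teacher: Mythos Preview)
Your argument is correct and rests on the same computation as the paper's: both reduce to the local normal form $\Psi(z)\sim C(z-v)^{(m+k)/k}$ near $v$ and read off that only finitely many directions can carry a trajectory into $v$. The paper does this more directly---it introduces a local coordinate $\xi$ in which $(V/Q)^{1/k}=\xi^{m/k}$, integrates to $\Psi=\tfrac{m+k}{k}\,\xi^{(m+k)/k}$, and notes that $\Im\bigl(\xi^{(m+k)/k}\bigr)=0$ holds along only finitely many $\xi$-rays---whereas your sector decomposition together with the straightening $\dot w=\overline{\chi_i}\,|B|^2$ is the same idea with the multivaluedness of $(V/Q)^{1/k}$ handled more explicitly. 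What your version buys is a cleaner justification that $\chi$ is constant on each piece; what the paper's buys is brevity.

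One small omission: your construction of the sectors $S_1,\dots,S_N$ tacitly assumes $N\ge 1$, i.e.\ that at least one arc of $\frak S_\mu$ emanates from $v$. If $v$ lies off $\frak S_\mu$ (which forces $k\mid m$, since otherwise $\C_\mu$ would acquire monodromy around $v$), the punctured disk is not simply connected and your simple-connectivity argument for the existence of $B$ does not apply as stated. The fix is immediate: in that case $\C_\mu$ itself is the required single-valued branch on the whole punctured disk, $\Psi$ is holomorphic with a zero of order $m/k+1$ at $v$, and the same ray count goes through.
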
Ê

\begin{proof}Ê
Wlog assume that  a root of $V(z)$ lies at
the origin and has a multiplicity $p$. Notice that
\[\Im(\Psi_+(z))=\Im\int_{0}^z\sqrt[k]{\frac{V(t)}{Q(t)}}dt.\]
We now have
$\sqrt[k]{\frac{V(z)}{Q(z)}}=z^{p/k}(a_p+a_{p+1}z+...)^{1/k}$.
Choosing a single-valued branch of $(a_p+a_{p+1}z+...)^{1/k}$, say
$b_0+b_1z+...$, and taking $\xi=z(b_0+b_1z+...)^{k/p}$, which can 
 be chosen single-valued in a neighbourhood of the origin, we
get $\sqrt[k]{\frac{V(z)}{Q(z)}}=\xi^{p/k}$ which is valid in a
neighbourhood of the origin minus the slit defined by the support of
our measure $\mu$. Integration yields
\[\int_{0}^{z}\sqrt[k]{\frac{V(t)}{Q(t)}}dt=\left(\frac{p+k}{k}\right)\xi^{(p+k)/k}.\]
Since there is only a finite number of rays in the $\xi$-plane
entering the origin along which $\Im(\xi^{(p+k)/k})\equiv 0$, our claim follows. 
\end{proof}Ê

\begin{rema+}  The proof of the previous lemma  copies  some of local studies of zeroes of quadratic differentials, see \cite {Str}, Ch.2. It is indeed possible to interpret the support of $\mu$ as a geodesic of a higher order differential, which in turn allows us to state more explicitley exactly how many exceptional trajectories enter a hanging vertex  of the support of $\mu$. However we postpone detailed study of this topic, see \cite{HKSh}.
\end{rema+}

\begin{definition}ÊWe denote the  union of $\frak S_\mu$ and all exceptional trajectories by $\Ups_\mu$ and call it the {\em  extended support} of $\mu$.
\end{definition}Ê
The following  statement is very essential. 

\begin{lemma}\label{lm:consimpcon} The extended support 
$\Ups_\mu$ is topologically a tree.
\end{lemma}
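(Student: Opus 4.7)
The plan is to establish that $\Ups_\mu$ is topologically a tree in three steps: first, show that the complement $\Omega':=\bC\setminus\Ups_\mu$ is connected; second, deduce that $\Ups_\mu$ contains no cycles; and third, show that $\Ups_\mu$ is itself connected.

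For the first step I would identify $\Omega'$ with the basin of $\infty$ for the flow $\overline{\CC}_\mu$. By Lemma~\ref{lm:framback}, any forward trajectory of $\overline{\CC}_\mu$ tends either to $\infty$ or to a zero of $V(z)$; a trajectory of the latter kind is exceptional by definition and therefore belongs to $\Ups_\mu$, so any trajectory lying wholly in $\Omega'$ must escape to $\infty$. Near infinity the asymptotics $\overline{\CC}_\mu(z)\approx 1/\bar z$ read, in the chart $\zeta=1/z$, as $\dot\zeta=-|\zeta|^{2}\zeta$, displaying $\infty$ as a (degenerate but) attracting fixed point whose basin contains a full punctured neighborhood. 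Any two points of $\Omega'$ can therefore be joined inside $\Omega'$ by concatenating two forward-flow arcs into this attracting neighborhood with a short path inside it, so $\Omega'$ is path-connected.

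For the second step, acyclicity of $\Ups_\mu$ is immediate from the Jordan curve theorem: a simple closed curve $\gamma\subset\Ups_\mu$ would bound a nonempty open disk disjoint from the unbounded component of $\bC\setminus\gamma$ that contains $\infty$; since $\Omega'\subset\bC\setminus\gamma$ contains a neighborhood of $\infty$, the bounded component would be a piece of $\Omega'$ separated from the piece meeting $\infty$, violating step one. Hence $\Ups_\mu$ has no loops and is a (curvilinear) forest.

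For the third step I would argue connectedness by contradiction. Suppose $\Ups_\mu=L_1\sqcup\cdots\sqcup L_c$ with $c\ge 2$. Alexander duality on $\bCP^1$ applied to $K=\Ups_\mu\cup\{\infty\}$ (which has $c+1$ connected components, since $\infty\notin\Ups_\mu$) yields $\tilde H_1(\Omega')\cong\tilde H^0(K)=c$, so $\Omega'$ carries $c$ independent generating loops, one around each $L_j$, with periods $\oint\CC_\mu\,dz=2\pi i\,\mu(L_j)>0$. To eliminate $c\ge 2$ I would track the backward-limit map $\Phi:\Omega'\to\mathfrak{S}_\mu$ (sending $z$ to the starting point on $\mathfrak{S}_\mu$ of the trajectory through $z$) along a large circle $|z|=R$: by the radial asymptotics of the flow, this sweep visits the "outer boundary" of $\Ups_\mu$ cyclically, and its discontinuities can only occur across exceptional trajectories, whose local count at each zero of $V(z)$ is controlled by Lemma~\ref{lm:1traj}. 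Comparing the cyclic sweep with the putative splitting into $L_1,\ldots,L_c$ should force at least one exceptional trajectory to join two different $L_j$'s, producing the required contradiction. The main obstacle I anticipate is in precisely this last step: rigorously establishing the continuity (and the classification of jumps) of the backward-limit map, which is where the quadratic-differential viewpoint alluded to in the remark after Lemma~\ref{lm:1traj} enters most naturally.
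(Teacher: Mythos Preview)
Your steps 1 and 2 are sound, but step 3 has, as you yourself anticipate, a genuine gap: controlling the backward-limit map $\Phi$ and classifying its discontinuities across exceptional trajectories would require substantial additional work, and nothing in the paper up to this point supplies it.

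The paper sidesteps step 3 entirely by extracting more from the very flow argument you set up in step 1. The forward flow of $\overline{\CC}_\mu$ does not merely join points of $\Omega'$ through a neighborhood of $\infty$; since every forward trajectory in $\bCP^1\setminus\Ups_\mu$ tends to $\infty$ (those tending to zeros of $V$ are exceptional and have been absorbed into $\Ups_\mu$), since the field has no singularities in that domain, and since $\infty$ is a sink, the flow contracts all of $\bCP^1\setminus\Ups_\mu$ to the single point $\infty$. The complement is therefore not just connected but simply connected. Your step 3 is then a one-liner: the same Alexander duality you invoke gives $\tilde H^0(\Ups_\mu)\cong \tilde H_1(\bCP^1\setminus\Ups_\mu)=0$, so $\Ups_\mu$ is connected; together with step 2 (or duality in the other direction) it is a tree. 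In short, the paper's entire argument is your step 1 upgraded from ``path-connected'' to ``contractible'', after which steps 2 and 3 collapse into the observation that a compact graph in the sphere with simply connected complement is a tree.
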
Ê
\begin{proof}
Indeed, by lemmas \ref{lm:framback} and \ref{lm:1traj} the flow   on
$\bCP^1\setminus\Ups_\mu$ defined  by the gradient vector field
$\overline{C}_{\mu}(z)$ (which is non-vanishing there)  contracts the whole domain  to the
point at infinity. The result follows. 
\end{proof}Ê

    Let $\mu$ be a measure whose Cauchy transform
     satisfies (\ref{eq:main}) and denote $\Omega^*=\bC\setminus\Ups_\mu$.
     Define now the following specialization of the function $\Psi_+(z)$ to $\Omega^*$: 
     $$\Psi_{++}(z)=\int_\bC\log(z-\zeta)d\mu(\zeta)$$
     where $z\in\Omega^*.$
     Note that $\Psi_{++}(z)$ is multi-valued only up to addition of multiples of $2\pi i$.

      \begin{lemma} The function $\Psi_{++}(z)$ determines a  mapping of $\Omega^*$ onto a  domain $H=\{w\vert \Re w > h(\Im w)\}$ where $h$ is a picewise-continuous
     function. Moreover,  $\Psi_{++}^{-1}:H\to \Omega^*$ is a single-valued function, see Fig.~3.
         \end{lemma}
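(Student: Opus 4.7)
The plan is to show that $\Psi_{++}$, viewed modulo $2\pi i$, is a bijective conformal map from $\Omega^*$ onto a domain of the claimed form, and then to read off the shape of $H$ from the gradient-flow picture developed in Lemmas~\ref{lm:framback}--\ref{lm:consimpcon}. First I would observe that $\Psi_{++}$ is a local biholomorphism on $\Omega^*$: its complex derivative equals $\C_\mu(z)$, and by \eqref{eq:main} the zeros of $\C_\mu$ are exactly the zeros of $V(z)$. But all such zeros lie in $\Ups_\mu$, being either hanging vertices of $\frak S_\mu$ as in Corollary~\ref{cor:endpoints} or endpoints of exceptional trajectories by definition. Hence $\C_\mu\neq 0$ on $\Omega^*$, so $\Psi_{++}$ is locally conformal, with its $2\pi i\bZ$-valued ambiguity coming solely from winding around $\Ups_\mu$.

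Next I would establish injectivity (modulo $2\pi i$) by exploiting the gradient-flow foliation. The vector field $\overline{\C}_\mu=\nabla u$ is non-vanishing on $\Omega^*$; its trajectories are precisely the level curves of $\Im\Psi_{++}$, and by Lemma~\ref{lm:framback} every such trajectory issues at time $-\infty$ from a point $p_\gamma\in\Ups_\mu$ and escapes at time $+\infty$ to $\infty$. Along each trajectory $\gamma$ the quantity $\Re\Psi_{++}=u$ is strictly increasing from the finite value $u(p_\gamma)$ to $+\infty$. Thus any two points of $\Omega^*$ with the same value of $\Psi_{++}\bmod 2\pi i$ must share a trajectory and have the same value of $u$, so they coincide; this gives a single-valued inverse $\Psi_{++}^{-1}$ on the image $H$, which we regard as an open subset of the cylinder $\bC/2\pi i\bZ$, or equivalently a $2\pi i$-periodic subset of $\bC$ once a branch of $\Psi_{++}$ is fixed.

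Finally I would identify $H$ and verify the regularity of $h$. For each admissible value $\theta$ of $\Im\Psi_{++}$ there is a unique trajectory $\gamma(\theta)$ with that imaginary part, starting from some $p(\theta)\in\Ups_\mu$; setting $h(\theta):=u(p(\theta))$ one has $H=\{w:\Re w>h(\Im w)\}$, since $\gamma(\theta)$ contributes exactly the half-line $\{w:\Im w=\theta,\ \Re w>h(\theta)\}$. As $\theta$ varies within an interval whose trajectories all emanate from the interior of a single edge of $\Ups_\mu$, the map $\theta\mapsto p(\theta)$ is continuous and so is $h$; discontinuities occur precisely at the finitely many values of $\theta$ for which the trajectory issues from a vertex of the tree, and Lemma~\ref{lm:1traj} guarantees that only finitely many exceptional trajectories exit any such vertex. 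This yields a piecewise-continuous $h$ with finitely many jumps per period.

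The hard part will be this last step: one must verify rigorously that the gradient flow parameterizes $\Omega^*$ by $\theta$ without gaps or overlaps and analyze the jump structure of $h$ at vertices of $\Ups_\mu$, where several incident edges compete to supply the ``starting point'' of the trajectory at the critical values of $\theta$. Here I would lean on the local canonical-coordinate description from Proposition~\ref{pr:1} and Lemma~\ref{lm:hans6}, according to which $u\circ\Psi^{-1}$ is piecewise linear and convex; this lets one describe, near each vertex of $\Ups_\mu$, exactly which angular sector of nearby trajectories flows away from which incident edge, and hence pin down the jumps of $h$.
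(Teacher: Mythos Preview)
Your approach is correct in outline but differs from the paper's, and the step you flag as hard is exactly where the paper's argument is cleaner. The paper does not prove injectivity via the gradient flow. After noting (as you do) that the multivaluedness of $\Psi_{++}$ on $\Omega^*$ is precisely $2\pi i\bZ$, it passes to the single-valued map $\psi(z)=\exp(-\Psi_{++}(z))$, which is holomorphic and nonvanishing on $\Omega^*$, extends to $\infty$ with a simple zero there (since $\Psi_{++}(z)\sim\log z$), and sends $\Omega^*\cup\{\infty\}$ properly onto the bounded domain $D=\{\zeta:\log|\zeta|<-h(-\arg\zeta)\}$. A proper holomorphic map with exactly one simple zero has degree one, so $\psi$ is a bijection and $\Psi_{++}^{-1}(w)=\psi^{-1}(e^{-w})$ is single-valued. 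This packages the global topology into one stroke of complex analysis.

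Your flow argument has a genuine gap at ``any two points of $\Omega^*$ with the same value of $\Psi_{++}\bmod 2\pi i$ must share a trajectory'': equality of $\Im\Psi_{++}\bmod 2\pi$ only places the two points on the same level set, and you have not excluded that this level set has several trajectory components. You can close this using the asymptotics $\Psi_{++}(z)\sim\log z$ near $\infty$ (distinct trajectories are asymptotic to distinct rays, hence carry distinct values of $\arg z\bmod 2\pi$), but the paper's exponential trick encodes this same fact as ``simple zero at $\infty$'' and dispatches global bijectivity in one line. Note also that the paper extracts more than piecewise continuity of $h$: from Proposition~\ref{prop:support}, together with the observation that $\chi\equiv 1$ on the relevant one-sided neighbourhood while $\Re\chi\le 1$ everywhere, edges of $\frak S_\mu$ map to non-horizontal straight segments and exceptional trajectories to horizontal ones, so $h$ is in fact piecewise linear.
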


\begin{proof}
We have already noticed that $\Psi_{++}(z)$ is defined up to a
multiple of $2\pi i$ in $\Omega^*$ and that
$\Psi_{++}'(z)=\C_{\mu}(z)$.  If $\gamma$ is a small curve segment of
$supp(\mu)$ and if $U$ is a small one sided neighbourhood of
$\gamma$, then Proposition \ref{prop:support} states that $\gamma$
is mapped to a straight line segment by $\Psi_{++}$ which is not
horizontal and that $U$ is mapped to the right of this straight
segment. The latter follows if we observe that in the proof of
Proposition \ref{prop:support} we have $\chi=1$ in $U$ and
$\Re(\chi)\leq1$ everywhere. If $\gamma$ is a part of an exceptional
trajectory then it is mapped by $\Psi_{++}$ to a straight horizontal
line segment since exceptional trajectories are level curves of
$\Im(\Psi_{++})$. Continuing $\Psi_{++}$ around $\Ups_{\mu}$ we
obtain a broken piecewise linear curve of the form
$\{\Re(w)=h(\Im(w))\}$ bounding a domain $H$ of the form
$\{\Re(w)>h(\Im(w))\}$ where $h$ is a piecewise continuous function.
It is clear that $\Psi_{++}$ maps $\Omega$ onto $H$ with boundary to
boundary. The function $\psi(z)=\exp(-\Psi_{++}(z))$ is single-valued on $\Omega^*$ and maps $\Omega^*\cup\{\infty\}$ to $D=\{\zeta
\ : \ \log|\zeta|<-h(-arg\zeta)\}$, does not vanish in $\Omega^*$
and has a simple zero at infinity. It follows that $\psi$ is
bijective on $\Omega^*\cup\{\infty\}\rightarrow D$ and hence
$\Psi_{++}^{-1}(w)=\psi^{-1}(e^{-w})$ is single-valued. This
concludes the proof of the lemma.
       \end{proof}
       
       \begin{center}
\begin{picture}(440,230)(0,0)

\qbezier(60,160)(40,170)(10,140)
\put(10,140){\circle*{3}}
\put(10,130){a}
\qbezier(60,160)(80,170)(90,200)
\put(90,200){\circle*{3}}
\put(90,190){b}
\qbezier(60,160)(80,140)(90,110)
\put(90,110){\circle*{3}}
\put(90,100){c}

\put(57,164){M}
\put(60,160){\circle*{7}}

\qbezier(110,175)(125,160)(140,155)
\put(140,155){\circle*{3}}
\put(142,155){d}
\put(110,175){\circle*{5}}
\put(110,183){$\alpha$}

\put (63, 202){\vector(1,-1){26}}
\put(10,205){exceptional}
\put(10,195) {trajectory}

\put (55, 122){\vector(3,1){26}}
\put (55, 122){\vector(3,2){65}}
\put(10,112){measure's}
\put(10,102) {support}

\put(68,172){O}
\thicklines
\qbezier[10](110,175)(100,180)(74,170)

\thinlines

\put (180, 50){\vector(1,0){180}}
\put (180, 50){\vector(0,1){180}}
\put(360,40){Re(w)}
\put(182,230){Im(w)}

\thicklines
\put(240,70){\circle*{3}}
\put(238,73){c}
\put (240, 70){\line(-2,1){40}}
\put(200,90){\circle*{7}}
\put(197,95){M}
\put (200, 90){\line(2,1){20}}
\put(220,100){O}
\qbezier[10](220,100)(240,100)(260,100)
\put(260,100){\circle*{5}}
\put(258,105){$\alpha$}
\put(260,100){\line(2,1){30}}
\put(290,115){\circle*{3}}
\put(288,120){d}
\put(290,115){\line(-2,1){30}}
\put(260,130){\circle*{5}}
\put(258,135){$\alpha$}
\put(220,135){O}
\qbezier[10](220,130)(240,130)(260,130)
\put(220,130){\line(2,1){20}}
\put(240,140){\circle*{3}}
\put(238,145){b}
\put(240,140){\line(-2,1){40}}
\put(200,160){\circle*{7}}
\put(197,165){M}
\put(200,160){\line(2,1){30}}
\put(230,175){\circle*{3}}
\put(228,180){a}
\put(230,175){\line(-2,1){30}}
\put(200,190){\circle*{7}}
\put(197,195){M}
\put(200,190){\line(2,1){40}}
\put(240,210){\circle*{3}}
\put(238,215){c}

\thinlines
\put (240, 70){\line(1,0){85}}
\put (230, 205){\line(1,0){95}}
\put (310, 150){\vector(0,1){55}}
\put (310, 130){\vector(0,-1){60}}
\put(310,137){$2\pi$}

\thinlines{
\put (210, 85){\line(1,0){115}}
\put (260, 100){\line(1,0){65}}
\put (290, 115){\line(1,0){35}}
\put (260, 130){\line(1,0){65}}
\put (230, 145){\line(1,0){95}}
\put (200, 160){\line(1,0){125}}
\put (230, 175){\line(1,0){95}}
\put (200, 190){\line(1,0){125}}}

\put(10,10){Figure 3. Extended support $\Ups_\mu$ of a positive measure associated with a rational}  \put(10,-2){function  $\frac{z-\al}{(z-a)(z-b)(z-c)(z-d)}$ and its image under the mapping $\Psi_{++}(z)$.}
\end{picture}
\end{center}

\medskip
\noindent {\em Explanation to Figure~3.} The left picture shows the support of a measure $\mu$ consisting of 2 components - one connecting three   poles $a,b,c$ and the other connecting the only zero $\alpha$ with the remaining pole $d$. The exceptional trajectory connects the point $O$  on the first component with the zero $\alpha$ so that the extended support $\Ups_\mu$ forms a tree. The map $\Psi_{++}(z)$ wraps out the complement $\bC\setminus \Ups_\mu$ onto the strip bounded by two horizontal lines and the piecewise linear curve shown on the right picture. This map has a period $2\pi\sqrt{-1}$. We go around  the extended support  in the anti-clockwise direction starting from the pole $c$.  The piecewise linear curve on the right picture  shows the images of the special points we meet while doing this: 
$c, M, O, \alpha, d, \alpha, O, b, M, a, M$ and returning back to $c$ but  with the imaginary part increased by $2\pi$. Each time we meet one of the poles or the point $M$ we have to change our direction by $\pm 120^o$.

     We can finally  prove the  uniqueness of the required measure. Indeed,  assume that there are two different probability measures $\mu_1$ and $\mu_2$ whose Cauchy transforms solve (\ref{eq:main}) almost everywhere and let  $u_1(z)$ and $u_2(z)$ be their logarithmic potentials. Notice that  there is only one branch of   ${\root {k} \of{ \frac {{V(z)}}{Q(z)}}}$  which has $\frac{1}{z}$  as its asymptotics near $\infty$ in $\bC P^1$. Therefore the Cauchy transforms and logarithmic potentials of $\mu_1$ and $\mu_2$ have to coincide in some neighborhood of $\infty$.  We will show that $u_2(z)\ge u_1(z)$ in $\Omega_1^*=\bC\setminus\Ups_{\mu,1}$ and $u_1(z)\ge u_2(z)$ in $\Omega_2^*=\bC\setminus\Ups_{\mu,2}$. Indeed,  $u_1(\Psi^{-1}_{++}(w))=\Re(w)$ for all $w\in H_1$ and
     $u_2(\Psi^{-1}_{++}(w))=\Re(w)$ for all sufficiently large $\Re(w)$. On the other hand, $u_2\circ \Psi^{-1}_{++}$ is piecewise linear and convex on any ray $\Im(w)=const$ in $H_1$. Therefore,  $u_2(\Psi^{-1}_{++}(w))\geq\Re(w)$ for all $w\in H_1$. Changing place of $u_1(z)$ and $u_2(z)$ we get the second inequality. Since $u_1(z)$ and $u_2(z)$ are continuous in the whole $\bC$ they should coincide. But the measures $\mu_1$ and $\mu_2$ are obtained as $\Delta u_1(z)$ and $\Delta u_2(z)$, therefore they coincide as well.  \qed

We finish this section with some information about connected components of $\frak S_\mu$. 

        \begin{lemma}\label{lm:concomp}  Each connected component of   $\frak S_\mu$ contains either some number of roots of $Q(z)$ and equally many  roots of $V(z)$ or it contains $k+j$ roots of $Q(z)$ and $j$ roots of $V(z)$ for some $j=0,..., r$. (We count roots with multiplicities here.) 
        \end{lemma}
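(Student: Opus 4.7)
The plan is to extract arithmetic constraints on $n_V(K_j)$ and $n_Q(K_j)$ from the single-valuedness of $\C_\mu$ on $\Omega=\bC\setminus\frak S_\mu$, combine them with the total degrees of $V$ and $Q$, and then use the gradient-flow picture for $\overline{\C_\mu}$ to pin down the correct sign.

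First I would verify that every zero of $V$ or $Q$ of multiplicity strictly less than $k$ must actually lie on $\frak S_\mu$. Otherwise, near such a zero off the support, the equation $\C_\mu^k=V/Q$ would force $\C_\mu$ to have a zero or pole of non-integer order, impossible for a single-valued holomorphic function on $\Omega$. Under the hypothesis in force throughout this section (that all roots of $Q$ have multiplicity below $k$), this gives
$$\sum_j n_V(K_j)=\deg V=r,\qquad \sum_j n_Q(K_j)=\deg Q=r+k.$$

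Next I would apply the argument principle to $\C_\mu$ along a small positively-oriented Jordan curve $\gamma_j\subset\Omega$ that encircles only $K_j$. Since $\C_\mu$ is single-valued and nowhere zero on $\gamma_j$, the integer
$$N_j:=\frac{1}{2\pi i}\oint_{\gamma_j}\frac{\C_\mu'(z)}{\C_\mu(z)}\,dz$$
is the winding number of $\C_\mu\circ\gamma_j$ around $0$. Differentiating the relation $\C_\mu^k=V/Q$ logarithmically gives $k\,\C_\mu'/\C_\mu=V'/V-Q'/Q$, so $N_j=(n_V(K_j)-n_Q(K_j))/k$. Setting $m_j:=-N_j$, we obtain integers $m_j\in\bZ$ satisfying $n_Q(K_j)-n_V(K_j)=k\,m_j$; summing over components and invoking the totals above yields $\sum_j m_j=1$.

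The crucial remaining step, and the one I expect to be the main obstacle, is the sign inequality $m_j\ge 0$. Once established, combining $m_j\in\bZ_{\ge 0}$ with $\sum_j m_j=1$ forces exactly one component $K_{j_0}$ to have $m_{j_0}=1$, hence $n_V(K_{j_0})=j$ and $n_Q(K_{j_0})=k+j$ for some $j\in\{0,\ldots,r\}$, while every other component satisfies $m_j=0$ and thus $n_V(K_j)=n_Q(K_j)$, which is exactly the dichotomy asserted by the lemma.

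To prove $m_j\ge 0$ I would interpret $m_j$ as a Poincar\'e--Hopf type index for the tree-shaped singular set $K_j$ of the gradient field $\overline{\C_\mu}=\nabla u$, where $u$ is the logarithmic potential of $\mu$. By Lemma \ref{lm:framback} every orbit of this gradient field flows outward from $\frak S_\mu$ to $\infty$, and since $u$ is harmonic on $\Omega$ with no critical points there, the field $\overline{\C_\mu}$ on a sufficiently small $\gamma_j$ is nowhere vanishing and outward-directed. The winding of an outward-pointing gradient field along a loop enclosing its singular set is non-negative, and the local contributions at the two kinds of vertices of $K_j$ (zeros of $V$, where $\overline{\C_\mu}$ vanishes, and zeros of $Q$, where it blows up) can be read off from the canonical coordinate $\Psi_{++}$ via the local models already computed in Proposition \ref{pr:1} and in the proof of Lemma \ref{lm:1traj}, where $K_j$ appears as a straight-line forest and the angular structure of trajectories is explicit. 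The technical subtlety lies in correctly tallying these local contributions at every branch vertex of $K_j$ and confirming that they never push $m_j$ below zero; once this Poincar\'e--Hopf bookkeeping is carried out, the lemma follows.
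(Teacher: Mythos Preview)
Your argument-principle computation in step~2 is precisely the paper's proof, rephrased: the paper observes that going once around a component $K_j$ must induce trivial monodromy on the branches of $\sqrt[k]{V/Q}$ (since $\C_\mu$ is a single-valued branch in $\Omega$), and that each root of $V$ resp.\ $Q$ contributes a cyclic shift of $\pm 2\pi/k$. That is exactly the statement $k\mid n_Q(K_j)-n_V(K_j)$, i.e.\ your integers $m_j$ exist. The paper's proof is literally this one sentence and stops there.

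In particular, the paper does \emph{not} prove the sign inequality $m_j\ge 0$, nor does it invoke $\sum m_j=1$; its argument yields only the congruence $n_Q(K_j)-n_V(K_j)\equiv 0\pmod k$, which is strictly weaker than the dichotomy the lemma literally asserts. So your proposal is more ambitious than the paper's own proof and attempts to close a gap the authors left open.

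That said, your sketch of step~3 is not yet a proof. Lemma~\ref{lm:framback} says backward trajectories of $\overline{\C_\mu}$ land on $\frak S_\mu$, but it does not say the field is outward-pointing at every point of a small loop $\gamma_j$ around $K_j$; and if it were genuinely outward on all of $\gamma_j$, the index would equal $+1$ for every component, forcing $\sum m_j$ to equal the number of components rather than $1$. So the index bookkeeping must be subtler than ``outward field $\Rightarrow$ nonnegative index,'' and the local models near branch vertices that you allude to will have to do real work. A cleaner route, if you pursue this, is probably through the map $\Psi_{++}$ and the half-plane picture of Lemma~7: the monotone increase of $\Im\Psi_{++}$ by $2\pi\mu(K_j)>0$ as one traverses the boundary image of $K_j$ encodes the sign information more directly than a raw Poincar\'e--Hopf count.
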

        \begin{proof}ÊGoing around such a component should result in a trivial monodromy of the branches of
        ${\root {k}  \of {\frac {V(z)}{Q(z)}}}$.  Notice that these branches are cyclicly ordered and a small clockwise oriented loop around a root of $V(z)$ results in the cyclic clockwise shift by $\frac{2\pi}{k}$ while a small clockwise oriented loop around a root of $Q(z)$ results in the cyclic counterclockwise shift by $\frac{2\pi}{k}$.
         \end{proof}

            \section {Proving Theorem~\ref{th:higRull}}
     \label{sec:proofs}

Our scheme follows roughly the scheme suggested in \cite {BR}.  We
need to prove  under its assumptions the sequence $\{\mu_{n,i_n}\}$
of root-counting measures of the Stieltjes polynomials
$\{S_{n,i_n}(z)\}$ converges weakly to a probability
       measure $\mu_{\dq,\tilde V}$ whose Cauchy transform $\C_{\dq,\widetilde V}(z)$ satisfies almost everywhere in $\bC$ the equation \eqref{eq:pow}. 

       To simplify the notation  we denote by  $\{\bar S_n(z)\}$  the chosen sequence  $\{S_{n,i_n}(z)\}$ of  Stieltjes polynomials whose normalized Van Vleck polynomials $\{\widetilde V_{n,i_n}(z)\}$ converge to $\widetilde V(z)$ and let $\{\bar \mu_n\}$ denote the sequence of its root-counting measures. Also let $\bar \mu_n^{(i)}$ be the root measure of the $i$th derivative $\bar S^{(i)}_n(z)$. Assume now that $NN$ is a subsequence of natural numbers such that
       $$\bar \mu^{(j)}=\lim_{n\to \infty,n\in NN}\bar \mu_n^{(j)}$$
       exists for all $j=0,1,...,k$.
       The next lemma shows that   the Cauchy transform of $\bar \mu=\bar \mu^{(0)}$ satisfies the required algebraic equation.

       \begin{lemma}\label{lm:CauEq} The measures $\bar \mu^j$ are all equal and the Cauchy transform $\C(z)$ of their common limit satisfies the equation \eqref{eq:pow} for almost every $z$.
           \end{lemma}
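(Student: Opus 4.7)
The plan is to establish the asymptotic behaviour of the logarithmic derivatives
\[
h_n^{(j)}(z) \;:=\; \frac{\bar S_n^{(j)}(z)}{\bar S_n(z)}, \qquad j=1,\dots,k,
\]
on the complement of a common compact set $K\supset\operatorname{Conv}_{Q_k}^{\eps}$ which, by Theorem~\ref{th:local}, contains every $\operatorname{supp}(\bar\mu_n^{(j)})$, and then substitute into the defining ODE to read off the algebraic relation for $\C$. The base case $h_n^{(1)}/n=\C_{\bar\mu_n}\to\C$ locally uniformly on $\bC\setminus K$ is immediate from the weak convergence $\bar\mu_n\to\bar\mu$ together with the uniform root localization. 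For $j\ge 2$ I would argue by induction using the identity
\[
h_n^{(j)} \;=\; \frac{d}{dz}\bigl(h_n^{(j-1)}\bigr) \;+\; h_n^{(j-1)}\cdot h_n^{(1)},
\]
obtained from $(h_n^{(j-1)}\bar S_n)'=\bar S_n^{(j)}$ and the product rule.

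Dividing this identity by $n^j$ and invoking the inductive hypothesis, the first summand becomes $(1/n)\cdot(h_n^{(j-1)}/n^{j-1})'$ and tends to $0$ locally uniformly on $\bC\setminus K$ (derivatives of locally uniformly convergent sequences of holomorphic functions again converge locally uniformly), while the second tends to $\C^{j-1}\cdot\C=\C^j$. Hence $h_n^{(j)}/n^j\to\C^j$ locally uniformly on $\bC\setminus K$. Rewriting the defining equation as $\sum_{i=1}^k Q_i(z)\,h_n^{(i)}(z)+V_{n,i_n}(z)=0$, dividing by $n^k$, and passing to the limit yields
\[
Q_k(z)\,\C(z)^k \;+\; \Bigl(\lim_{n\in NN}\tfrac{c_n}{n^k}\Bigr)\,\widetilde V(z) \;=\; 0,
\]
where $c_n$ is the leading coefficient of $V_{n,i_n}=c_n\widetilde V_{n,i_n}$ and $\widetilde V_{n,i_n}\to\widetilde V$; every contribution with $i<k$ drops out since $h_n^{(i)}/n^k=O(n^{i-k})\to 0$. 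Comparing behaviour at infinity, $\C(z)^k\sim 1/z^k$ (because $\bar\mu$ is a probability measure) and $\widetilde V(z)/Q_k(z)\sim 1/z^k$ (both numerator and denominator monic with degree gap~$k$) force $\lim c_n/n^k=-1$, yielding $\C^k=\widetilde V/Q_k$ on $\bC\setminus K$ and therefore almost everywhere in~$\bC$.

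The equality $\bar\mu^{(j)}=\bar\mu$ for every $j$ then follows from
\[
\C_{\bar\mu_n^{(j)}}(z) \;=\; \frac{1}{n-j}\cdot\frac{h_n^{(j+1)}(z)}{h_n^{(j)}(z)} \;=\; \frac{n}{n-j}\cdot\frac{h_n^{(j+1)}/n^{j+1}}{h_n^{(j)}/n^j} \;\longrightarrow\; \frac{\C^{j+1}}{\C^j}=\C(z),
\]
valid at every $z\in\bC\setminus K$ with $\C(z)\ne 0$; pointwise convergence of Cauchy transforms on such a set, combined with the uniform tightness of $\{\bar\mu_n^{(j)}\}$ (all supports sitting inside $K$), delivers the weak convergence $\bar\mu_n^{(j)}\to\bar\mu$. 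The main technical obstacle is the local uniformity in the inductive step; this rests on the a~priori normal-family bound $|h_n^{(j)}(z)/n^j|\le C_j\operatorname{dist}(z,K)^{-j}$, itself a consequence of expressing $h_n^{(j)}/n^j$ via Bell polynomials in the normalized Newton-power sums $(1/n)\sum_\ell(z-z_\ell)^{-m}$, where $z_\ell$ are the roots of $\bar S_n$.
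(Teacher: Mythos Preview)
Your argument takes a genuinely different route from the paper's. The paper works throughout with $L^1_{loc}$ (hence a.e.) convergence on all of $\bC$: it passes to a subsequence on which every ratio $\frac{\bar S_n^{(j+1)}}{(n-j)\bar S_n^{(j)}}=\C_{\bar\mu_n^{(j)}}$ converges pointwise a.e., reads off $u^{(k)}=u^{(0)}$ a.e.\ from the ODE, and then invokes a separate potential-theoretic comparison (Lemma~\ref{lm:logpot}: $u'\le u$ everywhere, with equality on the unbounded component of $\bC\setminus\text{supp }\mu$) to sandwich all the $u^{(j)}$ and force them to coincide. You instead work on the exterior $\bC\setminus K$ with locally uniform convergence, use the clean recursion $h_n^{(j)}=(h_n^{(j-1)})'+h_n^{(j-1)}h_n^{(1)}$ to get $h_n^{(j)}/n^j\to\C^j$ there, and then recover $\bar\mu^{(j)}=\bar\mu$ from the fact that a compactly supported measure is determined by its Cauchy transform in a neighbourhood of $\infty$. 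This is more elementary in that it bypasses Lemma~\ref{lm:logpot} entirely.

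There is, however, one genuine gap. You obtain $\C^k=\widetilde V/Q_k$ only on $\bC\setminus K$, and the clause ``and therefore almost everywhere in~$\bC$'' is not justified: $K\supset\operatorname{Conv}_{Q_k}^{\eps}$ has positive Lebesgue measure, and analytic continuation of $\C$ only reaches the unbounded component of $\bC\setminus\text{supp }\bar\mu$; it says nothing about possible bounded complementary components or about a support of positive area. The lemma, and the uniqueness machinery of \S\ref{sec:rat} that follows it, require the equation a.e.\ on $\bC$. The fix is short once you already have $\bar\mu^{(j)}=\bar\mu$: weak convergence gives $\C_{\bar\mu_n^{(j)}}\to\C$ in $L^1_{loc}(\bC)$, hence on a further subsequence pointwise a.e.; the telescoping product $\prod_{j=0}^{k-1}\C_{\bar\mu_n^{(j)}}=\frac{\bar S_n^{(k)}}{n\cdots(n-k+1)\bar S_n}$ then converges to $\C^k$ a.e., and dividing the ODE by $n\cdots(n-k+1)\bar S_n$ (each lower-order term is $O(n^{l-k})$ times a finite product of a.e.-convergent Cauchy transforms, hence $\to 0$ a.e.) yields $Q_k\C^k=\widetilde V$ a.e.\ on $\bC$. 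So you end up needing one pass of the paper's $L^1_{loc}$ mechanism after all---but you have traded the subharmonic comparison lemma for the simpler fact that exterior Cauchy transforms determine the measure.
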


       \begin{proof}ÊWe have
       $$\frac {\bar S_n^{(j+1)}(z)}{(n-j)\bar S_n^{(j)}(z)}\to \int\frac {d\bar\mu^{(j)}(\zeta)}{z-\zeta}$$
       with convergence in $L^{1}_{loc}$, and by passing to a subsequence again we can assume that we have pointwise convergence almost everywhere. From the relation
       $$\dq \bar S_n(z)+V_n(z)\bar S_n(z)=0$$
        it follows that
       $$\frac{Q_k(z)\bar S^{(k)}_n(z)}{n...(n-k+1)\bar S_n(z)}+\frac{V_n(z)}{n...(n-k+1)}= -\sum_{l=0}^{k-1}
       \frac{Q_l(z)}{(n-l)...(n-k+1)}\left(\prod_{j=0}^{l-1}\frac{\bar S_n^{(j+1)}(z)}{(n-j)\bar S_n^{(j)}(z)}\right).$$

          One can immediately check that $-\frac{V_n(z)}{n(n-1)...(n-k+1)}\to \widetilde V(z)$, while the sum in the right-hand side converges pointwise to $0$ almost everywhere in $\bC$ due to presence of the factors $(n-l)...(n-k+1)$ in the denominators. Thus,  for almost all $z\in\bC$ one has
          $$ \frac{\bar S^{(k)}_n(z)}{n(n-1)...(n-k+1)\bar S_n(z)}\to \frac {\widetilde V(z)}{Q_k(z)}$$
          when $n\to\infty $ and $n\in NN$. If $u^{(j)}(z)$ denotes the logarithmic potential of $\bar \mu^{(j)}$, then
          one has
          $$
          u^{(k)}(z)-u^{(0)}(z)= \lim_{n\to\infty} \frac 1 n\log \left\vert \frac {\bar S^{(k)}_n(z)}{n(n-1)...(n-k+1)\bar S_n(z)}\right\vert=$$
         $$=\lim_{n\to\infty}\frac 1 n
          \left(\log\vert \widetilde V(z)\vert -\log \vert Q_k(z)\vert\right)=0.$$
          On the other hand we have that $u^{(0)}(z)\ge u^{(1)}(z)\ge...\ge u^{(k)}(z)$, see Lemma~\ref{lm:logpot} below. Hence all the potentials $u^{(j)}(z)$ are equal, and all $\mu_j=\Delta u^{(j)}/2\pi$ are equal as well. Finally we get
          $$C^{k}(z)=\lim_{n\to\infty}\Pi_{j=0}^{k-1}\frac{\bar S_n^{(j+1)}(z)}{(n-j)\bar S_n^{(j)}(z)}=\lim_{n\to\infty}\frac {\bar S_n^{(k)}(z)}{n...(n-k+1)\bar S_n(z)}=\frac {\widetilde V(z)}{Q_k(z)}$$
          for almost all $z$. This completes the proof of the existence of a compactly supported probability measure whose  Cauchy transform satisfies (\ref{eq:pow}) in the  case  when $\widetilde V(z)$ is the limit of a sequence of normalized Van Vleck polynomials of a Heine-Stieltjes problem with  the leading coefficient of the operator equal to $Q_k(z)$. The uniqueness of such a measure was  obtained
          in \S~\ref{sec:rat}.

\begin{lemma} [\rm{see Lemma 8 of \cite{BR}}] \label{lm:logpot} Let $\{p_m(z)\}$ be a sequence of polynomials, such that $n_m:=\deg p_m\to \infty$ and there exists a compact set $K$ containing the zeros of all $p_m(z)$ simultaneously. Finally, let $\mu_m$ and $\mu'_m$ be the root-counting measures of $p_m(z)$ and $p_m'(z)$ resp. If $\mu_m\to\mu$ and $\mu_m'\to\mu'$ with compact support and $u(z)$ and $u'(z)$ are the logarithmic potentials of $\mu$ and $\mu'$, then $u'(z)\le u(z)$ in the whole $\bC$. Moreover,  $u(z)=u'(z)$ in the unbounded component of $\bC\setminus \text{supp } \mu$.
\end{lemma}

\begin{proof} Assume wlog that $p_m(z)$ are monic. Let $K$ be a compact  set containing the zeros of all $p_m(z)$.  We have
$$u(z)=\lim_{m\to\infty}\frac{1}{n_m}\log \vert p_m(z)\vert$$
and
$$u'(z)=\lim_{m\to\infty}\frac{1}{n_m-1}\log\left\vert\frac{p'_m(z)}{n_m}\right\vert=\lim_{m\to\infty}\frac{1}{n_m}\log\left\vert\frac{p'_m(z)}{n_m}\right\vert$$
with convergence in $L_{loc}^1$.  Hence
$$u'(z)-u(z)=\lim_{m\to\infty}\frac{1}{n_m}\log\left\vert\frac{p'_m(z)}{n_m p_m(z)}\right\vert=\lim_{m\to\infty}\frac{1}{n_m}\log\left\vert \int \frac{d\mu_m(\zeta)}{z-\zeta}\right\vert.$$Ê
Now, if $\phi(z)$ is a positive test function it follows that
$$\int\phi(z)(u'(z)-u(z))d\la(z)=\lim_{m\to\infty}\int\phi(z)\log\left\vert\int \frac{d\mu_m(\zeta)}{z-\zeta}\right\vert d\la(z)\le $$
$$\le \lim_{m\to\infty}\int\phi(z)\int \frac{d\mu_m(\zeta)}{\vert z-\zeta \vert} d\la(z)\le
\lim_{m\to\infty}\iint\frac{\phi(z)d\la(z)}{\vert z-\zeta\vert}
d\mu_m(\zeta)$$ where $\la$ denotes Lebesgue measure in the complex
plane. Since $\frac{1}{\vert z\vert}$ is locally integrable, the
function $\int \phi(z)\vert z-\zeta\vert^{-1}d\la(z)$ is continuous,
and hence bounded by a constant $M$ for all $z$ in $K$. Since
$\text{supp }\mu_m\in K$, the last expression in the above
inequality is bounded by $M/n_m$, hence the limit when $m\to\infty$
equals to $0$. This proves $u'(z)\le u(z)$.

In the complement to $\text{supp }\mu$, $u(z)$ is harmonic and $u'(z)$ is
subharmonic, hence $u'(z)-u(z)$ is a negative subharmonic function.
Moreover, in the complement of $K$, $p_n'(z)/(n_mp_m(z))$ converges
uniformly on compact sets to the Cauchy transform $\C_\mu(z)$ of
$\mu$. Since $\C_\mu(z)$ is a non-constant  holomorphic function in
the unbounded component of $\bC\setminus K$, then by the above
$u'(z)-u(z)=0$ there. By the maximum principle for subharmonic functions
it follows that $u'(z)-u(z)=0$ holds in the unbounded component of
$\bC\setminus \text{supp }\mu$ as well.
\end{proof}Ê

          To accomplish the proof of Theorem~\ref{th:higRull} we need to show that we have the convergence for the whole sequence and not just for some subsequence.  Assume now that the sequence $\bar \mu_n$ does not converge to $\bar \mu$. Then we can find a subsequence
           $NN'$ such that $\bar \mu_n$ stay away from some fixed neighborhood of $\bar \mu$ in the weak topology, for all $n\in NN'$. Again by compactness, we can find a subsequence $NN^{*}$ of $NN'$ such that all the limits for root measures for derivatives
           exist for $j=0,...,k$. But then $\bar \mu^{(0)}$ must coincide with $\bar \mu$ by the uniqueness and the latter lemma. We get a contradiction to the assumption that $\mu_n$ stay away from $\bar \mu$ for all $n\in NN'$ and hence all $n\in NN$.
       \end{proof}

\medskip
\section {Final Remarks}
\label{sec:final}

As an observant reader easily notices the present paper leads to
many more questions than it provides  answers to,  some of those
already mentioned in the introduction. We use this circumstance as
an excuse for having this lengthy final section.

\medskip\noindent
 {\bf I.}   Call the  differential monomial
$Q_{k}(z)\frac{d^k}{dz^k}$ {\it the leading term}  of a differential
operator $\dq=\sum_{i=1}^k Q_i(z)\frac{d^i}{dz^i}$.
     Our first conjecture states  that asymptotically Van Vleck and Stieltjes
     polynomials of the equation (\ref{eq:1})
     behave very similarly to that of the equation
     \begin{equation}
 Q_{k}(z)\frac{d^kS(z)}{dz^k}+V(z)S(z)=0.
\label{eq:1term}
\end{equation}

 Let $Pol_r$ denote the space of all monic polynomials of degree $r$. Take a non-degenerate $\dq$. For some Van Vleck polynomial $V(z)$ of $\dq$ denote by $\widetilde V(z)$ its monic scalar multiple. For a given positive integer $n$ denote by $\{V_{n,i}(z)\}$ the set of all Van Vleck polynomials $V(z)$ whose Stieltjes polynomials have degree exactly $n$. (Each $V(z)$ is repeated as many times as its multiplicity prescribes, see \cite{BShN})  Notice that for sufficiently large $n$ the set $\{V_{n,i}(z)\}$ belongs to $Pol_r$, i.e. each Van Vleck has degree exactly $r$. Transform now the set $\{V_{n,i}(z)\}$ into a finite measure $\si_n(\dq)$ in $Pol_r$ by assigning to each element the finite mass equal to the inverse of the cardinality  of $\{V_{n,i}(z)\}$.

     \begin{conjecture}\label{th:spmeas} For any non-degenerate $\dq$
     \begin{itemize}

     \item
     the sequence $\si_n(\dq)$ converges weakly to a measure $\Si(\dq)$ compactly supported in $Pol_r$;

     \item the measure $\Si(\dq)$ depends only on the leading monomial $Q_{k}(z)\frac{d^k}{dz^k}$ of $\dq$.

     \end{itemize}
     \end{conjecture}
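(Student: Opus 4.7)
The plan is to combine the localization of Theorem~\ref{th:local}, the limit identification in Theorem~\ref{th:higRull}, and a Strebel-type cell decomposition of $Pol_r$ indexed by the combinatorial type of the extended support $\Ups_{\mu_{\dd,\widetilde V}}$ studied in Section~\ref{sec:rat}. Tightness of $\{\si_n(\dd)\}$ is automatic: by Theorem~\ref{th:local} every Van Vleck in $\si_n(\dd)$ is monic of degree $r$ with all roots in a fixed compact set $Conv_{Q_k}^{\eps}$, so $\{\si_n(\dd)\}$ is supported in a fixed compact subset of $Pol_r \cong \bC^r$. Hence every subsequence has a weakly convergent sub-subsequence, and the problem reduces to identifying the limit uniquely.

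For each $\widetilde V$ that can appear as a limit of normalized Van Vlecks, Theorems~\ref{th:higRull} and~\ref{th:charac} attach a probability measure $\mu_{\dd,\widetilde V}$ whose extended support $\Ups_{\mu_{\dd,\widetilde V}}$ is a tree. The topological type $\tau$ of this tree (which roots of $\widetilde V$ and $Q_k$ are joined by which edges, and in what cyclic order) partitions a generic open subset of $Pol_r$ into cells $\mathcal M_\tau$. On each cell I would introduce the ``period map''
\[
H_\tau:\mathcal M_\tau \longrightarrow \bR^{N_\tau}_{>0}, \qquad
\widetilde V \longmapsto \bigl(\mu_{\dd,\widetilde V}(K_1),\ldots,\mu_{\dd,\widetilde V}(K_{N_\tau})\bigr),
\]
recording the masses of the connected components $K_i$ of $\frak S_{\mu_{\dd,\widetilde V}}$, supplemented by enough additional real ``edge-length'' moduli to make up $2r$ real parameters in total. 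Imitating Strebel's analysis of meromorphic differentials with closed trajectories (\cite{Str}), one expects $H_\tau$ to be a real-analytic diffeomorphism onto an explicit convex polyhedral domain; the monodromy relation $\oint_{\partial K} \C_{\mu}\,dz = 2\pi i\, \mu(K)$ used throughout Section~\ref{sec:rat} is the key input for the inverse construction.

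The crucial observation is that a Stieltjes polynomial of degree $n$ places an \emph{integer} number of zeros on each connected component of $\frak S_{\mu_{\dd,\widetilde V}}$, so for a genuine Van Vleck in $\si_n(\dd)$ of type $\tau$ each period $\mu(K_i)$ must lie in $\frac{1}{n}\bZ$. Hence the Van Vlecks of type $\tau$ occurring in $\si_n(\dd)$ correspond, via $H_\tau$, to a subset of $\frac{1}{n}\bZ^{N_\tau}$ inside $H_\tau(\mathcal M_\tau)$. Standard equidistribution of integer lattices yields weak convergence of the corresponding empirical distribution to normalised Lebesgue measure on $H_\tau(\mathcal M_\tau)$, whose pushforward by $H_\tau^{-1}$ defines a measure $\Si_\tau$ on $\mathcal M_\tau$. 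Summing these contributions over the finite set of combinatorial types $\tau$, weighted by the asymptotic fraction $|\{V_{n,i}\text{ of type }\tau\}|/\binom{n+r}{n}$ (which must total one in view of the global count from \cite{BShN}), produces $\Si(\dd)$. Because the cells $\mathcal M_\tau$, the maps $H_\tau$, and the asymptotic weights are extracted entirely from the measures $\mu_{\dd,\widetilde V}$, which by Theorem~\ref{th:higRull} depend only on $Q_k$, the limit $\Si(\dd)$ likewise depends only on $Q_k$.

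The main obstacle, and the reason the statement is left as a conjecture, is justifying this picture rigorously near the cell boundaries. When $\widetilde V$ approaches a boundary stratum -- two zeros of $\mu_{\dd,\widetilde V}$ colliding, or the tree $\Ups_\mu$ degenerating into a coarser combinatorial type -- the period map $H_\tau$ blows up and the change of variables breaks down; one must show that only an asymptotically negligible mass of $\si_n(\dd)$ accumulates in any small neighborhood of such loci. Handling this requires a Deligne--Mumford-style compactification of the space of meromorphic $k$-differentials $\widetilde V(z)\,dz^k/Q_k(z)$ on $\bCP^1$ together with uniform asymptotics of $H_\tau^{-1}$ at the boundary. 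A precise enumeration of the combinatorial types of $\Ups_\mu$ together with exact counts of Van Vlecks per type -- a combinatorial problem which~\cite{HKSh} appears to pursue -- is a necessary prerequisite for this uniform control, and is where I expect most of the genuine work to lie.
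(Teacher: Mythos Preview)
The paper does not prove this statement; it is explicitly presented as a \emph{conjecture} in Section~\ref{sec:final} (Final Remarks), with no proof or even sketch offered. There is therefore nothing to compare your attempt against. Your proposal is best read not as a proof but as a heuristic programme, and indeed you acknowledge as much in your final paragraph.

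As a programme it is reasonable in spirit but contains a genuine gap well before the boundary issues you flag. The step ``a Stieltjes polynomial of degree $n$ places an \emph{integer} number of zeros on each connected component of $\frak S_{\mu_{\dd,\widetilde V}}$, so each period $\mu(K_i)$ must lie in $\frac{1}{n}\bZ$'' is not justified. The zeros of $S_{n,i_n}(z)$ do not lie on $\frak S_{\mu_{\dd,\widetilde V}}$; they only accumulate there as $n\to\infty$. For finite $n$ there is no canonical way to assign each zero to a component, and no reason the resulting counts (however defined) should reproduce the masses $\mu(K_i)$ exactly rather than only approximately. Without this, the correspondence between Van Vlecks of type $\tau$ and lattice points $\frac{1}{n}\bZ^{N_\tau}\cap H_\tau(\mathcal M_\tau)$ is not a bijection, and the equidistribution argument does not start. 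You would need, at minimum, a statement of the form ``for each lattice point there is exactly one Van Vleck nearby (up to $o(1/n)$ in $H_\tau$-coordinates)'', which is a quantitative inverse problem not addressed by anything in the paper. The vaguely specified ``additional edge-length moduli'' making up the remaining $2r-N_\tau$ real parameters face the same difficulty: nothing in the paper quantizes them for finite $n$.
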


     \begin{rema+} One can defined the natural 'projection' of the finite set of polynomials $\si_n(\dq)$  to the union of the zero loci $\mathfrak Z(\si_n(\dq))$ of these polynomials and then turn the latter set into a finite measure in the same way. Conjecture~\ref{th:spmeas} Êimplies that this set of finite measures converges to the standard measure supported inside $Conv_{Q_k}$, see two left pictures on Fig.~\ref{fig4}.
     \end{rema+}

\noindent {\bf II.}Ê Let us  present a quite surprising (at least to
us) observation about the asymptotic distribution of Van Vleck
versus Stieltjes polynomials obtained through numerical experiments.
We state it first  in the  simplest case $r=1$. Notice that for any
nondegenerate Lam\'e operator
$\dq=Q_k(z)\frac{d^{k-1}}{dz^{k-1}}+...$  of order $k$ with $r=1$
and  sufficently large $n$ there exist exactly $(n+1)$ Van Vleck
polynomials all having degree $1$, see Proposition~\ref{pr:exist}.
Denote by $\xi_{n}$ the root-counting measure for the union of all
zeros of these polynomials, and define
$\xi=\lim_{n\to\infty}\xi_{n}$ (if it exists). Assuming that $\xi$
exists which is strongly supported by our numerics we formulate the
following fact illustrated on Figure~4 below.

\medskip \noindent
  {\bf Observation 1.}Ê
     The support of $\xi$ has a very surprising topological and geometric resemblance with that for the limiting root counting measure of the sequence of  eigenpolynomials for the exactly solvable (i.e. $r=0$) operator $T=Q_k(z)\frac{d^{k}}{dz^{k}}$, see Theorem 3 of \cite{BR}. (Notice that in case $r=0$ eigenpolynomials are natural analogs of Heine-Stieltjes polynomials for such an operator.)


        \begin{figure}[!htb]
\centerline{\hbox{\epsfysize=3cm\epsfbox{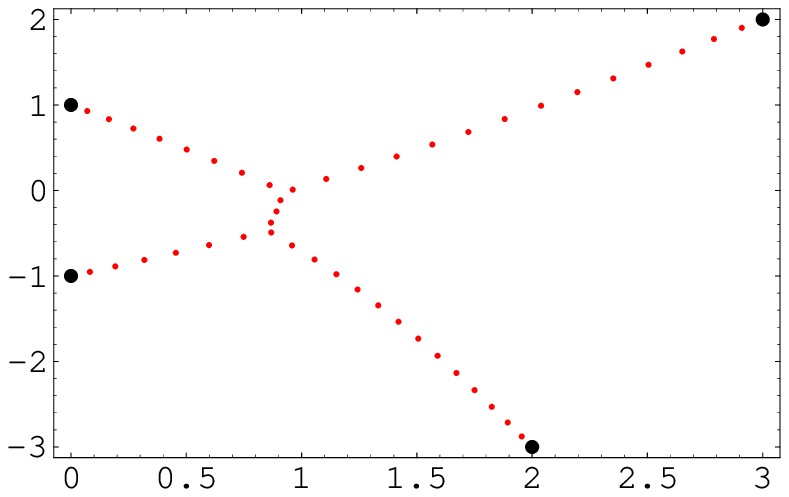}}
\hskip0.5cm\hbox{\epsfysize=3cm\epsfbox{picQ4d3V.eps}}}

\vskip1cm {Figure 4. The zeros of the eigenpolynomial of degree $40$
for the operator $Q(z)\frac{d^4}{dz^4}$ (left) and combined zeros of
the 40 linear Van Vlecks having a Stieltjes polynomial of degree
$39$ for the operator $\frac {d^3}{dz^3}(Q(z)S(z))+V(z)S(z)=0$\\ with
$Q(z)=(z^2+1)(z-3I-2)(z+2I-3)$, comp. Fig~\ref{fig1}. } \label{fig3}
\end{figure}

This resemblance seems to persist for larger $r$. Namely, consider
two higher Lam\'e operators $\dd_{1}$ and $\dd_{2}$ of the form
$$\dd_{1}=Q_{k}(z)\frac {d^l}{dz^l}+\ldots \text{ and }
\dd_{2}=Q_{k}(z)\frac{d^{l-1}}{dz^{l-1}}+\ldots,$$ where $\deg
Q_{k}(z)\ge k$ and $l\le k$. Let $\bar V_{n}(z)$ denote the product
of all Van Vleck polynomials having Stieltjes polynomials of degree
$n$ for $\dd_{2}$ and let $\bar S_{n}(z)$ denote the product of all
Stieltjes polynomials of degree $n$ for $\dd_{1}$. Let $\nu_{1}$ be
the asymptotic root-counting measure for the sequence $\{\bar
S_{n}(z)\}$ and $\nu_{2}$ be the asymptotic root-counting measure
for the sequence $\{\bar V_{n}(z)\}$ (if they exist).

\medskip \noindent
{\bf Observation 2.}Ê
 In the above notation the supports of $\nu_{1}$
and $\nu_{2}$ have surprising geometric and topological
similarities, see Figure~5.

\begin{figure}[!htb]
\centerline{\hbox{\epsfysize=3cm\epsfbox{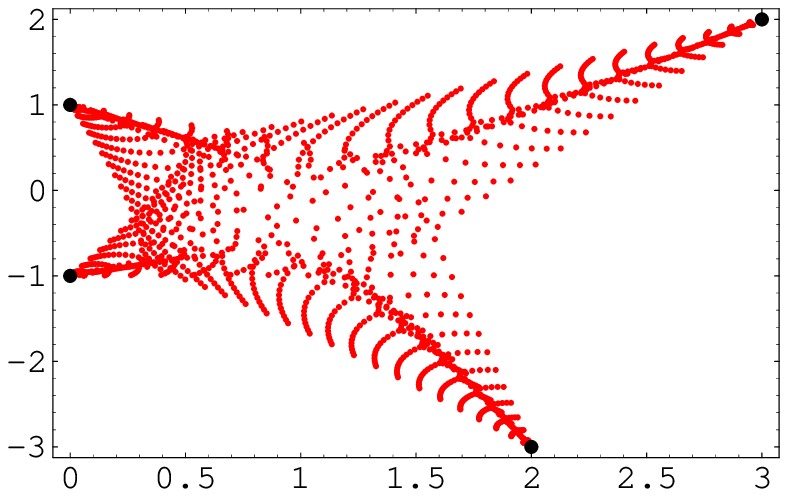}}
\hskip0.5cm\hbox{\epsfysize=3cm\epsfbox{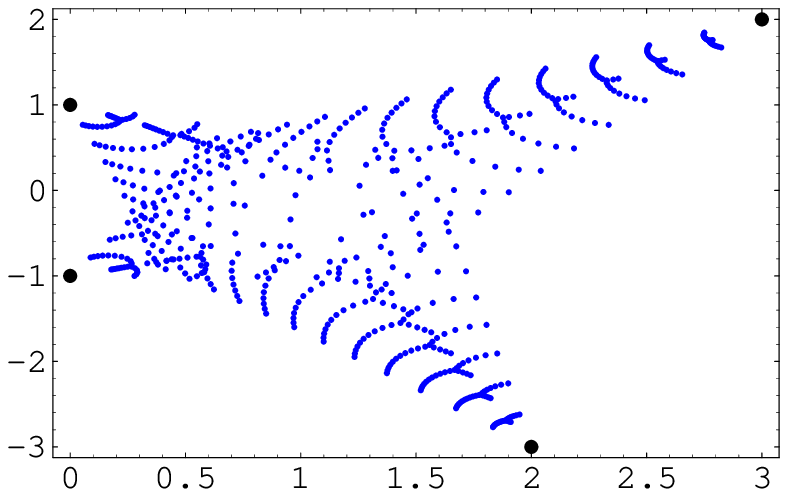}}}
\centerline{\hbox{\epsfysize=3cm\epsfbox{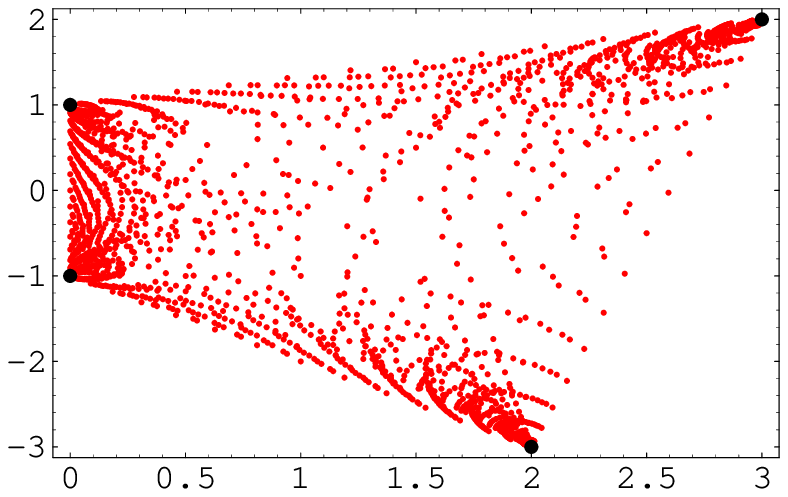}}
\hskip0.5cm\hbox{\epsfysize=3cm\epsfbox{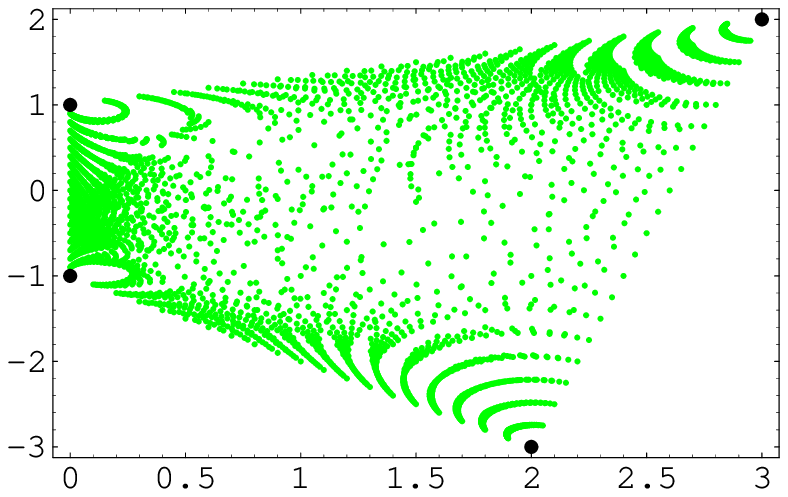}}} 
\vskip 0.5cm{Figure 5. Combined zeros of the Stieltjes  polynomials for
$Q(z)\frac{d^3}{dz^3}$ and $Q(z)\frac{d^2}{dz^2}$ (two left  pics) and combined zeros of the Van Vleck
 polynomials\\  for $Q(z)\frac{d^2}{dz^2}$ and $Q(z)\frac{d}{dz}$ with $Q(z)=(z^2+1)(z-3I-2)(z+2I-3)$\\ as described in  Observation 2 above. Notice the resemblance of shapes!}
\label{fig4}
\end{figure}

\medskip \noindent
\begin{Prob} Find an explanation for the above observations.
\end{Prob}

\medskip\noindent
{\bf III.}Ê Curves similar to that in the support of
$\mu_{\dq,\tilde V}$ appeared earlier as Stokes lines of linear
differential equations with polynomial coefficients.

\begin{Prob}   What is the exact relation between the
WKB-theory for the  equation
\begin{equation}\label{eq:Stokes}
y^{(k)}+\frac{\tilde V(z)}{Q(z)}y=0
\end{equation}Ê
 and the asymptotic root counting measure $\mu_{\dq,\tilde V}$Êof the sequence of Stieltjes polynomials with the sequence of normalized Van Vleck polynomials converging to $\tilde V$?
 \end{Prob}

 In the case of a generalized Lam\'e equation (\ref{eq:genLame}) one at least has a strict definition of
 the global Stokes line for such an equation, see e.g. \cite {Fe}Ê and the support of  the corresponding $\mu_{\dq,\tilde V}$ coincides with a part of this global Stokes line. Even in this classical case there is  an interesting problem of defining what part of the global Stokes line is covered by this support.

 For the case of equations of order exceeding $2$ the situation is  much worse since even a good definition  of the global Stokes line creates serious problems, see e.g. \cite {AKT1}, \cite {AKT2}, references therein and further publications of the same authors.

\medskip\noindent
{\bf IV.}Ê  Next question generalizes Problem 3 from the
introduction.

\begin{Prob} For what plane algebraic curves in $\bC^2$ with the given coordinate system $(z,w)$ there exists a compactly supported probability measure whose Cauchy transform is a section of this algebraic curve almost everywhere in $\bC$?
\end{Prob}

At the moment the authors know of several such classes of algebraic
curves related to eigenpolynomials of linear ode, but it is
completely unclear how to describe  the whole set of such curves.

\medskip\noindent
{\bf V.} In  \cite {Ber}Ê  T.~Bergkvist  obtained a number of
interesting results and conjectures in the case of degenerate
exactly solvable operators, i.e $\deg Q_k< k+r$, see Introduction.
Motivated by her resultsÊ we formulate the following conjecture and
a question.

    \begin{conjecture}   For any degenerate Lam\'e operator and any positive integer $N_0$ the  union of all the roots to polynomials $V$ and $S$ taken over $\deg S\ge N_0$ is always unbounded. Therefore, this property is a key distinction between non-degenerate and degenerate Lam\'e operators.
   \end{conjecture}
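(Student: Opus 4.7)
The plan is to argue by contradiction, rerunning the limit procedure of Lemma~\ref{lm:CauEq} and exploiting the fact that in the degenerate case the term that normally produces the relation $\CC_\mu^k=\widetilde V/Q_k$ is forced to vanish outright. Suppose, for contradiction, that the union $U_{N_0}$ of all roots of $V$ and of $S$ over all Van Vleck--Stieltjes pairs with $\deg S\ge N_0$ is contained in a closed disk $D$. Pick any sequence of such pairs $(V_n,S_n)$ with $\deg S_n=n\to\infty$. By Gauss--Lucas the zeros of every derivative $S_n^{(j)}$ also lie in $D$, and the coefficients of the monic multiple $\widetilde V_n$ of $V_n$ are bounded. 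After passing to a subsequence we may therefore assume that the root-counting measures $\mu_n^{(j)}$ of $S_n^{(j)}$ converge weakly to probability measures $\mu^{(j)}$ supported in $D$ for each $j=0,\dots,k$, and that $\widetilde V_n\to\widetilde V$ uniformly on compacts for some monic polynomial $\widetilde V$ of degree at most $r$.

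The key new input is the leading-coefficient balance in $\dq S_n+V_nS_n=0$. Writing $v_n$ for the leading coefficient of $V_n$ and matching coefficients of $z^{n+r}$ gives
\[v_n=-\sum_{i\,:\,\deg Q_i=i+r} q_i^{(i+r)}\,n(n-1)\cdots(n-i+1).\]
Degeneracy ($\deg Q_k<k+r$) removes the $i=k$ contribution; since however $r=\max_i(\deg Q_i-i)$ must be attained by some index $i^*<k$, this sum is a polynomial in $n$ of degree exactly $i^*<k$, so $v_n=O(n^{i^*})=o(n^k)$. In particular
\[\frac{V_n(z)}{(n)_k}=\frac{v_n}{(n)_k}\,\widetilde V_n(z)\longrightarrow 0\]
pointwise, where $(n)_k:=n(n-1)\cdots(n-k+1)$.

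Now divide $\dq S_n+V_nS_n=0$ by $(n)_k S_n$ and rearrange exactly as in the proof of Lemma~\ref{lm:CauEq}:
\[\frac{Q_k(z)S_n^{(k)}(z)}{(n)_kS_n(z)}+\frac{V_n(z)}{(n)_k}=-\sum_{l=0}^{k-1}\frac{Q_l(z)}{(n-l)\cdots(n-k+1)}\prod_{j=0}^{l-1}\frac{S_n^{(j+1)}(z)}{(n-j)S_n^{(j)}(z)}.\]
For $z\in\bC\setminus D$ each ratio $S_n^{(j+1)}/((n-j)S_n^{(j)})$ converges to $\CC_{\mu^{(j)}}(z)$; every right-hand term carries a factor $O(n^{l-k})$ with $l<k$ and vanishes; the second left-hand term vanishes by the preceding paragraph; and the first left-hand term converges to $Q_k(z)\prod_{j=0}^{k-1}\CC_{\mu^{(j)}}(z)$. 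By Lemma~\ref{lm:logpot} applied $k$ times (supports are all in $D$), the logarithmic potentials $u^{(0)},\dots,u^{(k)}$ coincide on $\bC\setminus D$, so all $\CC_{\mu^{(j)}}$ agree there with a common value $\CC_\mu$, and the limit identity collapses to $Q_k(z)\CC_\mu(z)^k\equiv 0$ on $\bC\setminus D$. Since $Q_k$ has only finitely many zeros and $\CC_\mu$ is analytic off $D$, this forces $\CC_\mu\equiv 0$ on the unbounded component of $\bC\setminus D$, contradicting $\CC_\mu(z)=1/z+O(1/|z|^2)$ at infinity (a consequence of $\mu$ being a probability measure).

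The main obstacle is not the limit procedure itself -- which goes through essentially verbatim from Lemma~\ref{lm:CauEq} -- but rather the need to guarantee that Van Vleck--Stieltjes pairs actually exist for every sufficiently large $n$ in the degenerate case. Proposition~\ref{pr:exist} is stated only in the non-degenerate setting, and in its absence the statement of the conjecture (and the argument above) becomes vacuous. For the exactly-solvable subcase $r=0$ both existence of pairs and unboundedness of their roots are established in \cite{Ber}, and one expects the argument outlined here to combine with a suitable existence statement to settle the general degenerate case.
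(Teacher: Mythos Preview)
This statement appears in the paper as an open \emph{conjecture} (Final Remarks, item~V); the paper offers no proof, only motivation from \cite{Ber}. There is therefore no paper proof to compare your proposal against.

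Assessed on its own merits, your argument is a correct \emph{conditional} proof. Granting that the degenerate operator admits Van Vleck--Stieltjes pairs $(V_n,S_n)$ with $\deg S_n\to\infty$, the leading-coefficient balance indeed gives $v_n=O(n^{i^*})$ with $i^*<k$, so $V_n/(n)_k\to 0$ pointwise on $\bC\setminus D$; dividing the basic identity by $(n)_kS_n$ and passing to the limit as in Lemma~\ref{lm:CauEq} then forces $Q_k(z)\prod_{j=0}^{k-1}\CC_{\mu^{(j)}}(z)\equiv 0$ on $\bC\setminus D$, which contradicts $\CC_{\mu^{(j)}}(z)\sim 1/z$ at infinity. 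One small simplification: you do not actually need Lemma~\ref{lm:logpot} here. Each $\CC_{\mu^{(j)}}$ is holomorphic on the connected set $\bC\setminus D$ and has the asymptotics $1/z$ at infinity, so none can vanish identically; since $Q_k$ has only finitely many zeros, the product relation already yields the contradiction without first identifying all the $\mu^{(j)}$.

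You have correctly isolated the genuine gap: existence of pairs with unbounded $\deg S$. Note that this is not a technicality that one can route around---if for some degenerate operator only finitely many degrees $n$ admit a pair $(V,S)$, the union of roots in the conjecture is a finite set, hence bounded, and the conjecture would be \emph{false} for that operator rather than vacuous. So an existence statement (analogous to Proposition~\ref{pr:exist} but in the degenerate regime, or at least along an infinite sequence of degrees) is part of the substantive content of the conjecture. Your outline would become a complete proof once such an existence result is supplied; the case $r=0$ handled in \cite{Ber} is consistent with this expectation.
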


   \begin{Prob} Extend the results of this paper to the case of degenerate higher Lam\'e operators.
    \end{Prob}

\medskip\noindent
{\bf VI.}  Numerical experiments suggest that Stieltjes polynomials
of consecutive degrees show a stable root-interlacing pattern along
the curves in $\frak S_\mu$ where $\mu$ is the corresponding
asymptotic root counting measure.  This phenomenon is especially
easy to explain and illustrate in the case of exactly solvable
operators, i.e. $r=0$.

\begin{conjecture}
For any exactly solvable operator $\dq$ the family
$\{S_{n}(z)\}_{n\in \bN, n\ge n_0}$ of its eigenpolynomials  ($\deg
S_{n}(z)=n$ for $n\in \bN$) has the interlacing property along the
support of its asymptotic root counting measure. In other words, the
zeros of any two consecutive polynomials $S_{n+1}(z)$ and $S_{n}(z)$
interlace along every curve segment in $\frak S_\mu$ for all
sufficiently high degrees $n$.
\end{conjecture}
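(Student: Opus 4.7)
The plan is to pass to canonical coordinates, carry out a semi-classical (WKB) analysis of the eigenpolynomials, and then compare the zero locations of $S_{n+1}(z)$ and $S_n(z)$ up to order $1/n^2$. First I would fix a non-singular arc $\gamma \subset \mathfrak S_\mu$, choose the local canonical coordinate $w=\Psi(z)=\int_{z_0}^z Q_k(t)^{-1/k}\,dt$ provided by Theorem~\ref{th:charac} (note that for exactly solvable $\dd$ with $r=0$ the spectral polynomial is a constant, so $\widetilde V\equiv 1$ and the equation $\mathcal C_\mu^k=1/Q_k$ simplifies the canonical map). By Proposition~\ref{pr:1} the arc $\gamma$ becomes a straight line segment in the $w$-plane, and the limit measure $\mu$ restricted to $\gamma$ is uniform in $w$ along this segment. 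Consequently the zeros of $\bar S_n(z)$ lying near $\gamma$ are, to leading order in $n$, uniformly spaced with spacing $\approx c/n$ in $w$-coordinate, where $c$ is the reciprocal of the mass density.

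Next I would sharpen this equidistribution to a one-term WKB ansatz
\[
S_n(z)\;\sim\;A(z)\,\exp\!\bigl(n\,\Psi(z)\bigr)\;+\;(\text{other }k-1\text{ branches}),
\]
where the amplitude $A(z)$ is determined by a transport equation obtained by plugging the ansatz into $\dd S_n+V_nS_n=0$ and collecting the subleading terms; this is standard for exactly solvable operators since $Q_k$ is the only coefficient contributing to the leading symbol and the operator already has the right homogeneity. Along $\gamma$ two conjugate branches dominate and their sum produces an oscillatory factor of the form $\cos\!\bigl(n\,\Im\Psi(z)+\varphi(z)\bigr)$ for a smooth phase correction $\varphi$. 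The zeros are then located (in canonical coordinates) at
\[
n\,\Im\Psi(z)+\varphi(z)\;\equiv\;\tfrac{\pi}{2}\pmod{\pi}.
\]
Passing from $n$ to $n+1$ translates the oscillatory argument by $\Im\Psi(z)$, which near a typical interior point of $\gamma$ is a non-constant linear function of the canonical coordinate; combined with the fact that the zero counts differ by exactly $1$ (there is one extra full period on the whole support as $n\to n+1$), a pigeonhole argument on each arc yields that between any two consecutive zeros of $S_n$ lies exactly one zero of $S_{n+1}$ for $n$ large.

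To make the WKB analysis globally consistent I would also verify the turning-point behaviour near the leaves of the forest $\mathfrak S_\mu$: at a root of $Q_k(z)$ the canonical coordinate has a fractional-power singularity, and the local model reduces to a generalized Airy equation for order $k$, whose connection formulae match two branches of the WKB form with the single dominant-branch form inside $Q_k^{-1}$'s neighbourhood. At internal vertices of the forest (points of $\mathfrak S_\mu$ where several arcs meet) I would use Lemma~\ref{lm:concomp} to control the local monodromy of the $k$-th root and hence the matching of phases coming from different arcs.

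The main obstacle will be precisely this WKB/connection-formula step for operators of order $k>2$: while for $k=2$ the argument collapses to classical Sturm oscillation on real intervals and for $k=1$ there is nothing to prove, rigorous global WKB for $k\ge 3$ along a general curvilinear Stokes graph is delicate — this is exactly the issue flagged in Problem~3 and the Stokes-line discussion of Section~\ref{sec:final}, and the relevant machinery (see \cite{AKT1}, \cite{AKT2}) is not yet complete in the generality needed. A secondary but serious difficulty is the uniformity of the WKB expansion near internal vertices of $\mathfrak S_\mu$, where the amplitude $A(z)$ could in principle vanish and thereby destroy the oscillatory picture; controlling the non-vanishing of $A$ along $\gamma$ requires an argument of its own, probably via the Riemann surface of the $k$-th root in $\mathcal C_\mu^k=1/Q_k$.
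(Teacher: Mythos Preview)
The statement you are attempting to prove is not a theorem in the paper but a \emph{conjecture}, listed among the open problems in Section~\ref{sec:final}. The paper offers no proof whatsoever: the only supporting evidence is the numerical Figure~6 and the remark that ``some results about interlacing can be found in \cite{ABM}.'' Consequently there is no ``paper's own proof'' to compare your proposal against.

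As for the proposal itself, it is a reasonable heuristic outline of how one might attack the conjecture, and you are honest about its status: you explicitly flag that the rigorous global WKB analysis for order $k\ge 3$ along a curvilinear Stokes graph is not available in the literature (your reference to \cite{AKT1}, \cite{AKT2} and the paper's own Problem~5), and that uniformity near the internal vertices of $\mathfrak S_\mu$ is unresolved. These are not minor technicalities but exactly the missing ingredients that keep the statement a conjecture rather than a theorem. In particular, the step where you pass from the leading-order equidistribution (which \emph{does} follow from Theorem~\ref{th:higRull}) to a one-term WKB ansatz with a controlled phase correction $\varphi(z)$, and then to the pigeonhole conclusion, is precisely where a proof would have to do real work; the transport equation for the amplitude $A(z)$ and the connection formulae at turning points have not been established in this generality. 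So your document is a plausible research programme, not a proof, and the paper agrees: it leaves the statement open.
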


Some caution is  required when defining the notion of interlacing
since a) the zeros of $S_{n}(z)$ do not lie exactly on $\frak S_\mu$
and b) $\frak S_\mu$ has a nontrivial topological structure. One has
to remove  sufficiently small neighborhoods of the singular points
of $\frak S_\mu$ where several smooth branches meet and to project
the zeros of  $S_{n}(z)$ onto $\frak S_\mu$ along some fixed in
advance normal bundle to the smooth part of $\frak S_\mu$.

An example illustrating the interlacing phenomenon conjectured above
is shown on Figure~6.

\begin{figure}[!htb]
\centerline{\hbox{\epsfysize=5.0cm\epsfbox{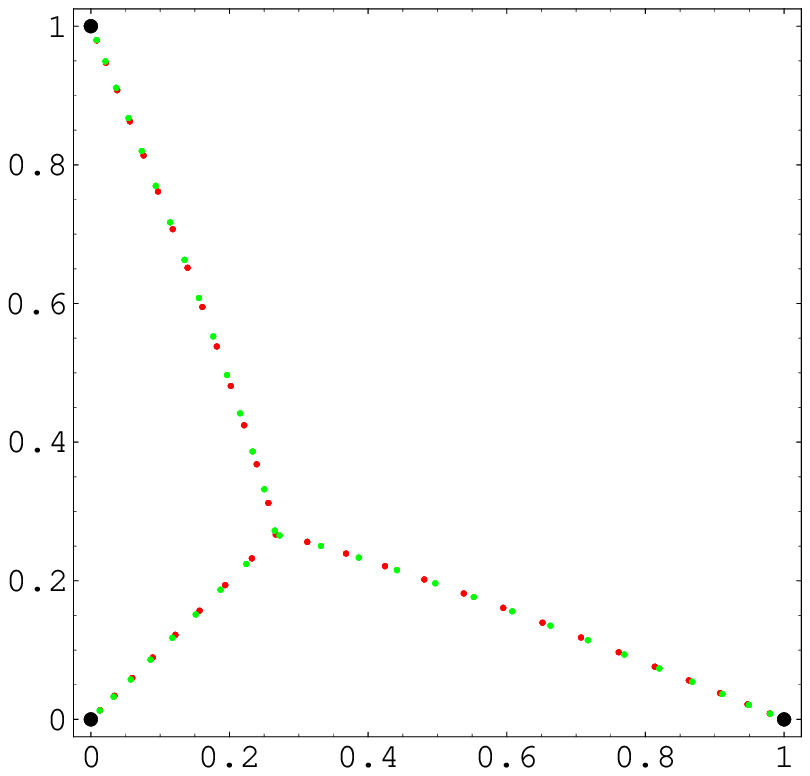}}}
\vskip0.5cm{Figure 6. Interlacing pattern for the roots of $S_{40}(z)$ and
$S_{41}(z)$
 which are the eigenpolynomials of the operator
$z(z-1)(z-I)\frac{d^3}{dz^3}$.} \label{fig5}
\end{figure}

  Similar interlacing property was observed for any family of Stieltjes polynomials of increasing degree as soon as the sequence of their normalized Van Vleck polynomials has a limit. Some results about interlacing can be found in \cite {ABM}.  

\medskip\noindent
{\bf VII.} In connection with the classical Bochner-Krall
problem which asks to describe all families of orthogonal
polynomials appearing as the families of eigenpolynomials for linear
differential operators with polynomial coefficients it is natural to
ask the following.

\begin{Prob}Ê
Describe all  Lam\'e operators for which all their   Van Vleck and
Stieltjes polynomials have only real roots.
\end{Prob}

\medskip\noindent
{\bf VIII.} Finally theorem~\ref{th:higRull}Ê can be reinterpreted as the statement that the level curves of the logarithmic potential $u_\mu(z)$ of the measure $\mu_{\tilde V}$ which we construct in this theorem are trajectories of the differential $({\sqrt{-1}})^{k}\frac{\widetilde V(z)}{Q_k(z)}dz^k$ of order $k$. Since almost all level curves of  $u_\mu(z)$  are closed curve we are tempted to call the latter differential Strebel. This makes perfect sense when $k=2$, see \cite {ShTaTa} and \cite{MFR}.  But for $k>2$ even a definition of a Strebel differential is so far missing. 

\begin{Prob} Define a notion of a Strebel (rational) differential of order $k>2$.  

\end{Prob}

\end{document}